\newcommand{\Z}{{\mathbb Z}}
\newcommand{\R}{\mathbb R}
\newcommand\veb{{\ve b}}
\newcommand\ved{{\ve d}}
\newcommand\veg{{\ve g}}
\newcommand\vex{{\ve x}}
\newcommand{\CDG}{{CDG(\mathcal{C}, \mathcal{C}')}}
\newcommand{\C}{{\mathcal{C}}}
\newcommand{\PP}{{BPP(\kappa^+, \kappa^-)}}
\newcommand{\eoproof}{\hspace*{\fill} $\square$ \vspace{5pt}}
\def\ve#1{\mathchoice{\mbox{\boldmath$\displaystyle\bf#1$}}
{\mbox{\boldmath$\textstyle\bf#1$}}
{\mbox{\boldmath$\scriptstyle\bf#1$}}
{\mbox{\boldmath$\scriptscriptstyle\bf#1$}}}
\title{Constructing Clustering Transformations}
\author{Steffen Borgwardt\inst{1}\and Charles Viss\inst{2}}
\institute{\email{\href{mailto:steffen.borgwardt@ucdenver.edu}{steffen.borgwardt@ucdenver.edu}};
University of Colorado Denver \and
\email{\href{mailto:charles.viss@ucdenver.edu}{charles.viss@ucdenver.edu}};
University of Colorado Denver 
}
\date{\today}
\begin{document}

\maketitle

\begin{abstract}
Clustering is one of the fundamental tasks in data analytics and machine learning. In many situations, different clusterings of the same data set become relevant. For example, different algorithms for the same clustering task may return dramatically different solutions. We are interested in applications in which one clustering has to be transformed into another; e.g., when a gradual transition from an old solution to a new one is required.

In this paper, we devise methods for constructing such a transition based on linear programming and network theory. We use a so-called clustering-difference graph to model the desired transformation and provide methods for decomposing the graph into a sequence of elementary moves that accomplishes the transformation. These moves are equivalent to the edge directions, or circuits, of the underlying partition polytopes. Therefore, in addition to a conceptually new metric for measuring the distance between clusterings, we provide new bounds on the circuit diameter of these partition polytopes.
\end{abstract}

\noindent {\bf{Keywords}:} partitioning, clustering, polyhedra, circuits, diameter, linear programming\\\\
{\bf{MSC}: 52B05, 90C05, 90C08, 90C27}

\section{Introduction and Preliminaries}\label{sec:introduction}
Clusterings of large data sets play an important role in data analytics, machine learning, and informed decision-making in general. In many applications, there exists a desired clustering corresponding to an optimal solution to an optimization problem. However, directly implementing such a solution can be challenging -- instead, a gradual sequence of transitions which transforms an initial, sub-optimal solution into the improved clustering is desired.

Consider the example in land consolidation from \cite{bbg-13,bbg-14,bbg-15}. In a Bavarian agricultural region, 471 lots are cultivated by 7 different farmers. The initial distribution of the ownership of the lots, depicted in Figure~\ref{fig:lots1}, is quite problematic -- the scattered, small lots result in large transportation overhead and prohibit the use of heavy machinery. To address this, the authors worked with the Bavarian State to facilitate a voluntary land exchange among the farmers in which the boundaries of the lots would remain the same while the cultivation rights for the lots would be redistributed.

The combinatorial redistribution of lots can be modeled as a clustering problem: a set of data points (the lots) must be partitioned into clusters (the farmers) under the restriction that each farmer keeps his original total value of land. This restriction makes it provably hard to determine an optimal redistribution of the lots, but it is possible to compute an approximation of the global optimum based on linear programming over projections of transportation polytopes \cite{bbg-14}. A computed solution in which the lots form large, connected sections of land is depicted in Figure~\ref{fig:lots2}. 

\begin{figure}[h]
\center
\begin{subfigure}[t]{0.45 \textwidth}
\center
\includegraphics[width=0.99\textwidth]{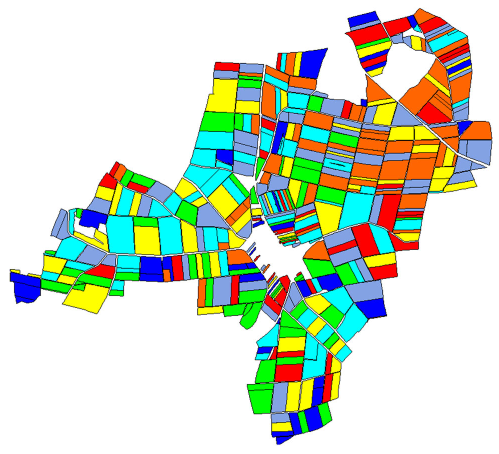}
\caption{Original clustering of lots.}\label{fig:lots1}
\end{subfigure}
\qquad
\begin{subfigure}[t]{0.45 \textwidth}
\center
\includegraphics[width=0.99\textwidth]{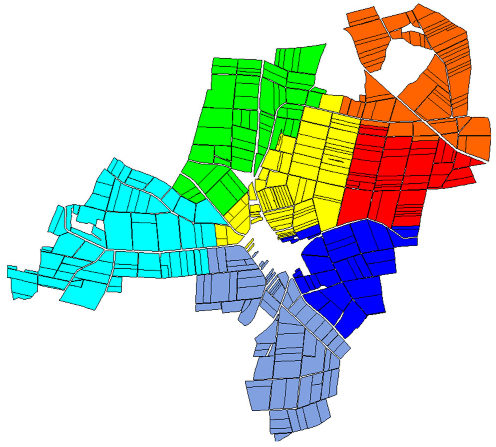}
\caption{Desired clustering of lots.}\label{fig:lots2}
\end{subfigure}
\caption{Two clusterings of 471 agricultural lots among 7 farmers (images from \cite{bbg-13,bbg-14,bbg-15}).}
\end{figure}

However, such a radical redistribution of lots among the farmers (more than 70\% of the lots change ownership from Figure~\ref{fig:lots1} to \ref{fig:lots2}) cannot realistically take place all at once. Crop rotations, required machinery, and other processes for farming stability must be respected. Therefore, the farmers requested a ``best" way to gradually implement the proposed changes over the course of several years. In this paper, we propose methods for constructing such a transformation between clusterings based on linear programming and network theory.

The need for the construction of an efficient gradual transition to a new, given clustering also arises in many other applications. For example, an insurance company may want to gradually transition their customers to a new clustering of premium classes. In other situations, there are snapshots of the same data set at different times and the goal is to devise a model that explains how the data gradually changed over time.

In general, we consider partitions of a data set $X := \{x_1,...,x_n\}$ into $k$ labeled clusters where each item $x_i$ is assigned to exactly one of the clusters. We call such a partition $\mathcal{C} := (C_1,...,C_k)$ a \textit{$k$-clustering} of $X$, or simply a \textit{clustering} of $X$ when $k$ is clear from context. As is the case in most clustering applications, we assume that the number of items is significantly greater than the number of clusters; i.e., $n \gg k$.

We additionally consider situations in which upper and lower bounds are given for the sizes of the clusters. Such bounds may arise directly from an application itself or may be introduced to guide clustering algorithms to return sufficiently balanced solutions. Specifically, let $\kappa^+, \kappa^- \in \Z_+^k$ with $\kappa^+ \geq \kappa^-$ be given. A \textit{bounded-size $k$-clustering} of $X$ with respect to $\kappa^+$ and $\kappa^-$ then satisfies $\kappa_i^- \leq |C_i| \leq \kappa_i^+$ for $i = 1,...,k$. This concept generalizes the \textit{fixed-size $k$-clusterings} from \cite{b-13} in which each cluster contains a fixed number of items (i.e., a bounded-size $k$-clustering with $\kappa^+ = \kappa^-$). The classical transportation problem and the assignment problem, along with their related polytopes, are well-studied topics in optimization corresponding to fixed-size clusterings \cite{br-74,dkos-09,kw-68}.

For a data set $X$ and cluster size bounds $\kappa^+, \kappa^- \in \Z_+^k$, the \textit{bounded-size partition polytope} $\PP$ models the set of all bounded-size $k$-clusterings of $X$ with respect to $\kappa^+$ and $\kappa^-$ \cite{bv-17}. Specifically, for $i=1,...,k$ and $j=1,...,n$, let $y_{ij}$ indicate whether or not cluster $C_i$ receives item $x_j$ in a $k$-clustering $\mathcal{C} = (C_1,...,C_k)$ of $X$. Then $BPP(\kappa^+, \kappa^-)$ is given by the following system of constraints:
\begin{align*}
\sum_{i=1}^k y_{ij} &= 1 \ \ \ \ \  \  j=1,...,n \\
\sum_{j=1}^n y_{ij} &\geq \kappa_i^-  \ \ \ \ i=1,...,k\\
\sum_{j=1}^n y_{ij} &\leq \kappa_i^+  \ \ \ \  i=1,...,k \\
y_{ij} & \geq 0 \ \ \ \ \  \ i=1,...,k, \  j=1,...,n.
\end{align*}
Since these constraints form a totally unimodular matrix, the right-hand side is integral, and each $y_{ij}$ is implicitly bounded between 0 and 1, $BPP(\kappa^+, \kappa^-)$ is in fact a 0/1-polytope whose vertices correspond to the feasible $k$-clusterings of $X$. For $\kappa^+ = \kappa^-$, this polytope generalizes the \textit{fixed-size partition polytope} from \cite{b-13}. It is also an instance of the bounded-shape partition polytope from \cite{bh-17} when $X$ is the standard basis of $\R^n$. 

In the fixed-size partition polytope, the edges also have a combinatorial interpretation: two vertices share an edge if and only if the corresponding clusterings differ by a single cyclical move of items among the clusters, formally defined in Section~\ref{sec:moves}. This fact can be used to prove new bounds on the \textit{combinatorial diameter} of the polytope -- the maximum length of a shortest edge walk between any pair of vertices -- and provide practical methods for constructing transformations between fixed-size clusterings \cite{b-13}. In this paper, we generalize these methods to the bounded-size partition polytope $BPP$. Although the edges of this polytope have a more technical characterization, its \textit{circuits} correspond to a set of natural cyclical and sequential moves of items among the clusters \cite{bv-17}. However, constructing transformations using these two types of moves is significantly more challenging than using only cyclical moves due to their different effects on the sizes of the underlying clusters. 

Circuits, introduced as the elementary vectors of a subspace by Rockafellar \cite{r-69}, play a fundamental role in the theory of linear programming. For a general polyhedron $P = \{ \vex \in \R^n \colon A \vex = \veb, B \vex \leq \ved \}$, the set of circuits of $P$ consists of all $\veg \in \ker(A) \setminus \{ \ve 0 \}$ normalized to coprime integer components for which $B \veg$ is support-minimal over $\{B \vex \colon \vex \in \ker(A) \setminus \{\ve0 \} \}$. Geometrically, circuits correspond to all potential edge directions of $P$ as the right-hand side vectors $\veb$ and $\ved$ vary. This implies that the set of circuits serves as a universal test set for any linear program over $P$ \cite{g-75}. Hence, circuits are used as the step directions in several augmentation algorithms for solving linear programs \cite{bv-18,dhl-15,hor-13,how-11}. 

Additionally, for polyhedra such as $BPP$ defined by totally unimodular matrices, circuits have combinatorial interpretations in terms of the underlying problem. The support-minimality of circuits implies that, in a sense, steps taken in circuit directions are as simple as possible while maintaining feasibility. In combination with highly-structured problems from combinatorial optimization, these steps become particularly intuitive and are guaranteed to only visit integral points \cite{bv-17}.

Therefore, in this paper, we choose the circuits of $BPP$ as the elementary moves for transforming $k$-clusterings. In building such transformations using these circuits, we construct \textit{circuit walks} between the corresponding vertices in the polytope. As a generalization of edge walks, circuit walks are of interest due to their relationship to the combinatorial diameter of polyhedra \cite{bfh-14,bdf-16,bsy-18}. The \textit{circuit distance} between vertices refers to the minimum number of steps needed to join the vertices by a circuit walk and hence provides a lower bound on the combinatorial distance between the vertices. The related  \textit{circuit diameter} of a polyhedron -- the minimum number of steps needed to join any pair of vertices by a circuit walk -- then serves as a lower bound on its combinatorial diameter. In particular, circuit diameters provide insight into the \textit{polynomial Hirsch Conjecture}, one of the fundamental open questions in linear programming. See \cite{ks-10} for a survey of this field of study.

To build transformations between clusterings, we use one of the main tools from \cite{b-13} for analyzing fixed-size clusterings. Given two $k$-clusterings $\mathcal{C}, \mathcal{C}'$ of the same data set, the \textit{clustering-difference graph} $CDG(\mathcal{C}, \mathcal{C}')$ is a directed graph that models the transfers of items required for transforming $\mathcal{C}$ into $\mathcal{C}'$. In the context of  fixed-size clusterings, such a graph decomposes into directed cycles. Sets of cyclical moves can then be integrated together in order to bound the combinatorial distance between vertices in the fixed-size partition polytope \cite{b-13}.

We generalize this graph-theoretic approach for constructing transformations between clusterings to the context of general and bounded-size $k$-clusterings. In these situations, both cycles and paths in a clustering-difference graph correspond to circuits of the related polytopes. Integrating these different types of moves becomes much more technically challenging than integrating only cyclical moves since sequential moves alter the sizes of the underlying clusters. In Section~\ref{sec:moves}, we show how this is possible for certain combinations of cyclical and sequential moves, providing various \textit{double-moves} which reduce the number of steps needed for transforming clusterings (Theorems~\ref{thm:two_exchanges} and \ref{thm:three_intersections}). Next, we prove in Section~\ref{sec:bounds} how these double-moves lead to an upper bound (Theorem~\ref{thm:improved_bound}) on the so-called \textit{transformation distance} between $k$-clusterings: a relaxation of the circuit distance. In Section~\ref{sec:diameter} we then prove the implications of this bound on the circuit diameter of the bounded-size partition polytope (Theorem~\ref{thm:improved_diameter}). We end with a brief discussion on future directions of research in Section~\ref{sec:future_directions}.

\section{Moves and Double-Moves for Transforming Clusterings}\label{sec:moves}

Let $\mathcal{C}, \mathcal{C}'$ be two $k$-clusterings of the same data set. We recall from \cite{b-13} the definition of the \textit{clustering-difference graph} $\CDG$ from $\mathcal{C}$ to $\mathcal{C'}$, a graph-theoretic model for the difference between the two clusterings.

\begin{definition}[Clustering-difference Graph]\label{def:cdg}
For two $k$-clusterings $\mathcal{C} = (C_1,...,C_k)$ and $\C' = (C'_1,...,C'_k)$ of a data set $X = \{x_1,...,x_n\}$, the clustering-difference graph $\CDG$ from $\mathcal{C}$ to $\mathcal{C}'$ is a directed arc-labeled multigraph with vertex set $V = \{c_1,...,c_k\}$ and edge set $A$, where an edge $(c_i, c_j)$ with label $x_\ell$ belongs to $A$ if and only if $x_\ell \in C_i$ and $x_\ell \in C'_j$ for $i \neq j$. 
\end{definition}

\noindent Thus, the edges of $\CDG$ describe all single-item \textit{transfers} needed to transform $\C$ into $\C'$. The number of edges is equal to the number of items whose cluster assignment differs from $\C$ to $\C'$ -- if an edge $(c_i, c_j)$ with label $x_\ell$ belongs to $A$, item $x_\ell$ must be moved from cluster $i$ to cluster $j$ as a part of the transformation. Note that this allows for parallel edges in $\CDG$, but all edges have different labels. We refer to the transfers of a $CDG$ consisting of a single directed cycle as a \textit{cyclical move} of items among the clusters. Similarly, the set of transfers from a directed path is referred to as a \textit{sequential move} of items. See Figure~\ref{fig:cdgs} for examples of these two types of moves with corresponding clustering-difference graphs.

\begin{figure}
\center
\begin{subfigure}[t]{0.45 \textwidth}
\center
\begin{tikzpicture}[vertices/.style={draw, fill=black, circle, inner sep=0pt, minimum size = 4pt, outer sep=0pt}, scale=1.3]

\node[vertices] (c_1) at (1.5,1.1) {};
\node[vertices] (c_2) at (2.5, 2) {};
\node[vertices] (c_3) at (3.5, 1.1) {};
\node[vertices] (c_4) at (3.1, 0) {};
\node[vertices] (c_5) at (1.9, 0) {};

\foreach \to/\from in {c_1/c_2, c_2/c_3, c_3/c_4, c_4/c_5, c_5/c_1}
\draw[draw=black, line width= 1,  ->, >=latex]  (\to)--(\from);

\node[above left] at (c_1) {$c_1$};
\node[above] at (c_2) {$c_2$};
\node[above right] at (c_3) {$c_3$};
\node[below right] at (c_4) {$c_4$};
\node[below left] at (c_5) {$c_5$};

\node (x_1) at (1.95, 1.69) {$x_1$};
\node (x_2) at (3.05, 1.69) {$x_2$};
\node (x_3) at (3.45, 0.5) {$x_3$};
\node (x_4) at (2.5, -0.13) {$x_4$};
\node (x_5) at (1.55, 0.5) {$x_5$};

\end{tikzpicture}
\caption{The clustering-difference graph $\CDG$ when $\C' = ( \{x_5\}, \{x_1\}, \{x_2\}, \{x_3\}, \{x_4\})$. The graph is a directed cycle, representing a cyclical move of items among the clusters.}\label{fig:cyclical}
\end{subfigure}
\qquad
\begin{subfigure}[t]{0.45 \textwidth}
\center
\begin{tikzpicture}[vertices/.style={draw, fill=black, circle, inner sep=0pt, minimum size = 4pt, outer sep=0pt}, scale=1.3]

\node[vertices] (c_1) at (1.5,1.1) {};
\node[vertices] (c_2) at (2.5, 2) {};
\node[vertices] (c_3) at (3.5, 1.1) {};
\node[vertices] (c_4) at (3.1, 0) {};
\node[vertices] (c_5) at (1.9, 0) {};

\foreach \to/\from in {c_1/c_2, c_2/c_3, c_3/c_4, c_4/c_5}
\draw[draw=black, line width= 1,  ->, >=latex]  (\to)--(\from);

\node[above left] at (c_1) {$c_1$};
\node[above] at (c_2) {$c_2$};
\node[above right] at (c_3) {$c_3$};
\node[below right] at (c_4) {$c_4$};
\node[below left] at (c_5) {$c_5$};

\node (x_1) at (1.95, 1.69) {$x_1$};
\node (x_2) at (3.05, 1.69) {$x_2$};
\node (x_3) at (3.45, 0.5) {$x_3$};
\node (x_4) at (2.5, -0.13) {$x_4$};

\end{tikzpicture}
\caption{The clustering-difference graph $\CDG$ when $\C' = ( \emptyset, \{x_1\}, \{x_2\}, \{x_3\}, \{x_4, x_5\})$. The graph is a directed path, representing a sequential move of items among the clusters.}\label{fig:sequential}
\end{subfigure}
\caption{Given $X = \{x_1,x_2,x_3,x_4,x_5\}$, let $\C = (\{x_1\}, \{x_2\}, \{x_3\}, \{x_4\}, \{x_5\})$ be a $k$-clustering of $X$. For two choices of a different $k$-clustering $\C'$ of $X$, Figures \ref{fig:cyclical} and \ref{fig:sequential} give the corresponding clustering-difference graphs which represent a cyclical and sequential move of items.}\label{fig:cdgs}
\end{figure}
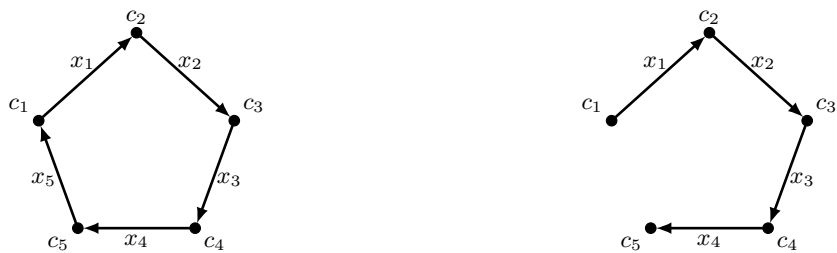

The clustering-difference graph plays an important role in the analysis of bounded-size and fixed-size partition polytopes. For the fixed-size partition polytope, a $CDG$ decomposes into directed cycles since all vertices in the graph must have equal indegree and outdegree. Two vertices in the polytope then share an edge if and only if the corresponding $CDG$ consists of a single directed cycle. This characterization has been used to construct edge walks between vertices of the polytope using sequences of cyclical moves, resulting in upper bounds on the combinatorial diameter \cite{b-13}.

Although the edges of the bounded-size partition polytope have a more complicated characterization, its \textit{circuits} analogously correspond to clustering-difference graphs consisting of either a single directed cycle or a single directed path \cite{bv-17}. Hence, devising sequences of cyclical and sequential moves for transforming $\C$ into $\C'$ can be interpreted as constructing a \textit{circuit walk} between the corresponding vertices in the polytope. As we will show in Sections \ref{sec:bounds} and \ref{sec:diameter}, this allows us to bound the \textit{circuit distance} between vertices and the related \textit{circuit diameter} of the bounded-size partition polytope.

Therefore, we are interested in constructing a transformation from $\C$ to $\C'$ using as few of these cyclical and sequential moves as possible. We call the minimum number of required moves the \textit{transformation distance} $d(\C, \C')$ from $\C$ to $\C'$.

\begin{definition}[Transformation Distance]
For $k$-clusterings $\C, \C'$ of the same data set, the \textit{transformation distance} $d(\C, \C')$ is the minimum number of cyclical and sequential moves needed to transform $\C$ into $\C'$.
\end{definition}

\noindent Hence, $d(\C, \C')$ is a relaxation of the circuit distance between the corresponding vertices of the bounded-size partition polytope -- as long as no cluster size constraints are violated during a sequence of moves used to achieve $d(\C, \C')$, the two distances are equal. 

A naive approach for bounding $d(\C, \C')$ is to decompose $\CDG$ into paths and cycles and then simply apply the corresponding moves individually to perform the clustering transformation. However, even when such a decomposition is optimal, $d(\C, \C')$ can be significantly less than the number of parts in the decomposition. For example, Figure~\ref{fig:disjoint_cycles:orig_CDG} depicts a case in which $\CDG$ consists of four vertex-disjoint cycles, which trivially implies $d(\C, \C') \leq 4$. Nevertheless, due to the fact that any permutation can be expressed as the product of two cyclic permutations \cite{br-74}, only two cyclical moves are needed to transform $\C$ into $\C'$, implying $d(\C, \C') = 2$. For simplicity, we will use the term \textit{disjoint} in place of \textit{vertex-disjoint} throughout the remainder of the paper. Whenever two components of a $CDG$ are not vertex-disjoint, we will say that they \textit{intersect}.

\begin{proposition}[\cite{br-74}, Lemma 7 in \cite{b-13}]\label{prop:disjoint_cycles}
Let $\C, \C'$ be $k$-clusterings for which $\CDG$ decomposes into disjoint cycles. Then $d(\C, \C') \leq 2$. 
\end{proposition}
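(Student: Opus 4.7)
The plan is to view the transformation prescribed by $\CDG$ as a permutation on the involved clusters and then invoke the classical fact from \cite{br-74} that any permutation on a finite set is a product of at most two cyclic permutations. First, since $\CDG$ decomposes into vertex-disjoint directed cycles, every cluster $c$ appearing in $\CDG$ has in-degree and out-degree exactly $1$. Hence each such cluster loses a unique item $x_c$ and gains a unique item during the transformation. Let $K$ denote the set of clusters appearing in $\CDG$ and define the permutation $\sigma \colon K \to K$ by letting $\sigma(c)$ be the unique successor of $c$ along its cycle in $\CDG$. The prescribed transformation is then exactly: for each $c \in K$, move $x_c$ from $c$ to $\sigma(c)$.

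Next I would apply the classical factorization result to write $\sigma = \alpha \cdot \beta$ as a composition of two cyclic permutations on $K$, allowing one of them to be the identity if $\sigma$ is itself already a single cycle. Each cyclic permutation translates directly into a cyclical move in the sense of Section~\ref{sec:moves}: $\beta$ prescribes a cyclical move in which each $c$ in its support sends the item currently at $c$ to $\beta(c)$, and $\alpha$ analogously prescribes a second cyclical move performed afterward. Applying these two moves in sequence is the proposed transformation.

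The remaining step is to verify the sequence achieves the intended transfers by tracking the items. After the $\beta$-move, any cluster $d$ in the support of $\beta$ holds the newly arrived item $x_{\beta^{-1}(d)}$, while all clusters outside the support of $\beta$ are unchanged. The subsequent $\alpha$-move then transports whichever item sits at $d$ onward to $\alpha(d)$, so that for every $c \in K$ the item $x_c$ ends up at $\alpha(\beta(c)) = \sigma(c)$. A brief separate case covers clusters not in the support of $\beta$, where $x_c$ is carried by $\alpha$ alone to $\alpha(c) = \sigma(c)$. Since only two cyclical moves are used in total, $d(\C,\C') \leq 2$.

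The main obstacle, and essentially the only nontrivial ingredient, is producing the factorization $\sigma = \alpha \cdot \beta$ explicitly from \cite{br-74} and ensuring that the two resulting cyclic permutations can indeed be interpreted as valid cyclical moves (in particular, handling the degenerate cases where $\sigma$ is already a single cycle, is the identity, or fixes some clusters outright). Once the factorization is in hand, the item-tracking calculation is mechanical, so the technical content of the proof reduces entirely to this classical permutation-factorization result.
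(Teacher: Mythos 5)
Your proposal is correct and follows the same route the paper takes: the text immediately preceding Proposition~\ref{prop:disjoint_cycles} explains the bound precisely by viewing the disjoint cycles of $\CDG$ as a permutation and invoking the factorization into two cyclic permutations from \cite{br-74}, which is exactly your argument. Your item-tracking verification is sound; the only minor caveat is that the ``degenerate'' cases you flag cannot actually arise (every cycle in $\CDG$ has length at least two, so $\sigma$ has no fixed points on $K$), and it is worth being explicit that each cluster $d$ sends the newly arrived item $x_{\beta^{-1}(d)}$, not an arbitrary resident item, in the second move.
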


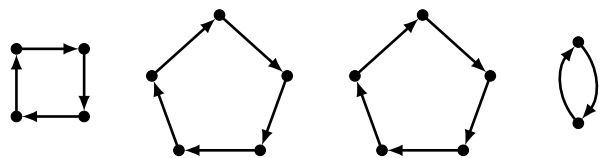
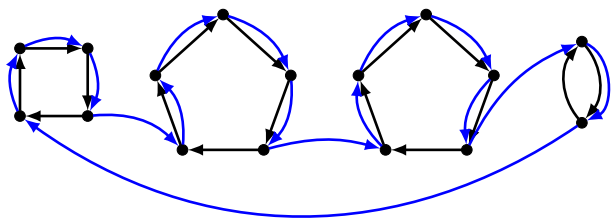
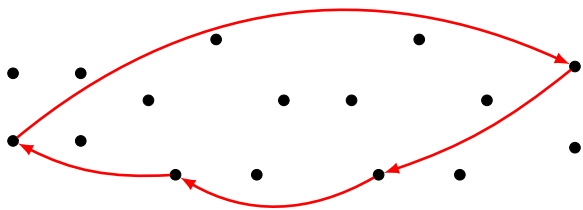
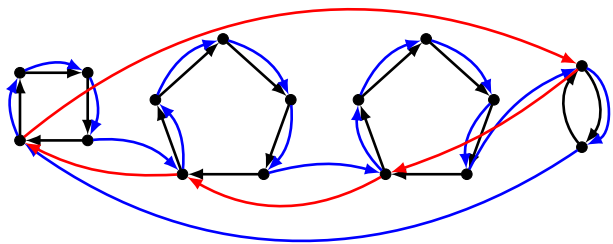
\begin{figure}
\centering

\begin{subfigure}{0.8\textwidth}
\centering
\begin{tikzpicture}[vertices/.style={draw, fill=black, circle, inner sep=0pt, minimum size = 4pt, outer sep=0pt}, scale=0.9]

\node[vertices] (w_1) at (0,0.5) {};
\node[vertices] (w_2) at (0,1.5) {};
\node[vertices] (w_3) at (1, 1.5) {};
\node[vertices] (w_4) at (1, 0.5) {};

\node[vertices] (c_1) at (2.0,1.1) {};
\node[vertices] (c_2) at (3.0, 2) {};
\node[vertices] (c_3) at (4.0, 1.1) {};
\node[vertices] (c_4) at (3.6, 0) {};
\node[vertices] (c_5) at (2.4, 0) {};

\node[vertices] (d_1) at (5,1.1) {};
\node[vertices] (d_2) at (6, 2) {};
\node[vertices] (d_3) at (7, 1.1) {};
\node[vertices] (d_4) at (6.6, 0) {};
\node[vertices] (d_5) at (5.4, 0) {};

\node[vertices] (z_1) at (8.3,0.4) {};
\node[vertices] (z_2) at (8.3,1.6) {};

\foreach \to/\from in {w_1/w_2, w_2/w_3, w_3/w_4, w_4/w_1,
	c_1/c_2, c_2/c_3, c_3/c_4, c_4/c_5, c_5/c_1,
	d_1/d_2, d_2/d_3, d_3/d_4, d_4/d_5, d_5/d_1}
\draw[draw=black, line width= 1,  ->, >=latex]  (\to)--(\from);

\path [draw=black,line width= 1,->,>=latex,black] (z_1) edge[bend left=40] (z_2);
\path [draw=black,line width= 1,->,>=latex,black] (z_2) edge[bend left=40] (z_1);


\end{tikzpicture}
\caption{The original clustering-difference graph $CDG(\C, \C')$.}
\label{fig:disjoint_cycles:orig_CDG}
\end{subfigure}

\vspace{.1in}
\begin{subfigure}{0.8 \textwidth}
\centering
\begin{tikzpicture}[vertices/.style={draw, fill=black, circle, inner sep=0pt, minimum size = 4pt, outer sep=0pt}, scale=0.9]

\node[vertices] (w_1) at (0,0.5) {};
\node[vertices] (w_2) at (0,1.5) {};
\node[vertices] (w_3) at (1, 1.5) {};
\node[vertices] (w_4) at (1, 0.5) {};

\node[vertices] (c_1) at (2.0,1.1) {};
\node[vertices] (c_2) at (3.0, 2) {};
\node[vertices] (c_3) at (4.0, 1.1) {};
\node[vertices] (c_4) at (3.6, 0) {};
\node[vertices] (c_5) at (2.4, 0) {};

\node[vertices] (d_1) at (5,1.1) {};
\node[vertices] (d_2) at (6, 2) {};
\node[vertices] (d_3) at (7, 1.1) {};
\node[vertices] (d_4) at (6.6, 0) {};
\node[vertices] (d_5) at (5.4, 0) {};

\node[vertices] (z_1) at (8.3,0.4) {};
\node[vertices] (z_2) at (8.3,1.6) {};

\foreach \to/\from in {w_1/w_2, w_2/w_3, w_3/w_4, w_4/w_1,
	c_1/c_2, c_2/c_3, c_3/c_4, c_4/c_5, c_5/c_1,
	d_1/d_2, d_2/d_3, d_3/d_4, d_4/d_5, d_5/d_1}
\draw[draw=black, line width= 1,  ->, >=latex]  (\to)--(\from);

\path [draw=black,line width= 1,->,>=latex,black] (z_1) edge[bend left=40] (z_2);
\path [draw=black,line width= 1,->,>=latex,black] (z_2) edge[bend left=40] (z_1);


\path [draw=blue,line width= 1,->,>=latex,blue] (w_1) edge[bend left=25] (w_2);
\path [draw=blue,line width= 1,->,>=latex,blue] (w_2) edge[bend left=25] (w_3);
\path [draw=blue,line width= 1,->,>=latex,blue] (w_3) edge[bend left=25] (w_4);
\path [draw=blue,line width= 1,->,>=latex,blue] (w_4) edge[bend left=25] (c_5);

\path [draw=blue,line width= 1,->,>=latex,blue] (c_5) edge[bend right=25] (c_1);
\path [draw=blue,line width= 1,->,>=latex,blue] (c_1) edge[bend left=25] (c_2);
\path [draw=blue,line width= 1,->,>=latex,blue] (c_2) edge[bend left=25] (c_3);
\path [draw=blue,line width= 1,->,>=latex,blue] (c_3) edge[bend left=25] (c_4);
\path [draw=blue,line width= 1,->,>=latex,blue] (c_4) edge[bend left=15] (d_5);
\path [draw=blue,line width= 1,->,>=latex,blue] (d_5) edge[bend left=25] (d_1);
\path [draw=blue,line width= 1,->,>=latex,blue] (d_1) edge[bend left=25] (d_2);
\path [draw=blue,line width= 1,->,>=latex,blue] (d_2) edge[bend left=25] (d_3);
\path [draw=blue,line width= 1,->,>=latex,blue] (d_3) edge[bend right=25] (d_4);
\path [draw=blue,line width= 1,->,>=latex,blue] (d_4) edge[bend left=20] (z_2);
\path [draw=blue,line width= 1,->,>=latex,blue] (z_2) edge[bend left=65] (z_1);
\path [draw=blue,line width= 1,->,>=latex,blue] (z_1) edge[bend left=35] (w_1);

\end{tikzpicture}
\caption{The first cyclical move, given by the blue edges.}\label{fig:disjoint_cycles:first_exchange}
\end{subfigure}

\vspace{.1in}
\begin{subfigure}{0.8 \textwidth}
\centering
\begin{tikzpicture}[vertices/.style={draw, fill=black, circle, inner sep=0pt, minimum size = 4pt, outer sep=0pt}, scale=0.9]

\node[vertices] (w_1) at (0,0.5) {};
\node[vertices] (w_2) at (0,1.5) {};
\node[vertices] (w_3) at (1, 1.5) {};
\node[vertices] (w_4) at (1, 0.5) {};

\node[vertices] (c_1) at (2.0,1.1) {};
\node[vertices] (c_2) at (3.0, 2) {};
\node[vertices] (c_3) at (4.0, 1.1) {};
\node[vertices] (c_4) at (3.6, 0) {};
\node[vertices] (c_5) at (2.4, 0) {};

\node[vertices] (d_1) at (5,1.1) {};
\node[vertices] (d_2) at (6, 2) {};
\node[vertices] (d_3) at (7, 1.1) {};
\node[vertices] (d_4) at (6.6, 0) {};
\node[vertices] (d_5) at (5.4, 0) {};

\node[vertices] (z_1) at (8.3,0.4) {};
\node[vertices] (z_2) at (8.3,1.6) {};


\path [draw=red,line width= 1,->,>=latex,red] (z_2) edge[bend left=10] (d_5);
\path [draw=red,line width= 1,->,>=latex,red] (d_5) edge[bend left=30] (c_5);
\path [draw=red,line width= 1,->,>=latex,red] (c_5) edge[bend left=15] (w_1);
\path [draw=red,line width= 1,->,>=latex,red] (w_1) edge[bend left=32] (z_2);

\end{tikzpicture}
\caption{The resulting clustering-difference graph after the first cyclical move is applied (and hence, the second cyclical move in the double-move), given by the red edges.}\label{fig:disjoint_cycles:second_exchange}
\end{subfigure}

\vspace{.1in}
\begin{subfigure}{0.8 \textwidth}
\centering
\begin{tikzpicture}[vertices/.style={draw, fill=black, circle, inner sep=0pt, minimum size = 4pt, outer sep=0pt}, scale=0.9]

\node[vertices] (w_1) at (0,0.5) {};
\node[vertices] (w_2) at (0,1.5) {};
\node[vertices] (w_3) at (1, 1.5) {};
\node[vertices] (w_4) at (1, 0.5) {};

\node[vertices] (c_1) at (2.0,1.1) {};
\node[vertices] (c_2) at (3.0, 2) {};
\node[vertices] (c_3) at (4.0, 1.1) {};
\node[vertices] (c_4) at (3.6, 0) {};
\node[vertices] (c_5) at (2.4, 0) {};

\node[vertices] (d_1) at (5,1.1) {};
\node[vertices] (d_2) at (6, 2) {};
\node[vertices] (d_3) at (7, 1.1) {};
\node[vertices] (d_4) at (6.6, 0) {};
\node[vertices] (d_5) at (5.4, 0) {};

\node[vertices] (z_1) at (8.3,0.4) {};
\node[vertices] (z_2) at (8.3,1.6) {};

\foreach \to/\from in {w_1/w_2, w_2/w_3, w_3/w_4, w_4/w_1,
	c_1/c_2, c_2/c_3, c_3/c_4, c_4/c_5, c_5/c_1,
	d_1/d_2, d_2/d_3, d_3/d_4, d_4/d_5, d_5/d_1}
\draw[draw=black, line width= 1,  ->, >=latex]  (\to)--(\from);

\path [draw=black,line width= 1,->,>=latex,black] (z_1) edge[bend left=40] (z_2);
\path [draw=black,line width= 1,->,>=latex,black] (z_2) edge[bend left=40] (z_1);


\path [draw=blue,line width= 1,->,>=latex,blue] (w_1) edge[bend left=25] (w_2);
\path [draw=blue,line width= 1,->,>=latex,blue] (w_2) edge[bend left=25] (w_3);
\path [draw=blue,line width= 1,->,>=latex,blue] (w_3) edge[bend left=25] (w_4);
\path [draw=blue,line width= 1,->,>=latex,blue] (w_4) edge[bend left=25] (c_5);

\path [draw=blue,line width= 1,->,>=latex,blue] (c_5) edge[bend right=25] (c_1);
\path [draw=blue,line width= 1,->,>=latex,blue] (c_1) edge[bend left=25] (c_2);
\path [draw=blue,line width= 1,->,>=latex,blue] (c_2) edge[bend left=25] (c_3);
\path [draw=blue,line width= 1,->,>=latex,blue] (c_3) edge[bend left=25] (c_4);
\path [draw=blue,line width= 1,->,>=latex,blue] (c_4) edge[bend left=15] (d_5);
\path [draw=blue,line width= 1,->,>=latex,blue] (d_5) edge[bend left=25] (d_1);
\path [draw=blue,line width= 1,->,>=latex,blue] (d_1) edge[bend left=25] (d_2);
\path [draw=blue,line width= 1,->,>=latex,blue] (d_2) edge[bend left=25] (d_3);
\path [draw=blue,line width= 1,->,>=latex,blue] (d_3) edge[bend right=25] (d_4);
\path [draw=blue,line width= 1,->,>=latex,blue] (d_4) edge[bend left=20] (z_2);
\path [draw=blue,line width= 1,->,>=latex,blue] (z_2) edge[bend left=65] (z_1);
\path [draw=blue,line width= 1,->,>=latex,blue] (z_1) edge[bend left=35] (w_1);

\path [draw=red,line width= 1,->,>=latex,red] (z_2) edge[bend left=10] (d_5);
\path [draw=red,line width= 1,->,>=latex,red] (d_5) edge[bend left=30] (c_5);
\path [draw=red,line width= 1,->,>=latex,red] (c_5) edge[bend left=15] (w_1);
\path [draw=red,line width= 1,->,>=latex,red] (w_1) edge[bend left=32] (z_2);

\end{tikzpicture}
\caption{A combined visualization of the double-move for transforming $\C$ into $\C'$.}\label{fig:disjoint_cycles:both_exchanges}
\end{subfigure}

\caption{A case where $\CDG$ consists of four disjoint cycles. The above double-move from \cite{b-13} can be used to transform $\C$ into $\C'$ in only two cyclical moves, implying $d(\C, \C') = 2$.}\label{fig:disjoint_cycles}
\end{figure}

In \cite{b-13}, Proposition~\ref{prop:disjoint_cycles} is used to integrate disjoint cyclical moves into what we will call a \textit{double-move}: a sequence of two moves which results in the desired changes to the underlying clustering-difference graph. See Figure~\ref{fig:disjoint_cycles} for a visualization of this double-move. In a first cyclical move of items, depicted in Figure~\ref{fig:disjoint_cycles:first_exchange}, the transfers corresponding to all but one edge from each cycle are correctly applied -- the items corresponding to the remaining edges are temporarily sent to incorrect destinations across the cycles. However, a second cyclical move, depicted in Figure~\ref{fig:disjoint_cycles:second_exchange}, can then be used to send each of these misplaced items to its correct destination, completing the clustering transformation.

In the following lemmas and the upcoming Theorems~\ref{thm:two_exchanges} and \ref{thm:three_intersections}, we show how similar double-moves can be used to integrate both cyclical and sequential moves when transforming general $k$-clusterings. This becomes more technically challenging due to the different natures of the two types of moves -- cyclical moves do not alter the sizes of any underlying cluster while sequential moves alter the sizes of exactly two clusters by one. First, generalizing Proposition~\ref{prop:disjoint_cycles}, we show how to integrate sets of disjoint cycles and paths from a $CDG$. Note that the bound on $d(\C, \C')$ in the following lemma depends only on the number of paths in the decomposition and not on the number of cycles.

\begin{lemma}\label{lem:disjoint_paths}
Let $\C, \C'$ be $k$-clusterings for which $CDG(\C, \C')$ decomposes into disjoint cycles and paths. Then $d(\C, \C') \leq \max\{2, \ t\}$, where $t$ is the number of paths in the decomposition.
\end{lemma}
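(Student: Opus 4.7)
The plan is to case-split on the number of paths $t$ and the number of cycles $m$ in the decomposition of $CDG(\C, \C')$. If $t = 0$, the claim follows immediately from Proposition~\ref{prop:disjoint_cycles}, giving $d(\C, \C') \leq 2 = \max\{2, 0\}$. If $t \geq 1$ but $m = 0$, performing the $t$ individual sequential moves corresponding to $P_1, \dots, P_t$ yields $d(\C, \C') \leq t \leq \max\{2, t\}$. The substantive case is therefore $t \geq 1$ and $m \geq 1$, where the cycles must be absorbed into the path-moves.

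The main construction is a single long sequential move, call it \emph{Move 1}, whose underlying directed path $Q$ is built by splicing $P_1$, each cycle $Z_j$ (entered at a chosen first vertex $b_0^{(j)}$ and traversed once in cyclic order), and, when $t \geq 2$, the path $P_2$ as well; successive components of $Q$ are glued by a connector edge. On every edge of $Q$ that coincides with an edge of $P_1$, $P_2$, or the interior of some $Z_j$, Move 1 carries the item labeling that edge in $CDG(\C, \C')$, so these transfers are correct. On each connector edge Move 1 deliberately misplaces one item: on $Z_j$'s outgoing connector it carries the ``last'' item $y_{r_j}^{(j)}$ of $Z_j$, and on the first connector leaving $P_1$ it carries an arbitrary item of the sending cluster (when $t=1$, the point of insertion is an interior vertex $a_j$ of $P_1$).

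A direct check shows that the misplacements produced by Move 1 assemble into the residual CDG between the intermediate clustering and $\C'$ as a single directed cycle on $\{a_{j+1}, b_0^{(1)}, \dots, b_0^{(m)}\}$ when $t = 1$, and as a single directed path running from the source of $P_2$ through $b_0^{(m)}, \dots, b_0^{(1)}$ to the sink of $P_1$ when $t \geq 2$. In either case a single \emph{Move 2} --- cyclical when $t = 1$ and sequential when $t \geq 2$ --- corrects all misplacements. For $t \geq 3$ the remaining paths $P_3, \dots, P_t$ are then performed as $t - 2$ further individual sequential moves, for a total of $2 + (t-2) = t$; for $t \leq 2$, Moves 1 and 2 alone suffice, giving the bound $d(\C, \C') \leq \max\{2, t\}$.

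The main obstacle is the combinatorial bookkeeping that Move 1's misplacements really assemble into exactly one component of the required type. One must verify that every cluster-vertex in the residual CDG has matching in- and out-degree in the correct pattern, that no stray transfers are introduced, and that each item designated on a connector edge genuinely resides in its intended tail cluster at the start of Move 1 (which is where picking the ``last'' item of each cycle and any resident item of $C_{a_p}$ or $C_{a_j}$ comes in). Once this structural verification is in place, the upper bound on $d(\C, \C')$ follows directly from the two- (or $t$-) move construction.
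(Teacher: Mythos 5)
Your proposal is correct and takes essentially the same approach as the paper: splice the cycles and (up to two) paths into a single long sequential Move~1 via connector edges, correct all resulting misplacements with a single Move~2 (cyclical when $t=1$, sequential when $t\geq 2$), and apply any remaining paths individually. The only notable difference is cosmetic: your Move~1 traverses all of $P_1$ before entering the cycles, whereas the paper's Move~1 for $t\geq 2$ jumps from the tail $p_1$ directly into the cycles and defers the rest of $P_1$ to Move~2, but both variants give valid two-move constructions yielding the stated bound.
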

\begin{proof}
Let $t$ denote the number of directed paths and $s$ the number of directed cycles in the decomposition of $\CDG$. When $t = 0$, the result follows from Proposition~\ref{prop:disjoint_cycles}, so assume $t \geq 1$. We may also assume $s \geq 1$, else the $t$ sequential moves could simply be applied individually.

First suppose $t=2$. Let $P_1, P_2$ denote the paths of $\CDG$ and let $Y_1,...,Y_s$ denote the cycles. For $i = 1,...,s$, select any edge $e_i := (u_i, v_i)$ from cycle $Y_i$. In addition, let $p_1$ denote the tail of $P_1$, $p_2$ the tail of $P_2$, and $w$ the neighbor of $p_1$ on $P_1$. See Figure~\ref{fig:two_paths:orig_CDG}. We will apply a double-move consisting of two sequential moves to transform clustering $\C$ into $\C'$. 

For the first sequential move, first introduce an edge from $p_1$ to $v_1$, sending to $v_1$ the item from $p_1$ intended for $w$. After traveling along this edge, follow $Y_1 - e_1$ from $v_1$ to $u_1$. Next, if $s > 1$, introduce an edge from $u_1$ to $v_2$, whose item associated with $e_1$, and then travel along $Y_2 - e_2$ to $u_2$. Repeat for $i = 2,...,s-1$ until $u_s$ is reached. Complete the move by introducing an edge from $u_s$ to $p_2$, whose item is associated with $e_s$, and then traveling along path $P_2$. See the blue edges in Figure~\ref{fig:two_paths:first_exchange} for a visualization of this move. 

Note that this first sequential move applies all transfers given by the cycles $Y_1,...,Y_s$ in $CDG(\C, \C')$ with the exception of those corresponding to the edges $e_i$ -- the item from $v_i$ intended for $u_i$ is temporarily sent to the wrong cluster. The move also applies all transfers given by $P_2$ but does not correctly apply any transfer from $P_1$. However, all misplaced items can be corrected and all remaining transfers can be applied via a single, second sequential move as seen in Figure~\ref{fig:two_paths:second_exchange}. First, send from $p_2$ to $v_s$ the item $p_2$ received from $u_s$. Next, for $i=s,...,2$, send from $v_i$ to $v_{i-1}$ the item $v_i$ received from $u_{i-1}$. Once $v_1$ is reached, send from $v_1$ to $w$ the item $v_1$ received from $p_1$. Finish the move by following the remaining edges of $P_1$.  After this second sequential move is applied, the transformation from $\C$ to $\C'$ is complete -- it follows that $d(\C, \C') = 2$. See Figure~\ref{fig:two_paths:both_exchanges} for a visualization of these moves integrated together into a double-move

\begin{figure}
\centering

\begin{subfigure}{0.8\textwidth}
\centering
\begin{tikzpicture}[vertices/.style={draw, fill=black, circle, inner sep=0pt, minimum size = 4pt, outer sep=0pt}]

\node[vertices] (w_1) at (0,0) {};
\node[vertices] (w_2) at (0,1) {};
\node[vertices] (w_3) at (0, 2) {};

\node[vertices] (c_1) at (1.5,1.1) {};
\node[vertices] (c_2) at (2.5, 2) {};
\node[vertices] (c_3) at (3.5, 1.1) {};
\node[vertices] (c_4) at (3.1, 0) {};
\node[vertices] (c_5) at (1.9, 0) {};

\node[vertices] (d_1) at (5,1.1) {};
\node[vertices] (d_2) at (6, 2) {};
\node[vertices] (d_3) at (7, 1.1) {};
\node[vertices] (d_4) at (6.6, 0) {};
\node[vertices] (d_5) at (5.4, 0) {};

\node[vertices] (z_1) at (8.5,0) {};
\node[vertices] (z_2) at (8.5,1) {};
\node[vertices] (z_3) at (8.5, 2) {};

\foreach \to/\from in {w_1/w_2, w_2/w_3,
	c_1/c_2, c_2/c_3, c_3/c_4, c_4/c_5, c_5/c_1,
	d_1/d_2, d_2/d_3, d_3/d_4, d_4/d_5, d_5/d_1,
	z_1/z_2, z_2/z_3}
\draw[draw=black, line width= 1,  ->, >=latex]  (\to)--(\from);

\node[below left] at (w_1) {$p_1$};
\node[left] at (w_2) {$w$};
\node[below right] at (z_1) {$p_2$};
\node[below right] at (c_4) {$u_1$};
\node[below left] at (c_5) {$v_1$};
\node[below right] at (d_4) {$u_2$};
\node[below left] at (d_5) {$v_2$};

\end{tikzpicture}
\caption{The original clustering-difference graph $CDG(\C, \C')$.}
\label{fig:two_paths:orig_CDG}
\end{subfigure}

\vspace{.1in}
\begin{subfigure}{0.8 \textwidth}
\centering
\begin{tikzpicture}[vertices/.style={draw, fill=black, circle, inner sep=0pt, minimum size = 4pt, outer sep=0pt}]

\node[vertices] (w_1) at (0,0) {};
\node[vertices] (w_2) at (0,1) {};
\node[vertices] (w_3) at (0, 2) {};

\node[vertices] (c_1) at (1.5,1.1) {};
\node[vertices] (c_2) at (2.5, 2) {};
\node[vertices] (c_3) at (3.5, 1.1) {};
\node[vertices] (c_4) at (3.1, 0) {};
\node[vertices] (c_5) at (1.9, 0) {};

\node[vertices] (d_1) at (5,1.1) {};
\node[vertices] (d_2) at (6, 2) {};
\node[vertices] (d_3) at (7, 1.1) {};
\node[vertices] (d_4) at (6.6, 0) {};
\node[vertices] (d_5) at (5.4, 0) {};

\node[vertices] (z_1) at (8.5,0) {};
\node[vertices] (z_2) at (8.5,1) {};
\node[vertices] (z_3) at (8.5, 2) {};

\foreach \to/\from in {w_1/w_2, w_2/w_3,
	c_1/c_2, c_2/c_3, c_3/c_4, c_4/c_5, c_5/c_1,
	d_1/d_2, d_2/d_3, d_3/d_4, d_4/d_5, d_5/d_1,
	z_1/z_2, z_2/z_3}
\draw[draw=black, line width= 1,  ->, >=latex]  (\to)--(\from);

\node[below left] at (w_1) {$p_1$};
\node[left] at (w_2) {$w$};
\node[below right] at (z_1) {$p_2$};
\node[below right] at (c_4) {$u_1$};
\node[below left] at (c_5) {$v_1$};
\node[below right] at (d_4) {$u_2$};
\node[below left] at (d_5) {$v_2$};

\path [draw=blue,line width= 1,->,>=latex,blue] (w_1) edge[bend left=10] (c_5);
\path [draw=blue,line width= 1,->,>=latex,blue] (c_5) edge[bend left=25] (c_1);
\path [draw=blue,line width= 1,->,>=latex,blue] (c_1) edge[bend left=25] (c_2);
\path [draw=blue,line width= 1,->,>=latex,blue] (c_2) edge[bend left=25] (c_3);
\path [draw=blue,line width= 1,->,>=latex,blue] (c_3) edge[bend left=25] (c_4);
\path [draw=blue,line width= 1,->,>=latex,blue] (c_4) edge[bend left=15] (d_5);
\path [draw=blue,line width= 1,->,>=latex,blue] (d_5) edge[bend left=25] (d_1);
\path [draw=blue,line width= 1,->,>=latex,blue] (d_1) edge[bend left=25] (d_2);
\path [draw=blue,line width= 1,->,>=latex,blue] (d_2) edge[bend left=25] (d_3);
\path [draw=blue,line width= 1,->,>=latex,blue] (d_3) edge[bend left=25] (d_4);
\path [draw=blue,line width= 1,->,>=latex,blue] (d_4) edge[bend left=15] (z_1);
\path [draw=blue,line width= 1,->,>=latex,blue] (z_1) edge[bend right=30] (z_2);
\path [draw=blue,line width= 1,->,>=latex,blue] (z_2) edge[bend right=30] (z_3);

\end{tikzpicture}
\caption{The first sequential move.}\label{fig:two_paths:first_exchange}
\end{subfigure}

\vspace{.1in}
\begin{subfigure}{0.8 \textwidth}
\centering
\begin{tikzpicture}[vertices/.style={draw, fill=black, circle, inner sep=0pt, minimum size = 4pt, outer sep=0pt}]

\node[vertices] (w_1) at (0,0) {};
\node[vertices] (w_2) at (0,1) {};
\node[vertices] (w_3) at (0, 2) {};

\node[vertices] (c_1) at (1.5,1.1) {};
\node[vertices] (c_2) at (2.5, 2) {};
\node[vertices] (c_3) at (3.5, 1.1) {};
\node[vertices] (c_4) at (3.1, 0) {};
\node[vertices] (c_5) at (1.9, 0) {};

\node[vertices] (d_1) at (5,1.1) {};
\node[vertices] (d_2) at (6, 2) {};
\node[vertices] (d_3) at (7, 1.1) {};
\node[vertices] (d_4) at (6.6, 0) {};
\node[vertices] (d_5) at (5.4, 0) {};

\node[vertices] (z_1) at (8.5,0) {};
\node[vertices] (z_2) at (8.5,1) {};
\node[vertices] (z_3) at (8.5, 2) {};

\foreach \to/\from in {w_2/w_3}
\draw[draw=black, line width= 1,  ->, >=latex]  (\to)--(\from);

\node[below left] at (w_1) {$p_1$};
\node[left] at (w_2) {$w$};
\node[below right] at (z_1) {$p_2$};
\node[below right] at (c_4) {$u_1$};
\node[below left] at (c_5) {$v_1$};
\node[below right] at (d_4) {$u_2$};
\node[below] at (d_5) {$v_2$};

\path [draw=red,line width= 1,->,>=latex,red] (z_1) edge[bend left=30] (d_5);
\path [draw=red,line width= 1,->,>=latex,red] (d_5) edge[bend left=30] (c_5);
\path [draw=red,line width= 1,->,>=latex,red] (c_5) edge[bend right=15] (w_2);

\end{tikzpicture}
\caption{The resulting $CDG$ after the first sequential move is applied. This directed path corresponds to the second sequential move used to transform $\C$ into $\C'$.}\label{fig:two_paths:second_exchange}
\end{subfigure}

\vspace{.1in}
\begin{subfigure}{0.8 \textwidth}
\centering
\begin{tikzpicture}[vertices/.style={draw, fill=black, circle, inner sep=0pt, minimum size = 4pt, outer sep=0pt}]

\node[vertices] (w_1) at (0,0) {};
\node[vertices] (w_2) at (0,1) {};
\node[vertices] (w_3) at (0, 2) {};

\node[vertices] (c_1) at (1.5,1.1) {};
\node[vertices] (c_2) at (2.5, 2) {};
\node[vertices] (c_3) at (3.5, 1.1) {};
\node[vertices] (c_4) at (3.1, 0) {};
\node[vertices] (c_5) at (1.9, 0) {};

\node[vertices] (d_1) at (5,1.1) {};
\node[vertices] (d_2) at (6, 2) {};
\node[vertices] (d_3) at (7, 1.1) {};
\node[vertices] (d_4) at (6.6, 0) {};
\node[vertices] (d_5) at (5.4, 0) {};

\node[vertices] (z_1) at (8.5,0) {};
\node[vertices] (z_2) at (8.5,1) {};
\node[vertices] (z_3) at (8.5, 2) {};

\foreach \to/\from in {w_1/w_2, w_2/w_3,
	c_1/c_2, c_2/c_3, c_3/c_4, c_4/c_5, c_5/c_1,
	d_1/d_2, d_2/d_3, d_3/d_4, d_4/d_5, d_5/d_1,
	z_1/z_2, z_2/z_3}
\draw[draw=black, line width= 1,  ->, >=latex]  (\to)--(\from);

\node[below left] at (w_1) {$p_1$};
\node[left] at (w_2) {$w$};
\node[below right] at (z_1) {$p_2$};
\node[below right] at (c_4) {$u_1$};
\node[below left] at (c_5) {$v_1$};
\node[below right] at (d_4) {$u_2$};
\node[below] at (d_5) {$v_2$};

\path [draw=blue,line width= 1,->,>=latex,blue] (w_1) edge[bend left=10] (c_5);
\path [draw=blue,line width= 1,->,>=latex,blue] (c_5) edge[bend left=20] (c_1);
\path [draw=blue,line width= 1,->,>=latex,blue] (c_1) edge[bend left=25] (c_2);
\path [draw=blue,line width= 1,->,>=latex,blue] (c_2) edge[bend left=25] (c_3);
\path [draw=blue,line width= 1,->,>=latex,blue] (c_3) edge[bend left=25] (c_4);
\path [draw=blue,line width= 1,->,>=latex,blue] (c_4) edge[bend left=15] (d_5);
\path [draw=blue,line width= 1,->,>=latex,blue] (d_5) edge[bend left=25] (d_1);
\path [draw=blue,line width= 1,->,>=latex,blue] (d_1) edge[bend left=25] (d_2);
\path [draw=blue,line width= 1,->,>=latex,blue] (d_2) edge[bend left=25] (d_3);
\path [draw=blue,line width= 1,->,>=latex,blue] (d_3) edge[bend left=25] (d_4);
\path [draw=blue,line width= 1,->,>=latex,blue] (d_4) edge[bend left=15] (z_1);
\path [draw=blue,line width= 1,->,>=latex,blue] (z_1) edge[bend right=30] (z_2);
\path [draw=blue,line width= 1,->,>=latex,blue] (z_2) edge[bend right=30] (z_3);

\path [draw=red,line width= 1,->,>=latex,red] (z_1) edge[bend left=30] (d_5);
\path [draw=red,line width= 1,->,>=latex,red] (d_5) edge[bend left=30] (c_5);
\path [draw=red,line width= 1,->,>=latex,red] (c_5) edge[bend right=15] (w_2);
\path [draw=red,line width= 1,->,>=latex,red] (w_2) edge[bend right=30] (w_3);

\end{tikzpicture}
\caption{A combined visualization of the double-move for transforming $\C$ into $\C'$.}\label{fig:two_paths:both_exchanges}
\end{subfigure}

\caption{A case where $CDG(\C, \C')$ consists of two paths and multiple cycles. Independent of the number of cycles, only two sequential moves are needed to transform $\C$ into $\C'$, so $d(\C, \C') = 2$.}\label{fig:two_paths}
\end{figure}
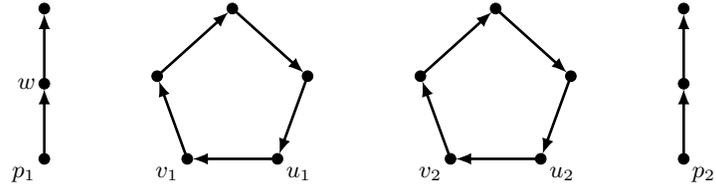
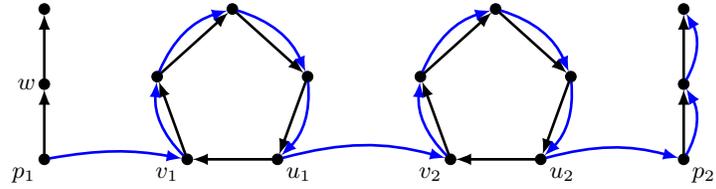
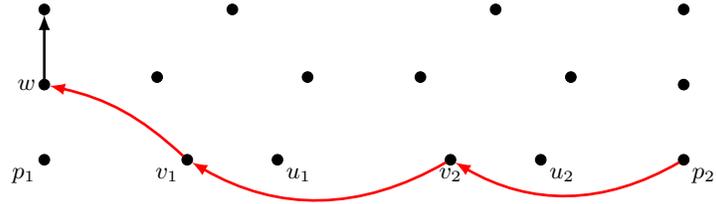
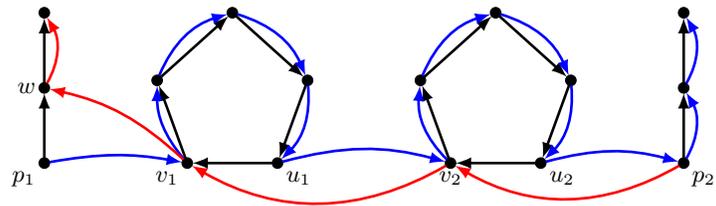

Next suppose $t > 2$. We can apply the double-move from the previous case to remove all cycles and any two paths from $\CDG$. The remaining $t-2$ paths can then be applied via individual sequential moves. Hence, $d(\C, \C') \leq t$.

Lastly, suppose $t = 1$. Again let $P_1$ denote the single path of $CDG(\C, \C')$, let $p_1$ denote the tail of $P_1$, let $w$ denote the neighbor of $p_1$ on $P_1$, and choose an edge $e_i := (u_i, v_i)$ from $Y_i$ for $i=1,...,s$. We will apply a sequential move followed by a cyclical move to transform $\C$ into $\C'$. See Figure~\ref{fig:one_path} for a visualization of the double-move.

For the first move, as in the previous case, first introduce an edge from $p_1$ to $v_1$ and then travel along $Y_1 - e_1$ from $v_1$ to $u_1$. If $s > 1$, then for $i = 1,...,s-1$, introduce an edge from $u_i$ to $v_i$ and follow $Y_i - e_i$ to $u_i$ until $u_s$ is reached. Complete the sequential move by introducing an edge from $u_s$ to $w$ and then following the remaining edges of $P_1$.

The second cyclical move corrects all items sent to incorrect destinations by the first move. Namely, first follow the edge from $w$ to $v_s$, sending to $v_s$ the item $w$ received from $u_s$. Next, for $i = s,...,2$, follow the edges from $v_{i}$ to $v_{i-1}$ until $v_1$ is reached. Complete the move by following the edge from $v_1$ to $w$, sending $w$ the item $v_1$ received from $p_1$. This completes the transformation from $\C$ to $\C'$, so $d(\C, \C') \leq 2$. 

Therefore, in each case, $d(\C, \C') \leq \max \{ 2, \  t \}$. \eoproof
\end{proof}

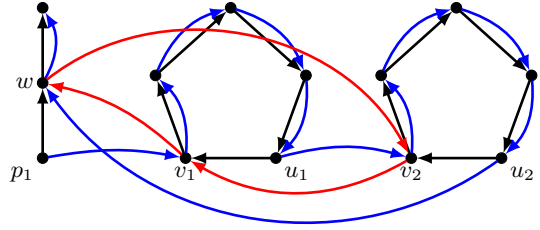
\begin{figure}
\centering
\begin{tikzpicture}[vertices/.style={draw, fill=black, circle, inner sep=0pt, minimum size = 4pt, outer sep=0pt}]

\node[vertices] (w_1) at (0,0) {};
\node[vertices] (w_2) at (0,1) {};
\node[vertices] (w_3) at (0, 2) {};

\node[vertices] (c_1) at (1.5,1.1) {};
\node[vertices] (c_2) at (2.5, 2) {};
\node[vertices] (c_3) at (3.5, 1.1) {};
\node[vertices] (c_4) at (3.1, 0) {};
\node[vertices] (c_5) at (1.9, 0) {};

\node[vertices] (d_1) at (4.5,1.1) {};
\node[vertices] (d_2) at (5.5, 2) {};
\node[vertices] (d_3) at (6.5, 1.1) {};
\node[vertices] (d_4) at (6.1, 0) {};
\node[vertices] (d_5) at (4.9, 0) {};

\foreach \to/\from in {w_1/w_2, w_2/w_3,
	c_1/c_2, c_2/c_3, c_3/c_4, c_4/c_5, c_5/c_1,
	d_1/d_2, d_2/d_3, d_3/d_4, d_4/d_5, d_5/d_1}
\draw[draw=black, line width= 1,  ->, >=latex]  (\to)--(\from);

\node[below left] at (w_1) {$p_1$};
\node[left] at (w_2) {$w$};
\node[below right] at (c_4) {$u_1$};
\node[below] at (c_5) {$v_1$};
\node[below right] at (d_4) {$u_2$};
\node[below] at (d_5) {$v_2$};

\path [draw=blue,line width= 1,->,>=latex,blue] (w_1) edge[bend left=10] (c_5);
\path [draw=blue,line width= 1,->,>=latex,blue] (c_5) edge[bend right=25] (c_1);
\path [draw=blue,line width= 1,->,>=latex,blue] (c_1) edge[bend left=25] (c_2);
\path [draw=blue,line width= 1,->,>=latex,blue] (c_2) edge[bend left=25] (c_3);
\path [draw=blue,line width= 1,->,>=latex,blue] (c_3) edge[bend left=25] (c_4);
\path [draw=blue,line width= 1,->,>=latex,blue] (c_4) edge[bend left=15] (d_5);
\path [draw=blue,line width= 1,->,>=latex,blue] (d_5) edge[bend right=25] (d_1);
\path [draw=blue,line width= 1,->,>=latex,blue] (d_1) edge[bend left=25] (d_2);
\path [draw=blue,line width= 1,->,>=latex,blue] (d_2) edge[bend left=25] (d_3);
\path [draw=blue,line width= 1,->,>=latex,blue] (d_3) edge[bend left=25] (d_4);
\path [draw=blue,line width= 1,->,>=latex,blue] (d_4) edge[bend left=45] (w_2);
\path [draw=blue,line width= 1,->,>=latex,blue] (w_2) edge[bend right=30] (w_3);

\path [draw=red,line width= 1,->,>=latex,red] (w_2) edge[bend left=50] (d_5);
\path [draw=red,line width= 1,->,>=latex,red] (d_5) edge[bend left=30] (c_5);
\path [draw=red,line width= 1,->,>=latex,red] (c_5) edge[bend right=15] (w_2);
\end{tikzpicture}
\caption{A visualization of the double-move for transforming $\C$ into $\C'$ when $CDG(\C, \C')$ consists of a single path and multiple cycles. The original $\CDG$ is given by the black edges, the first sequential move is given by the blue edges, and the second cyclical move is given by the red edges.}\label{fig:one_path}
\end{figure}

Even when paths and cycles are not completely disjoint, the corresponding moves may still be integrated together into a double-move. In the following lemma, we show that if a path intersects at most one cycle out of a collection of disjoint cycles, only two moves are required to apply all corresponding transfers.

\begin{lemma}\label{lem:one_cycle_intersect}
Let $\C, \C'$ be $k$-clusterings for which $CDG(\C, \C')$ consists of $s \geq 1$ disjoint cycles and a single path $P$, which intersects at most one of the cycles. Then $d(\C, \C') \leq 2$. 
\end{lemma}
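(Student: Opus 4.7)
If $P$ and $Y_1$ are disjoint, the claim is covered by Lemma~\ref{lem:disjoint_paths} with $t=1$. Otherwise, pick $v \in V(P) \cap V(Y_1)$; I will sketch the construction under the simplifying assumption that $V(P) \cap V(Y_1) = \{v\}$ and that $v = p_j$ with $1 < j < r$, so that $P = p_1 \to \cdots \to p_{j-1} \to v \to p_{j+1} \to \cdots \to p_r$ and $Y_1 = v \to w_1 \to \cdots \to w_{m-1} \to v$. The key structural observation is that $P \cup Y_1$ edge-decomposes into the two simple paths $P' = p_1 \to \cdots \to p_{j-1} \to v \to w_1 \to \cdots \to w_{m-1}$ and $Q = w_{m-1} \to v \to p_{j+1} \to \cdots \to p_r$, which share the interior vertex $v$ (and meet at the common endpoint $w_{m-1}$). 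Because $P'$ and $Q$ are not vertex-disjoint, Lemma~\ref{lem:disjoint_paths} with $t=2$ cannot be invoked directly, so a new construction is required.

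For each cycle $Y_i$ with $i \geq 2$, select an edge $e_i = (u_i, v_i)$. The plan is to mimic the weaving technique of the $t=2$ case of Lemma~\ref{lem:disjoint_paths}, treating $P'$ and $Q$ as the two paths. Move~1 is the sequential move on the simple path
\[
p_1 \to v_2 \to (Y_2 - e_2) \to u_2 \to v_3 \to \cdots \to u_s \to w_{m-1} \to v \to p_{j+1} \to \cdots \to p_r,
\]
where the jump edges $p_1 \to v_2$, $u_i \to v_{i+1}$ (for $2 \leq i < s$), and $u_s \to w_{m-1}$ misdirect items exactly as before, and the tail correctly applies the edges of $Q$. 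The crucial shift is that Move~2 is a \emph{cyclical} move rather than sequential, given by the simple cycle
\[
w_{m-1} \to v_s \to \cdots \to v_2 \to p_2 \to p_3 \to \cdots \to p_{j-1} \to v \to w_1 \to \cdots \to w_{m-2} \to w_{m-1},
\]
whose first segment corrects the misdirected items via the new edges $w_{m-1} \to v_s$, $v_i \to v_{i-1}$, and $v_2 \to p_2$, and whose second segment applies the unused edges of $P'$---the $P$-prefix $p_2 \to \cdots \to p_{j-1}$ and the $Y_1$-arc $v \to w_1 \to \cdots \to w_{m-1}$---which naturally returns the walk to $w_{m-1}$, closing the cycle.

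The main obstacle will be verifying simplicity of both moves. Move~1 visits $p_1$, the $Y_i$-vertices for $i \geq 2$, $w_{m-1}$, $v$, and $p_{j+1}, \ldots, p_r$ at most once, which follows from the pairwise disjointness of $Y_2, \ldots, Y_s$ from one another and from $P \cup Y_1$, together with $P \cap Y_1 = \{v\}$. Move~2 visits $w_{m-1}$ exactly twice (as the cycle's basepoint) and every other interior vertex exactly once, by the same reasoning. Tracking the transfers through both moves then shows that each edge of $\CDG$ is applied once with all misdirected items returned to their intended destinations, so $d(\C, \C') \leq 2$. The endpoint cases ($v = p_1$ or $v = p_r$) and the multi-intersection case require minor adjustments: for $v = p_1$, one begins Move~1 with the $Y_1$-arc before weaving through cycles and then applying the $P$-edges, while Move~2 becomes a cycle combining the corrections with the single leftover edge $v \to p_2$; for $V(P) \cap V(Y_1) \supsetneq \{v\}$, choose $v$ to be the first shared vertex along $P$ and observe that any further shared vertices lie on $P$'s suffix (handled by Move~1) and on the $Y_1$-arc (handled by Move~2), so each remains visited only once per move.
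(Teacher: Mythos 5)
Your double-move is a genuinely different construction from the paper's, and the comparison is instructive. The paper performs two sequential moves: Move~1 follows the $P$-prefix to the first shared vertex $v_1 = w_j$, then traverses all of $Y_1 - e_1$ and weaves through the remaining cycles; Move~2 corrects the misplacements ($u_s \to v_s \to \cdots \to v_1$) and then follows the $P$-suffix. Crucially, the only $Y_1$-vertex in Move~2 is $v_1$, which by choice lies off the $P$-suffix, so both moves are simple even when $P$ meets $Y_1$ many times. You instead split $Y_1$ between the moves (the one edge $e_1 = (w_{m-1}, v)$ in Move~1, the arc $v \to w_1 \to \cdots \to w_{m-1}$ in Move~2) and split $P$ the opposite way, making Move~2 cyclical rather than sequential. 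In the clean case of a single interior intersection this is correct, and it even avoids the temporary cluster-size increase at $u_s$ that the paper explicitly flags as a concern (the remark after the lemma).

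However, your multi-intersection argument has a genuine gap. Your Move~1 visits $w_{m-1}$ (as the target of the jump from $u_s$) and also visits all of $p_{j+1}, \ldots, p_r$; choosing $v$ to be the \emph{first} shared vertex only guarantees the \emph{prefix} $p_1, \ldots, p_{j-1}$ is disjoint from $Y_1$, and does nothing to keep $w_{m-1}$ out of the suffix. Concretely, take $P = p_1 \to p_2 \to p_3$, $Y_1 = p_2 \to a \to p_3 \to p_2$, and a second disjoint cycle $Y_2 = b \to c \to b$. Then $v = p_2$ and $w_{m-1} = p_3$, so your $Q = w_{m-1} \to v \to p_3 = p_3 \to p_2 \to p_3$ is not a simple path, and your Move~1 ($p_1 \to b \to c \to p_3 \to p_2 \to p_3$) revisits $p_3$. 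The claim that ``each remains visited only once per move'' is therefore false. The paper's split sidesteps this precisely because the weaving never enters the $Y_1$-vertex preceding $v$; if you want to keep your cyclical second move, you would need a different way to glue the weaving back into $P \cup Y_1$ that avoids $w_{m-1}$. The endpoint cases ($v = p_1$, $v = p_r$) are also only gestured at, and the ``single leftover edge $v \to p_2$'' description does not obviously close up into a valid cycle as claimed.
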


\begin{proof}
Let $Y_1,...,Y_s$ denote the cycles of $CDG(\C, \C')$. If $P$ does not intersect any of the cycles, we can apply Lemma~\ref{lem:disjoint_paths}, so assume $P$ intersects $Y_1$. 
 We may also assume $s \geq 2$, else the sequential and cyclical moves corresponding to $P$ and $Y_1$ could simply be applied individually. Let $w_1,...,w_t$ denote (in order) the vertices of $P$, and select an edge $e_i := (u_i, v_i)$ from each cycle $Y_i$. Assume $e_1$ is chosen such that $v_1$ is the first vertex $w_j$ of $P$ that also belongs to $Y_1$. (Note that it is possible to have $j=1$ or $j = t$, but both of those cases are still covered by the following construction.) We will apply two sequential moves to transform $\C$ into $\C'$. See Figure~\ref{fig:one_intersect} for a visualization of $CGD(\C, \C')$ along with the double-move.

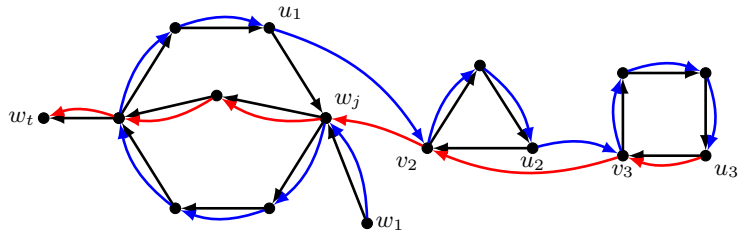
\begin{figure}
\centering
\begin{tikzpicture}[vertices/.style={draw, fill=black, circle, inner sep=0pt, minimum size = 4pt, outer sep=0pt}]

\node[vertices] (c_1) at (0,1.2) {};
\node[vertices] (c_2) at (0.75, 2.4) {};
\node[vertices] (c_3) at (2, 2.4) {};
\node[vertices] (c_4) at (2.75, 1.2) {};
\node[vertices] (c_5) at (2, 0) {};
\node[vertices] (c_6) at (0.75, 0) {};

\node[vertices] (w_1) at (3.3,-0.2) {};
\node[vertices] (w_2) at (2.75, 1.2) {};
\node[vertices] (w_3) at (1.3, 1.5) {};
\node[vertices] (w_4) at (0, 1.2) {};
\node[vertices] (w_5) at (-1, 1.2) {};

\node[vertices] (d_1) at (4.1,0.8) {};
\node[vertices] (d_2) at (4.8, 1.9) {};
\node[vertices] (d_3) at (5.5, 0.8) {};

\node[vertices] (e_1) at (6.7,0.7) {};
\node[vertices] (e_2) at (6.7, 1.8) {};
\node[vertices] (e_3) at (7.8, 1.8) {};
\node[vertices] (e_4) at (7.8, 0.7) {};

\foreach \to/\from in {
	w_1/w_2, w_2/w_3, w_3/w_4, w_4/w_5,
	c_1/c_2, c_2/c_3, c_3/c_4, c_4/c_5, c_5/c_6, c_6/c_1,
	d_1/d_2, d_2/d_3, d_3/d_1,
	e_1/e_2, e_2/e_3, e_3/e_4, e_4/e_1}
\draw[draw=black, line width= 1,  ->, >=latex]  (\to)--(\from);

\node[right] at (w_1) {$w_1$};
\node[above right] at (w_2) {$w_j$};
\node[left] at (w_5) {$w_t$};
\node[above right] at (c_3) {$u_1$};
\node[below left] at (d_1) {$v_2$};
\node[below] at (d_3) {$u_2$};
\node[below] at (e_1) {$v_3$};
\node[below right] at (e_4) {$u_3$};

\path [draw=blue,line width= 1,->,>=latex,blue] (w_1) edge[bend right=20] (w_2);
\path [draw=blue,line width= 1,->,>=latex,blue] (c_4) edge[bend left=20] (c_5);
\path [draw=blue,line width= 1,->,>=latex,blue] (c_5) edge[bend left=20] (c_6);
\path [draw=blue,line width= 1,->,>=latex,blue] (c_6) edge[bend left=20] (c_1);
\path [draw=blue,line width= 1,->,>=latex,blue] (c_1) edge[bend left=20] (c_2);
\path [draw=blue,line width= 1,->,>=latex,blue] (c_2) edge[bend left=20] (c_3);
\path [draw=blue,line width= 1,->,>=latex,blue] (c_3) edge[bend left=20] (d_1);
\path [draw=blue,line width= 1,->,>=latex,blue] (d_1) edge[bend left=20] (d_2);
\path [draw=blue,line width= 1,->,>=latex,blue] (d_2) edge[bend left=20] (d_3);
\path [draw=blue,line width= 1,->,>=latex,blue] (d_3) edge[bend left=20] (e_1);
\path [draw=blue,line width= 1,->,>=latex,blue] (e_1) edge[bend left=20] (e_2);
\path [draw=blue,line width= 1,->,>=latex,blue] (e_2) edge[bend left=20] (e_3);
\path [draw=blue,line width= 1,->,>=latex,blue] (e_3) edge[bend left=20] (e_4);

\path [draw=red,line width= 1,->,>=latex,red] (e_4) edge[bend left=20] (e_1);
\path [draw=red,line width= 1,->,>=latex,red] (e_1) edge[bend left=20] (d_1);
\path [draw=red,line width= 1,->,>=latex,red] (d_1) edge[bend right=10] (w_2);
\path [draw=red,line width= 1,->,>=latex,red] (w_2) edge[bend left=20] (w_3);
\path [draw=red,line width= 1,->,>=latex,red] (w_3) edge[bend left=20] (w_4);
\path [draw=red,line width= 1,->,>=latex,red] (w_4) edge[bend right=20] (w_5);

\end{tikzpicture}
\caption{A visualization of the double-move for transforming $\C$ into $\C'$ when $CDG(\C, \C')$ consists of multiple disjoint cycles and a path intersecting a single cycle.}\label{fig:one_intersect}
\end{figure}

For the first sequential move, start at $w_1$ and follow $P$ to $w_j = v_1$. Then follow the path formed by joining $Y_1 - e_1,...,Y_s - e_s$ with the introduced edges $(u_1, v_2),...,(u_{s-1}, v_s)$, terminating the move at $u_s$. Hence, the move reduces the size of the cluster corresponding to $w_1$, as desired, but it also increases the size of the cluster corresponding to $u_s$.

To correct this, apply a second sequential move starting at $u_s$ and terminating at $w_t$. First follow the edges $(u_s, v_s), (v_s, v_{s-1}),..., (v_2, v_1)$ to correct items misplaced across cycles. Next, since $v_1 = w_j$, follow $P$ along the vertices $w_j,...,w_t$ to complete the move. Since $P$ only intersects $Y_1$, no vertices are repeated in this second sequential move and it indeed corresponds to a single directed path in the $CDG$. All transfers corresponding to the original edges of $CDG(\C, \C')$ have then been correctly applied. \eoproof
\end{proof}

Note that in the double-move used for Lemma~\ref{lem:one_cycle_intersect}, the first sequential move temporarily increases the size of the cluster corresponding to $u_s$. For general $k$-clusterings this is not an issue, but for bounded-size $k$-clusterings this could potentially violate the upper bound on the size of the cluster. Unfortunately, this increase in cluster size is unavoidable for certain configurations of $\CDG$. Consider the example in Figure~\ref{fig:size_increase}. If only two moves are to be used to transform $\C$ into $\C'$, $c_1$ must correctly send an item to $c_2$ in each move. Hence, $c_1$ may not temporarily receive an item from $c_3$ or $c_4$ in the first move, and the cluster size corresponding to either $c_3$ or $c_4$ must be temporarily increased. Therefore, although $d(\C, \C') = 2$, the circuit distance between the corresponding vertices in a related bounded-size partition polytope may be 3 if the cluster sizes corresponding to $c_3$ and $c_4$ are already at their upper bounds. We further address this issue in Section~\ref{sec:diameter}.

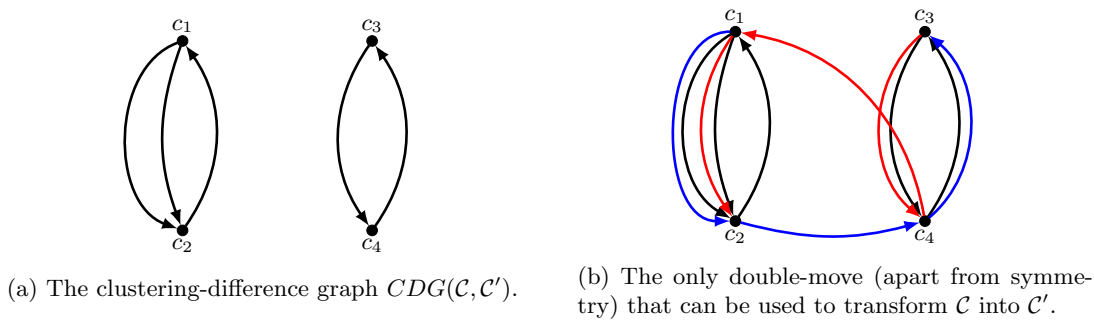
\begin{figure}
\centering
\begin{subfigure}{0.45 \textwidth}
\centering
\begin{tikzpicture}[vertices/.style={draw, fill=black, circle, inner sep=0pt, minimum size = 4pt, outer sep=0pt}, scale=1.4]

\node[vertices] (c_1) at (0,0) {};
\node[vertices] (c_2) at (0, 1.8) {};

\node[vertices] (c_3) at (1.8, 0) {};
\node[vertices] (c_4) at (1.8, 1.8) {};

\node[below] at (c_1) {$c_2$};
\node[above] at (c_2) {$c_1$};
\node[below] at (c_3) {$c_4$};
\node[above] at (c_4) {$c_3$};

\path [draw=black,line width= 1,->,>=latex] (c_1) edge[bend right=35] (c_2);
\path [draw=black,line width= 1,->,>=latex] (c_3) edge[bend right=35] (c_4);

\path [draw=black,line width= 1,->,>=latex] (c_2) edge[bend right=20] (c_1);
\path [draw=black,line width= 1,->,>=latex] (c_4) edge[bend right=35] (c_3);

\path [draw=black,line width= 1,->,>=latex] (c_2) edge[bend right=70] (c_1);

\end{tikzpicture}
\caption{The clustering-difference graph $\CDG$.}
\end{subfigure}
\qquad
\begin{subfigure}{0.45 \textwidth}
\centering
\begin{tikzpicture}[vertices/.style={draw, fill=black, circle, inner sep=0pt, minimum size = 4pt, outer sep=0pt}, scale=1.4]

\useasboundingbox (-0.25,-0.25) rectangle (2.15,2.15);

\node[vertices] (c_1) at (0,0) {};
\node[vertices] (c_2) at (0, 1.8) {};

\node[vertices] (c_3) at (1.8, 0) {};
\node[vertices] (c_4) at (1.8, 1.8) {};

\node[below] at (c_1) {$c_2$};
\node[above] at (c_2) {$c_1$};
\node[below] at (c_3) {$c_4$};
\node[above] at (c_4) {$c_3$};

\path [draw=black,line width= 1,->,>=latex] (c_1) edge[bend right=35] (c_2);
\path [draw=black,line width= 1,->,>=latex] (c_3) edge[bend right=35] (c_4);

\path [draw=black,line width= 1,->,>=latex] (c_2) edge[bend right=20] (c_1);
\path [draw=black,line width= 1,->,>=latex] (c_4) edge[bend right=35] (c_3);

\path [draw=black,line width= 1,->,>=latex] (c_2) edge[bend right=60] (c_1);

\path [draw=blue,line width= 1,->,>=latex,blue] (c_2) edge[bend right=90] (c_1);
\path [draw=blue,line width= 1,->,>=latex,blue] (c_1) edge[bend right=15] (c_3);
\path [draw=blue,line width= 1,->,>=latex,blue] (c_3) edge[bend right=50] (c_4);

\path [draw=red,line width= 1,->,>=latex,red] (c_4) edge[bend right=50] (c_3);
\path [draw=red,line width= 1,->,>=latex,red] (c_3) edge[bend right=35] (c_2);
\path [draw=red,line width= 1,->,>=latex,red] (c_2) edge[bend right=35] (c_1);

\end{tikzpicture}
\caption{The only double-move (apart from symmetry) that can be used to transform $\C$ into $\C'$.}
\end{subfigure}
\caption{A clustering-difference graph $\CDG$ for which the only way to transform $\C$ into $\C'$ in only two moves is to use the double-move from Lemma~\ref{lem:one_cycle_intersect}.}\label{fig:size_increase}
\end{figure}

When a path intersects multiple cycles from a set of disjoint cycles, integrating the corresponding moves as in the double-move from Lemma~\ref{lem:one_cycle_intersect} becomes more challenging since we can no longer guarantee that the second sequential move corresponds to a single directed path in the underlying $CDG$. However, we can ensure that transfers corresponding to at least the \textit{first} and \textit{last} edges of the path are applied in conjunction with the cycles. We show in the upcoming theorem that given any path $P=w_1...w_t$ and a set $\mathcal{Y}$ of disjoint cycles, a double-move can be used to correctly apply all transfers from $\mathcal{Y}$ while decreasing the cluster size corresponding to $w_1$ and increasing the cluster size corresponding to $w_t$. Furthermore, such a double-move will then allow us to completely integrate $P$ with $\mathcal{Y}$ as long as $P$ does not intersect the cycles of $\mathcal{Y}$ more than three times.

Recall that in a clustering-difference graph $\CDG$, the outdegree of a vertex is equal to the number of items which must be moved from the corresponding cluster to perform the clustering transformation. Note that in order for the outdegree to be reduced, a correct item must be \textit{sent} from the cluster to a new destination; however, this destination cluster need not actually be the other endpoint of the corresponding edge in $\CDG$. On the other hand, the indegree of a vertex gives the number of items which must be moved to the corresponding cluster. For the indegree to be reduced, a correct item must be \textit{received} by the vertex, but it does not matter which cluster actually sends the item. We call the minimum of the indegree and outdegree of a vertex the \textit{shared degree} of that vertex in the $CDG$. Hence, applying a set of disjoint cyclical moves reduces the shared degree of all covered vertices by one. When integrating a path with these cyclical moves, this reduction in shared degree should still occur in order to make all desired improvements to the underlying $CDG$. In the following theorem, we provide four different double-moves to accomplish this task. The type of double-move to use depends on the intersection points of the path with the cycles.

\begin{theorem}\label{thm:two_exchanges}
Let $\C, \C'$ be $k$-clusterings with clustering-difference graph $D := CDG(\C, \C')$, let $\mathcal{Y} = Y_1,...,Y_s$ be a set of disjoint cycles in $D$, and let $P = w_1...w_t$ be a path in $D$ that is edge-disjoint from $\mathcal{Y}$. There exists a double-move which accomplishes all of the following:
\begin{enumerate}
    \item Correctly applies all transfers from $\mathcal{Y}$
    \item Reduces the cluster size corresponding to $w_1$ through sending a correct item
    \item Increases the cluster size corresponding to $w_t$ through receiving a correct item
    \item Decreases the shared degree of each vertex covered by $\mathcal{Y}$ by at least one.
\end{enumerate}
\end{theorem}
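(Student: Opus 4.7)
The plan is to generalize the double-moves of Lemmas~\ref{lem:disjoint_paths} and~\ref{lem:one_cycle_intersect} by choosing, for each cycle $Y_i$, a distinguished edge $e_i := (u_i, v_i)$ and then stitching the arcs $Y_1 - e_1, \ldots, Y_s - e_s$ together using two sequential moves---the first starting at $w_1$ and the second ending at $w_t$. Informally, the first move starts at $w_1$, enters $Y_1$ at $v_1$ via an introduced edge, traverses $Y_1 - e_1$ to $u_1$, hops into $Y_2$ via an introduced edge $(u_1, v_2)$, continues in this way through all cycles, and exits $Y_s$ at $u_s$ into some intermediate vertex. The second move begins at that vertex, follows introduced edges through $v_s, v_{s-1}, \ldots, v_1$ to correct the items misdirected in the first move, and ends at $w_t$ by delivering a correct item.

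The construction is adjusted slightly depending on whether $w_1$ (resp.\ $w_t$) lies on some cycle of $\mathcal{Y}$, giving four variants. If $w_1 \in Y_j$, I relabel so $j = 1$ and pick $e_1$ with $v_1 = w_1$; then the first move can begin directly along $Y_1 - e_1$ with no introduced starting edge, which also prevents it from revisiting $w_1$. Symmetrically, if $w_t \in Y_{j'}$, I relabel so $j' = s$ and pick $e_s$ with $u_s = w_t$, letting the second move end along $Y_s - e_s$ into $w_t$. In all variants, conditions (1) and (4) follow from the observation that the union of the two moves applies, for each cycle $Y_i$, every non-$e_i$ transfer directly and reproduces the $e_i$-transfer as the net effect of the introduced correction edges at the $v_j$'s, so each cycle vertex has both its in- and out-degree in the updated $CDG$ reduced by one. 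Conditions (2) and (3) are built into the shape of the two sequential moves: the first sends out of $w_1$ an item that $w_1$ must lose under $\C'$, and the second delivers to $w_t$ an item that $w_t$ must gain under $\C'$.

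The principal obstacle is ensuring that each move is a single directed walk without repeated vertices, so that each corresponds to an actual circuit of $\PP$. The vertex-disjointness of the cycles in $\mathcal{Y}$ and the edge-disjointness of $P$ from $\mathcal{Y}$ make this automatic in the first three variants. The delicate case is the last, where both $w_1$ and $w_t$ lie on cycles of $\mathcal{Y}$, and especially when they lie on the \emph{same} cycle: then the constraints $v_1 = w_1$ and $u_s = w_t$ must be satisfied simultaneously on that shared cycle, and it must be split between the two moves without creating a vertex revisit. I would handle this by orienting the traversal of the shared cycle so that $w_1$ precedes $w_t$, using the $w_1$-to-$w_t$ arc in the first move and the complementary $w_t$-to-$w_1$ arc (reached via the introduced correction edges) in the second.
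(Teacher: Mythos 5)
Your overall shape---choose a distinguished edge $e_i = (u_i, v_i)$ from each cycle, stitch $Y_1 - e_1, \ldots, Y_s - e_s$ together in a first move, then thread back through $v_s, v_{s-1}, \ldots, v_1$ in a second move to correct the items misplaced across cycles---matches the paper's construction. But there is a genuine gap in how you identify and handle the hard cases, and your claim that vertex-repeats are ``automatic'' in three of your four variants is false.

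The obstacle you need to overcome is not whether $w_1$ or $w_t$ lies on a cycle of $\mathcal{Y}$, but rather where the \emph{interior} of $P$ meets $\mathcal{Y}$. Edge-disjointness of $P$ from $\mathcal{Y}$ and vertex-disjointness of the cycles among themselves do \emph{not} prevent $P$ from passing through cycle vertices at arbitrary positions. For a concrete failure: take $P = w_1 w_2 w_3 w_4 w_5$ with $w_1, w_5 \notin \mathcal{Y}$ but $w_2 \in Y_1$, $w_3 \in Y_2$, $w_4 \in Y_1$. By your case split this is the ``easy'' variant (neither endpoint on a cycle), so you would have the second move run from $u_s$ through $v_s, \ldots, v_1$ and then follow $P$ to $w_t = w_5$. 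But the tail of $P$ re-enters $Y_1$ and $Y_2$, so that move is not a simple directed path and does not correspond to a single circuit. This is exactly why Lemma~\ref{lem:one_cycle_intersect} (which you invoke) is restricted to $P$ meeting at most one cycle, and why the general case needs a new argument.

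The paper's fix is to parameterize by the intersection pattern of $P$ with $\mathcal{Y}$, introducing $w_{i_1}$, $w_{i_2}$, $w_{i_3}$ as the first, second-to-last, and last vertices of $P$ covered by $\mathcal{Y}$, and then to traverse only the ``safe'' segments of $P$ (from $w_1$ to $w_{i_1}$ and from $w_{i_2}$ through $w_{i_3}$ to $w_t$), skipping the possibly heavily-intersecting interior between $w_{i_1}$ and $w_{i_2}$ by routing through these three vertices directly. The four cases are then determined by which of $w_{i_1}, w_{i_2}, w_{i_3}$ share cycles, and---also missing from your proposal---the type of each move changes accordingly: Cases~1 and~3 use a cyclical move followed by a sequential move, Case~2 a sequential move followed by a cyclical move, and only Case~4 uses two sequential moves. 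Getting those types right is what lets each move close up into a simple cycle or stay a simple path. Your proposal needs the $w_{i_1}, w_{i_2}, w_{i_3}$ case analysis and the accompanying choice of move types before it constitutes a proof.
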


\begin{proof}
If $s = 1$ or if $P$ is disjoint from $\mathcal{Y}$, we can apply Lemma~\ref{lem:one_cycle_intersect}, so assume $s \geq 2$. Let $w_{i_1}$ denote the first vertex of $P$ covered by $\mathcal{Y}$, and let $w_{i_2}, w_{i_3}$ denote, respectively, the second-to-last and last vertex of $P$ covered by $\mathcal{Y}$. Note that we have $w_{i_1} = w_{i_2}$ when $P$ intersects $\mathcal{Y}$ only twice. Similarly, if $P$ intersects $\mathcal{Y}$ only once, we let $w_{i_1} = w_{i_2} = w_{i_3}$. We treat four exhaustive cases regarding the distribution of $w_{i_1}$, $w_{i_2}$, and $w_{i_3}$ across the cycles of $\mathcal{Y}$. 

\vspace{.1in }
\textbf{Case 1:} $w_{i_1}$ and $w_{i_3}$ belong to different cycles of $\mathcal{Y}$ and $w_{i_2}$ belongs to the same cycle as $w_{i_3}$. We apply a cyclical move followed by a sequential move to perform all necessary transfers in $D$. See Figure~\ref{fig:case_1} for a visualization of the double-move.

Without loss of generality, assume $w_{i_1} \in Y_1$ and $w_{i_2}, w_{i_3} \in Y_s$. Choose an edge $e_i := (u_i, v_i)$ from each cycle $Y_i$. However, choose $e_1$ and $e_s$ so that $u_1 = w_{i_1}$ and $v_s = w_{i_2}$. For the cyclical move, first introduce an edge from $w_{i_1}$ to $w_{i_2} = v_s$ whose item corresponds to the item sent from $w_{i_1}$ in $P$. (Note that we might have $w_{i_1} = w_{i_2 - 1}$, in which case the edge already exists in $\CDG$.) Next, follow $Y_s$ from $v_s$ to $u_s$. Introduce an edge from $u_s$ to $v_{s-1}$ whose item corresponds to that of $e_s$, and then travel along $Y_{s-1} - e_{s-1}$ until $u_{s-1}$ is reached. Repeat for the remaining cycles by introducing edges $(u_i, v_{i-1})$ and traveling along $Y_{i-1} - e_{i-1}$ until $u_1 = w_{i_1}$ is reached to complete the cyclical move.

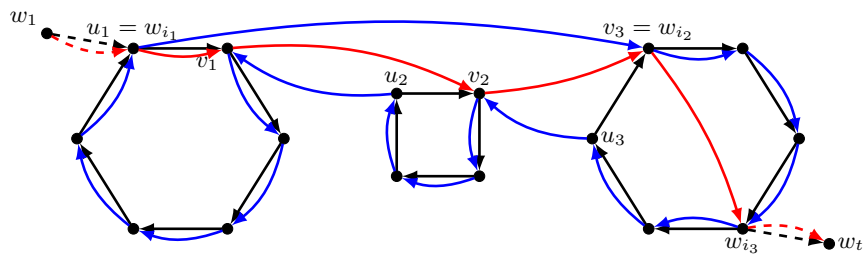
\begin{figure}
\centering
\begin{tikzpicture}[vertices/.style={draw, fill=black, circle, inner sep=0pt, minimum size = 4pt, outer sep=0pt}]

\node[vertices] (c_1) at (0,1.2) {};
\node[vertices] (c_2) at (0.75, 2.4) {};
\node[vertices] (c_3) at (2, 2.4) {};
\node[vertices] (c_4) at (2.75, 1.2) {};
\node[vertices] (c_5) at (2, 0) {};
\node[vertices] (c_6) at (0.75, 0) {};

\node[vertices] (e_1) at (4.25,0.7) {};
\node[vertices] (e_2) at (4.25, 1.8) {};
\node[vertices] (e_3) at (5.35, 1.8) {};
\node[vertices] (e_4) at (5.35, 0.7) {};

\node[vertices] (d_1) at (6.85,1.2) {};
\node[vertices] (d_2) at (7.6, 2.4) {};
\node[vertices] (d_3) at (8.85, 2.4) {};
\node[vertices] (d_4) at (9.6, 1.2) {};
\node[vertices] (d_5) at (8.85, 0) {};
\node[vertices] (d_6) at (7.6, 0) {};

\node[vertices] (w_1) at (-0.4,2.6) {};
\node[vertices] (w_2) at (10.0, -0.2) {};

\foreach \to/\from in {
	c_1/c_2, c_2/c_3, c_3/c_4, c_4/c_5, c_5/c_6, c_6/c_1,
	d_1/d_2, d_2/d_3, d_3/d_4, d_4/d_5, d_5/d_6, d_6/d_1,
	e_1/e_2, e_2/e_3, e_3/e_4, e_4/e_1}
\draw[draw=black, line width= 1,  ->, >=latex]  (\to)--(\from);

\path [draw=black,dashed,line width= 1,->,>=latex,black] (w_1) edge (c_2);
\path [draw=black,dashed,line width= 1,->,>=latex,black] (d_5) edge (w_2);

\node[above left] at (w_1) {$w_1$};
\node[right] at (w_2) {$w_t$};
\node[above] at (c_2) {$u_1 = w_{i_1}$};
\node[below left] at (c_3) {$v_1$};
\node[below] at (d_5) {$w_{i_3}$};
\node[right] at (d_1) {$u_3$};
\node[above] at (e_2) {$u_2$};
\node[above] at (e_3) {$v_2$};
\node[above] at (d_2) {$v_3 = w_{i_2}$};

\path [draw=blue,line width= 1,->,>=latex,blue] (c_2) edge[bend left=10] (d_2);
\path [draw=blue,line width= 1,->,>=latex,blue] (d_2) edge[bend right=20] (d_3);
\path [draw=blue,line width= 1,->,>=latex,blue] (d_3) edge[bend left=20] (d_4);
\path [draw=blue,line width= 1,->,>=latex,blue] (d_4) edge[bend left=20] (d_5);
\path [draw=blue,line width= 1,->,>=latex,blue] (d_5) edge[bend right=20] (d_6);
\path [draw=blue,line width= 1,->,>=latex,blue] (d_6) edge[bend left=20] (d_1);
\path [draw=blue,line width= 1,->,>=latex,blue] (d_1) edge[bend left=20] (e_3);
\path [draw=blue,line width= 1,->,>=latex,blue] (e_3) edge[bend right=20] (e_4);
\path [draw=blue,line width= 1,->,>=latex,blue] (e_4) edge[bend left=20] (e_1);
\path [draw=blue,line width= 1,->,>=latex,blue] (e_1) edge[bend left=20] (e_2);
\path [draw=blue,line width= 1,->,>=latex,blue] (e_2) edge[bend left=20] (c_3);
\path [draw=blue,line width= 1,->,>=latex,blue] (c_3) edge[bend right=20] (c_4);
\path [draw=blue,line width= 1,->,>=latex,blue] (c_4) edge[bend left=20] (c_5);
\path [draw=blue,line width= 1,->,>=latex,blue] (c_5) edge[bend left=20] (c_6);
\path [draw=blue,line width= 1,->,>=latex,blue] (c_6) edge[bend left=20] (c_1);
\path [draw=blue,line width= 1,->,>=latex,blue] (c_1) edge[bend right=20] (c_2);

\path [draw=red,dashed,line width= 1,->,>=latex,red] (w_1) edge[bend right=20] (c_2);
\path [draw=red,line width= 1,->,>=latex,red] (c_2) edge[bend right=15] (c_3);
\path [draw=red,line width= 1,->,>=latex,red] (c_3) edge[bend left=15] (e_3);
\path [draw=red,line width= 1,->,>=latex,red] (e_3) edge[bend right=10] (d_2);
\path [draw=red,line width= 1,->,>=latex,red] (d_2) edge[bend left=10] (d_5);
\path [draw=red,dashed,line width= 1,->,>=latex,red] (d_5) edge[bend left=20] (w_2);

\end{tikzpicture}
\caption{The double-move for Case 1 of Theorem~\ref{thm:two_exchanges}. The cycles of $\mathcal{Y}$ are given by the solid black edges, the first cyclical move is given by the blue edges, and the second sequential move is given by the red edges. The dashed edges indicate the sections of $P$ that exist when $w_1 \neq w_{i_1}$ or $w_{t} \neq w_{i_3}$. Note that when $w_{i_2} \neq w_{i_3 - 1}$, the second sequential move will include additional vertices of $P$ between $w_{i_2}$ and $w_{i_3}$ which are not covered by $\mathcal{Y}$.}\label{fig:case_1}
\end{figure}

Next, start a sequential move at $w_1$, following $P$ to $w_{i_1} = u_1$. Use the edge $(u_1, v_1)$. Then follow the edges $(v_i, v_{i+1})$ for $i = 1,...,s-1$ until $v_s = w_{i_2}$ is reached to correct items misplaced across the cycles. Complete the sequential move by following $P$ from $w_{i_2}$ to $w_{i_3}$ and then to $w_t$.

We now prove that the desired changes have been made to the underlying $CDG$. Clearly the cluster size corresponding to $w_1$ is decreased and the cluster size corresponding to $w_t$ is increased via the second sequential move. Furthermore, each item sent by $w_1$ or received by $w_t$ is correct. As in the double-move from Figure~\ref{fig:disjoint_cycles}, all edges from $\mathcal{Y}$ are applied through the combination of the two moves. Thus, it suffices to show that the shared degree of each vertex covered by $\mathcal{Y}$ has been reduced by at least one. The only interesting cases are $w_{i_1}$, $w_{i_2}$, and $w_{i_3}$. In the first cyclical move, $w_{i_1}$ both receives and sends a correct item, reducing its shared degree by one. In the following sequential move, either $w_{i_1}$ only sends a correct item (if $w_{i_1} = w_1$) or $w_{i_1}$ both sends and receives a correct item, so the net reduction in shared degree of $w_{i_1}$ is at least one. Similarly, $w_{i_3}$ sends and receives a correct item in the first move and then either receives a correct item or both receives and sends a correct item in the second move.

Finally, consider $w_{i_2}$. The vertex receives a possibly incorrect item from $w_{i_1}$ but also sends a correct item to its neighbor on $Y_s$ via the first cyclical move, leaving its shared degree, at worst, unchanged. In the second sequential move, $w_{i_2}$ receives a correct item originating from $u_s$ and then sends a correct item to the following vertex on $P$. Thus, the shared degree of $w_{i_2}$ is also reduced by at least one, as desired.

\vspace{.1in}

\textbf{Case 2:} $w_{i_1}$ and $w_{i_3}$ belong to different cycles of $\mathcal{Y}$ and $w_{i_2}$ does not belong to the same cycle as $w_{i_3}$. ($w_{i_2}$ may or may not belong to the same cycle as $w_{i_1}$, or we may have $w_{i_1} = w_{i_2}$.) We apply a sequential move followed by a cyclical move to perform the necessary transfers. See Figure~\ref{fig:case_2} for a visualization of the double-move.

Assume $w_{i_1} \in Y_1$ and $w_{i_3} \in Y_s$. Choose an edge $e_i := (u_i, v_i)$ from each $Y_i$ such that $v_1 = w_{i_1}$, $u_s = w_{i_3}$, and $v_i \neq w_{i_2}$ for $i = 2,...,s-1$.  First, travel along $P$ from $w_1$ to $w_{i_1}$. Then for $i = 1,...,s-1$, travel along $Y_i - e_i$ and follow an introduced edge $(u_i, v_{i+1})$. Finish the sequential move by following $Y_s - e_s$ from $v_s$ to $u_s = w_{i_3}$ and then following $P$ from $w_{i_3}$ to $w_t$.

\begin{figure}
\centering
\begin{tikzpicture}[vertices/.style={draw, fill=black, circle, inner sep=0pt, minimum size = 4pt, outer sep=0pt}]

\node[vertices] (c_1) at (0,1.2) {};
\node[vertices] (c_2) at (0.75, 2.4) {};
\node[vertices] (c_3) at (2, 2.4) {};
\node[vertices] (c_4) at (2.75, 1.2) {};
\node[vertices] (c_5) at (2, 0) {};
\node[vertices] (c_6) at (0.75, 0) {};

\node[vertices] (e_1) at (4.25,0.7) {};
\node[vertices] (e_2) at (4.25, 1.8) {};
\node[vertices] (e_3) at (5.35, 1.8) {};
\node[vertices] (e_4) at (5.35, 0.7) {};

\node[vertices] (d_1) at (6.85,1.2) {};
\node[vertices] (d_2) at (7.6, 2.4) {};
\node[vertices] (d_3) at (8.85, 2.4) {};
\node[vertices] (d_4) at (9.6, 1.2) {};
\node[vertices] (d_5) at (8.85, 0) {};
\node[vertices] (d_6) at (7.6, 0) {};

\node[vertices] (w_1) at (3.15,-0.2) {};
\node[vertices] (w_2) at (10.0, -0.2) {};

\foreach \to/\from in {
	c_1/c_2, c_2/c_3, c_3/c_4, c_4/c_5, c_5/c_6, c_6/c_1,
	d_1/d_2, d_2/d_3, d_3/d_4, d_4/d_5, d_5/d_6, d_6/d_1,
	e_1/e_2, e_2/e_3, e_3/e_4, e_4/e_1}
\draw[draw=black, line width= 1,  ->, >=latex]  (\to)--(\from);

\path [draw=black,dashed,line width= 1,->,>=latex,black] (w_1) edge[bend right=5] (c_5);
\path [draw=black,dashed,line width= 1,->,>=latex,black] (d_5) edge (w_2);

\node[below right] at (w_1) {$w_1$};
\node[below right] at (w_2) {$w_t$};
\node[below] at (c_5) {$w_{i_1}$};
\node[left] at (c_4) {$u_1$};
\node[below] at (d_5) {$w_{i_3}$};
\node[below] at (d_6) {$v_3$};
\node[below] at (e_4) {$u_2$};
\node[below] at (e_1) {$v_2$};
\node[above right] at (c_3) {$w_{i_2}$};

\path [draw=blue,dashed,line width= 1,->,>=latex,blue] (w_1) edge[bend right=25] (c_5);
\path [draw=blue,line width= 1,->,>=latex,blue] (c_5) edge[bend right=20] (c_6);
\path [draw=blue,line width= 1,->,>=latex,blue] (c_6) edge[bend left=20] (c_1);
\path [draw=blue,line width= 1,->,>=latex,blue] (c_1) edge[bend left=20] (c_2);
\path [draw=blue,line width= 1,->,>=latex,blue] (c_2) edge[bend left=20] (c_3);
\path [draw=blue,line width= 1,->,>=latex,blue] (c_3) edge[bend left=20] (c_4);
\path [draw=blue,line width= 1,->,>=latex,blue] (c_4) edge[bend left=10] (e_1);
\path [draw=blue,line width= 1,->,>=latex,blue] (e_1) edge[bend left=20] (e_2);
\path [draw=blue,line width= 1,->,>=latex,blue] (e_2) edge[bend left=20] (e_3);
\path [draw=blue,line width= 1,->,>=latex,blue] (e_3) edge[bend left=20] (e_4);
\path [draw=blue,line width= 1,->,>=latex,blue] (e_4) edge[bend left=10] (d_6);
\path [draw=blue,line width= 1,->,>=latex,blue] (d_6) edge[bend left=20] (d_1);;
\path [draw=blue,line width= 1,->,>=latex,blue] (d_1) edge[bend left=20] (d_2);
\path [draw=blue,line width= 1,->,>=latex,blue] (d_2) edge[bend left=20] (d_3);
\path [draw=blue,line width= 1,->,>=latex,blue] (d_3) edge[bend left=20] (d_4);
\path [draw=blue,line width= 1,->,>=latex,blue] (d_4) edge[bend left=20] (d_5);
\path [draw=blue,dashed,line width= 1,->,>=latex,blue] (d_5) edge[bend left=20] (w_2);

\path [draw=red,line width= 1,->,>=latex,red] (d_5) edge[bend right=20] (d_6);
\path [draw=red,line width= 1,->,>=latex,red] (d_6) edge[bend left=10] (e_1);
\path [draw=red,line width= 1,->,>=latex,red] (e_1) edge[bend right=10] (c_5);
\path [draw=red,line width= 1,->,>=latex,red] (c_5) edge[bend left=10] (c_3);
\path [draw=red,line width= 1,->,>=latex,red] (c_3) edge[bend left=25] (d_5);

\end{tikzpicture}
\caption{The double-move for Case 2 of Theorem~\ref{thm:two_exchanges}.}\label{fig:case_2}
\end{figure}
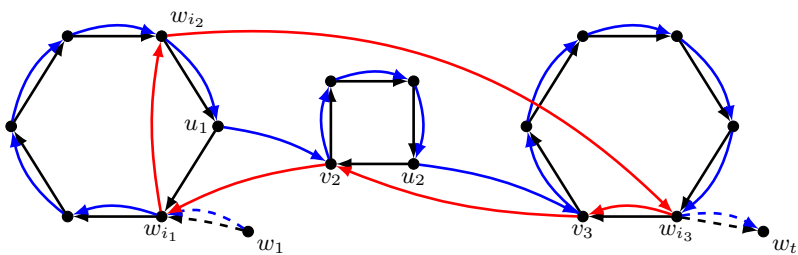

Start constructing the following cyclical move at $w_{i_3} = u_s$ using the edge $(u_s, v_s)$. Then for $i = s,...2$, follow the edges $(v_i, v_{i-1})$ until $v_1 = w_{i_1}$ is reached to correct items misplaced across the cycles. Now introduce (if needed) an edge from $w_{i_1}$ to $w_{i_2}$ whose item corresponds to the item sent from $w_{i_1}$ in $P$. Follow this edge to $w_{i_2}$, and then complete the cyclical move by following $P$ from $w_{i_2}$ to $w_{i_3}$. Note that this is indeed a single cyclical move since $w_{i_2} \neq v_i$ for $i = 2,...,s$.

The first sequential move alters the cluster sizes corresponding to $w_1$ and $w_t$ through correct transfers as desired, and again the edges of $\mathcal{Y}$ are all applied through the combination of the two moves. It suffices to show that the double-move reduces the shared degree of all vertices covered by $\mathcal{Y}$. However, this again follows from the argument used in the previous case: although $w_{i_2}$ may receive an incorrect item from $w_{i_1}$, it sends away two correct items and its shared degree is reduced by at least one.

\vspace{.1in}

\textbf{Case 3}: $w_{i_1}$ and $w_{i_3}$ belong to the same cycle of $\mathcal{Y}$ but $w_{i_2}$ belongs to a different cycle. Assume $w_{i_1}, w_{i_3} \in Y_1$ and $w_{i_2} \in Y_s$. We apply a cyclical move followed by a sequential move to perform the necessary transfers. See Figure~\ref{fig:case_3} for a visualization of the double-move.

Choose an edge $e_i := (u_i, v_i)$ from each cycle $Y_i$ such that $v_1 = w_{i_3}$ and $u_s = w_{i_2}$. For the cyclical move, first travel along $Y_1 - e_1$ from $v_1 = w_{i_3}$ to $u_1$. Then for $i = 1,...,s-1$, introduce and follow an edge $(u_i, v_{i+1})$ and follow $Y_{i+1} - e_{i+1}$ to $u_{i+1}$. Once $u_s = w_{i_2}$ is reached, follow $P$ from $w_{i_2}$ to $w_{i_3}$ to complete the cyclical move.

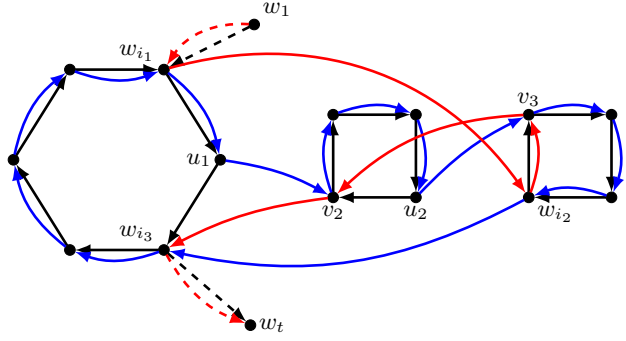
\begin{figure}
\centering
\begin{tikzpicture}[vertices/.style={draw, fill=black, circle, inner sep=0pt, minimum size = 4pt, outer sep=0pt}]

\node[vertices] (c_1) at (0,1.2) {};
\node[vertices] (c_2) at (0.75, 2.4) {};
\node[vertices] (c_3) at (2, 2.4) {};
\node[vertices] (c_4) at (2.75, 1.2) {};
\node[vertices] (c_5) at (2, 0) {};
\node[vertices] (c_6) at (0.75, 0) {};

\node[vertices] (e_1) at (4.25,0.7) {};
\node[vertices] (e_2) at (4.25, 1.8) {};
\node[vertices] (e_3) at (5.35, 1.8) {};
\node[vertices] (e_4) at (5.35, 0.7) {};

\node[vertices] (d_1) at (6.85,0.7) {};
\node[vertices] (d_2) at (6.85, 1.8) {};
\node[vertices] (d_3) at (7.95, 1.8) {};
\node[vertices] (d_4) at (7.95, 0.7) {};

\node[vertices] (w_1) at (3.2,3) {};
\node[vertices] (w_2) at (3.15,-1) {};

\foreach \to/\from in {
	c_1/c_2, c_2/c_3, c_3/c_4, c_4/c_5, c_5/c_6, c_6/c_1,
	d_1/d_2, d_2/d_3, d_3/d_4, d_4/d_1,
	e_1/e_2, e_2/e_3, e_3/e_4, e_4/e_1}
\draw[draw=black, line width= 1,  ->, >=latex]  (\to)--(\from);

\path [draw=black,dashed,line width= 1,->,>=latex,black] (w_1) edge (c_3);
\path [draw=black,dashed,line width= 1,->,>=latex,black] (c_5) edge (w_2);

\node[above right] at (w_1) {$w_1$};
\node[right] at (w_2) {$w_t$};
\node[above left] at (c_3) {$w_{i_1}$};
\node[left] at (c_4) {$u_1$};
\node[above left] at (c_5) {$w_{i_3}$};;
\node[below] at (e_4) {$u_2$};
\node[below] at (e_1) {$v_2$};
\node[below right] at (d_1) {$w_{i_2}$};
\node[above] at (d_2) {$v_3$};

\path [draw=blue,line width= 1,->,>=latex,blue] (c_5) edge[bend left=20] (c_6);
\path [draw=blue,line width= 1,->,>=latex,blue] (c_6) edge[bend left=20] (c_1);
\path [draw=blue,line width= 1,->,>=latex,blue] (c_1) edge[bend left=20] (c_2);
\path [draw=blue,line width= 1,->,>=latex,blue] (c_2) edge[bend right=20] (c_3);
\path [draw=blue,line width= 1,->,>=latex,blue] (c_3) edge[bend left=20] (c_4);
\path [draw=blue,line width= 1,->,>=latex,blue] (c_4) edge[bend left=10] (e_1);
\path [draw=blue,line width= 1,->,>=latex,blue] (e_1) edge[bend left=20] (e_2);
\path [draw=blue,line width= 1,->,>=latex,blue] (e_2) edge[bend left=20] (e_3);
\path [draw=blue,line width= 1,->,>=latex,blue] (e_3) edge[bend left=20] (e_4);
\path [draw=blue,line width= 1,->,>=latex,blue] (e_4) edge[bend left=10] (d_2);
\path [draw=blue,line width= 1,->,>=latex,blue] (d_2) edge[bend left=20] (d_3);
\path [draw=blue,line width= 1,->,>=latex,blue] (d_3) edge[bend left=20] (d_4);
\path [draw=blue,line width= 1,->,>=latex,blue] (d_4) edge[bend right=20] (d_1);
\path [draw=blue,line width= 1,->,>=latex,blue] (d_1) edge[bend left=20] (c_5);

\path [draw=red,dashed,line width= 1,->,>=latex,red] (w_1) edge[bend right=30] (c_3);
\path [draw=red,line width= 1,->,>=latex,red] (c_3) edge[bend left=35] (d_1);
\path [draw=red,line width= 1,->,>=latex,red] (d_1) edge[bend right=20] (d_2);
\path [draw=red,line width= 1,->,>=latex,red] (d_2) edge[bend right=20] (e_1);
\path [draw=red,line width= 1,->,>=latex,red] (e_1) edge[bend right=10] (c_5);
\path [draw=red,dashed,line width= 1,->,>=latex,red] (c_5) edge[bend right=20] (w_2);

\end{tikzpicture}
\caption{The double-move for Case 3 of Theorem~\ref{thm:two_exchanges}.}\label{fig:case_3}
\end{figure}

Start the sequential move by following $P$ from $w_1$ to $w_{i_1}$. Next, introduce (if needed) an edge from $w_{i_1}$ to $w_{i_2} = u_s$ whose item corresponds to the item sent from $w_{i_1}$ in $P$. Follow this edge and then follow the edge $(u_s, v_s)$. Next, for $i = s,...,2$, follow the edge $(v_i, v_{i-1})$ to correct items misplaced among the cycles. Once $v_1 = w_{i_3}$ is reached, finish the sequential move by following $P$ from $w_{i_3}$ to $w_t$. All clusters are then changed as desired using the arguments from Case 1.

\vspace{.1in}

\textbf{Case 4}: $w_{i_1}$, $w_{i_2}$, and $ w_{i_3}$ belong to the same cycle in $\mathcal{Y}$ (allowing for either $w_{i_1} = w_{i_2}$ or $w_{i_1} = w_{i_2} = w_{i_3}$).
We apply two sequential moves similar to those of Lemma~\ref{lem:one_cycle_intersect} to perform all necessary transfers. See Figure~\ref{fig:case_4} for a visualization of the double-move. 

Assume $w_{i_1}, w_{i_2}, w_{i_3}\in Y_1$.  Choose an edge $e_i := (u_i, v_i)$ from each cycle $Y_i$, choosing $e_1$ so that $v_1 = w_{i_1}$. For the first sequential move, first follow $P$ from $w_1$ to $w_{i_1} = v_1$ and then travel along $Y_1 - e_1$ to $u_1$. Then for $i = 1,...,s-1$, introduce the edge $(u_i, v_{i+1})$ and travel along $Y_i - e_i$. Terminate once $u_s$ is reached.

\begin{figure}
\centering
\begin{tikzpicture}[vertices/.style={draw, fill=black, circle, inner sep=0pt, minimum size = 4pt, outer sep=0pt}]

\node[vertices] (c_1) at (0,1.2) {};
\node[vertices] (c_2) at (0.75, 2.4) {};
\node[vertices] (c_3) at (2, 2.4) {};
\node[vertices] (c_4) at (2.75, 1.2) {};
\node[vertices] (c_5) at (2, 0) {};
\node[vertices] (c_6) at (0.75, 0) {};

\node[vertices] (w_1) at (3.3,-0.2) {};
\node[vertices] (w_2) at (2.75, 1.2) {};
\node[vertices] (w_4) at (0, 1.2) {};
\node[vertices] (w_5) at (-0.6, -0.2) {};

\node[vertices] (d_1) at (4.1,0.8) {};
\node[vertices] (d_2) at (4.8, 1.9) {};
\node[vertices] (d_3) at (5.5, 0.8) {};

\node[vertices] (e_1) at (6.7,0.7) {};
\node[vertices] (e_2) at (6.7, 1.8) {};
\node[vertices] (e_3) at (7.8, 1.8) {};
\node[vertices] (e_4) at (7.8, 0.7) {};

\foreach \to/\from in {
	c_1/c_2, c_2/c_3, c_3/c_4, c_4/c_5, c_5/c_6, c_6/c_1,
	d_1/d_2, d_2/d_3, d_3/d_1,
	e_1/e_2, e_2/e_3, e_3/e_4, e_4/e_1}
\draw[draw=black, line width= 1,  ->, >=latex]  (\to)--(\from);

\node[right] at (w_1) {$w_1$};
\node[above right] at (w_2) {$w_{i_1}$};
\node[left] at (w_5) {$w_t$};
\node[above] at (c_3) {$u_1$};
\node[below left] at (d_1) {$v_2$};
\node[below] at (d_3) {$u_2$};
\node[below] at (e_1) {$v_3$};
\node[below right] at (e_4) {$u_3$};
\node[below] at (c_6) {$w_{i_3}$};
\node[above left] at (c_2) {$w_{i_2}$};

\path [draw=black,dashed,line width= 1,->,>=latex,black] (w_1) edge (w_2);
\path [draw=black,dashed,line width= 1,->,>=latex,black] (c_6) edge (w_5);

\path [draw=blue,dashed,line width= 1,->,>=latex,blue] (w_1) edge[bend right=20] (w_2);
\path [draw=blue,line width= 1,->,>=latex,blue] (c_4) edge[bend left=20] (c_5);
\path [draw=blue,line width= 1,->,>=latex,blue] (c_5) edge[bend left=20] (c_6);
\path [draw=blue,line width= 1,->,>=latex,blue] (c_6) edge[bend left=20] (c_1);
\path [draw=blue,line width= 1,->,>=latex,blue] (c_1) edge[bend left=20] (c_2);
\path [draw=blue,line width= 1,->,>=latex,blue] (c_2) edge[bend left=20] (c_3);
\path [draw=blue,line width= 1,->,>=latex,blue] (c_3) edge[bend left=20] (d_1);
\path [draw=blue,line width= 1,->,>=latex,blue] (d_1) edge[bend left=20] (d_2);
\path [draw=blue,line width= 1,->,>=latex,blue] (d_2) edge[bend left=20] (d_3);
\path [draw=blue,line width= 1,->,>=latex,blue] (d_3) edge[bend left=20] (e_1);
\path [draw=blue,line width= 1,->,>=latex,blue] (e_1) edge[bend left=20] (e_2);
\path [draw=blue,line width= 1,->,>=latex,blue] (e_2) edge[bend left=20] (e_3);
\path [draw=blue,line width= 1,->,>=latex,blue] (e_3) edge[bend left=20] (e_4);

\path [draw=red,line width= 1,->,>=latex,red] (e_4) edge[bend left=20] (e_1);
\path [draw=red,line width= 1,->,>=latex,red] (e_1) edge[bend left=20] (d_1);
\path [draw=red,line width= 1,->,>=latex,red] (d_1) edge[bend right=10] (w_2);
\path [draw=red,line width= 1,->,>=latex,red] (w_2) edge[bend left=10] (c_2);
\path [draw=red,line width= 1,->,>=latex,red] (c_2) edge[bend left=20] (c_6);
\path [draw=red,dashed,line width= 1,->,>=latex,red] (c_6) edge[bend right=20] (w_5);

\end{tikzpicture}
\caption{The double-move for Case 4 of Theorem~\ref{thm:two_exchanges}.}\label{fig:case_4}
\end{figure}
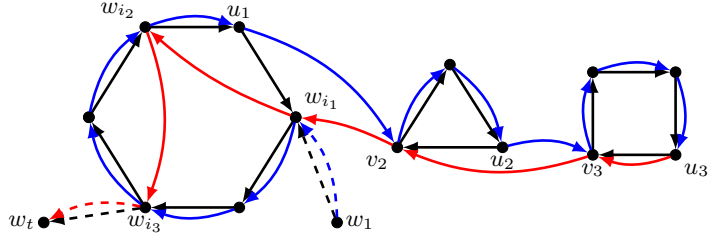

For the second sequential move, begin at $u_s$ via edge $e_s$. Then for $i = s,...,2$, travel along the edge $(v_i, v_{i-1})$ to correct items misplaced among the cycles. Once $v_1 = w_{i_1}$ is reached, introduce (if needed) an edge from $w_{i_1}$ to $w_{i_2}$ whose item corresponds to the item sent from $w_{i_1}$ in $P$. Follow this edge and then finish the move by following $P$ from $w_{i_2}$ to $w_{i_3}$ and then to $w_t$. All clusters are then changed as desired by again using the arguments from Case 1. \eoproof
\end{proof}

In the proof of Theorem~\ref{thm:two_exchanges}, we observe an important implication: if a path intersects a set of disjoint cycles at most three times, then \textit{all} transfers correcponding to the cycles and the path can be correctly applied using one of the double-moves from the theorem.

\begin{theorem}\label{thm:three_intersections}
Let $\C, \C'$ be $k$-clusterings where $\CDG$ consists of a set $\mathcal{Y}$ of disjoint cycles and a path $P$. If $P$ intersects $\mathcal{Y}$ at most three times, then $d(\C, \C') = 2$. 
\end{theorem}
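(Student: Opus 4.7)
The plan is to take the appropriate double-move from Theorem~\ref{thm:two_exchanges} (depending on which of the four cases describes the distribution of $w_{i_1}, w_{i_2}, w_{i_3}$ among the cycles of $\mathcal{Y}$) and modify it slightly to also apply the $P$-transfers along the segment that the original construction bypasses. In each of those four constructions, a single ``shortcut'' introduced edge from $w_{i_1}$ to $w_{i_2}$ carries ``the item sent from $w_{i_1}$ in $P$.'' The plan is to replace this shortcut by the actual $P$-edges $(w_{i_1}, w_{i_1+1}), (w_{i_1+1}, w_{i_1+2}), \dots, (w_{i_2-1}, w_{i_2})$ at the same point in the move; depending on the case, this insertion will lie inside the cyclical move (Cases~1 and~2) or inside the sequential move (Cases~3 and~4).

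The ``at most three intersections'' hypothesis is exactly what justifies this modification. Since $w_{i_1}$ is the first vertex of $P$ in $\mathcal{Y}$ and $w_{i_2}$ is the second-to-last, every vertex strictly between them on $P$ lies outside $\mathcal{Y}$. These intermediate vertices therefore cannot coincide with any vertex already present in the move, so the modified move remains a simple directed cycle (or simple directed path, as appropriate), which is precisely what is required for it to correspond to a valid cyclical or sequential move.

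It then remains to verify, case by case, that the modified double-move correctly applies every transfer of $\CDG$. The four $\mathcal{Y}$-related guarantees from Theorem~\ref{thm:two_exchanges} carry over without change: the cycles are traversed as before and the correction steps still fix items misplaced across cycles. For the $P$-transfers, each of the segments $w_1 \to w_{i_1}$, $w_{i_2} \to w_{i_3}$, and $w_{i_3} \to w_t$ is already handled by some portion of the original double-move, while the inserted $P$-segment supplies the remaining transfers from $w_{i_1}$ to $w_{i_2}$. Intermediate $P$-vertices have $\CDG$-in- and out-degree one, so a single correct item is received from and sent to the appropriate $P$-neighbor. At the intersection vertices $w_{i_1}, w_{i_2}, w_{i_3}$, both their $P$-transfers and their $\mathcal{Y}$-transfers are handled across the two moves, the first receiving a correct $P$-item before sending its $\mathcal{Y}$-item, the last receiving a corrected $\mathcal{Y}$-item before sending its $P$-item, and $w_{i_2}$ being fed correctly from both directions.

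The main obstacle is the case-by-case verification: in each of the four cases one must check that inserting the $P$-segment into the designated move preserves its simplicity, and that the items delivered at $w_{i_1}, w_{i_2}, w_{i_3}$ (where $\mathcal{Y}$- and $P$-edges meet) are exactly the intended ones. Degenerate subcases with fewer than three distinct intersection vertices (e.g., $w_{i_1} = w_{i_2}$) correspond to an empty $P$-segment, so the modification is trivial and the conclusion reduces to Lemma~\ref{lem:one_cycle_intersect} or directly to Theorem~\ref{thm:two_exchanges}.
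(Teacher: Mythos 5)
Your proposal is correct and takes essentially the same approach as the paper's proof: in each of the four double-moves of Theorem~\ref{thm:two_exchanges}, replace the shortcut edge from $w_{i_1}$ to $w_{i_2}$ by the actual $P$-edges, and note that because $P$ meets $\mathcal{Y}$ at most three times, every vertex strictly between $w_{i_1}$ and $w_{i_2}$ (and between $w_{i_2}$ and $w_{i_3}$) lies outside $\mathcal{Y}$, so the move remains a simple directed cycle or path. The paper states this in one terse paragraph; your write-up merely spells out the simplicity check and the degenerate subcases that the paper leaves implicit.
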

\begin{proof}
Let $P = w_1...w_t$. As in the proof of Theorem~\ref{thm:two_exchanges}, consider $w_{i_1}$, $w_{i_2}$, and $w_{i_3}$: the first, second-to-last, and last vertices of $P$ which are covered by $\mathcal{Y}$. If these are the only vertices of $P$ covered by $\mathcal{Y}$, then all edges of $P$ can be applied in any of the four double-moves from Theorem~\ref{thm:two_exchanges}. To see this, note that since no vertices on $P$ between $w_{i_1}$ and $w_{i_2}$ are covered by $\mathcal{Y}$, we can follow \textit{all} corresponding edges of $P$ when sending an item from $w_{i_1}$ to $w_{i_2}$. As in the proof of the theorem, the same holds for all edges between $w_{i_2}$ and $w_{i_3}$. Hence, all transfers corresponding to the edges from both $\mathcal{Y}$ and $P$ are correctly applied through the appropriate double-move.
\eoproof
\end{proof}

Of course, since the number of times a path $P$ intersects a set $\mathcal{Y}$ of cycles is at most the number of vertices in $P$, this implies that a path with at most three vertices can always be integrated with $\mathcal{Y}$ using one of the double-moves from Theorem~\ref{thm:two_exchanges}.

\begin{corollary}\label{cor:three_vertices}
Let $\C, \C'$ be $k$-clusterings where $\CDG$ consists of a set $\mathcal{Y}$ of disjoint cycles and a path $P$ with at most three vertices. Then $d(\C, \C') = 2$. 
\end{corollary}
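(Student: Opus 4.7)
The plan is to derive this as an immediate consequence of Theorem~\ref{thm:three_intersections}. The key observation is that the number of times a path $P$ intersects a set $\mathcal{Y}$ of disjoint cycles is bounded above by the number of vertices of $P$: each intersection point is, by definition, a vertex of $P$ that also belongs to some cycle in $\mathcal{Y}$, and distinct intersections correspond to distinct vertices (since the cycles in $\mathcal{Y}$ are vertex-disjoint). Hence if $P$ has at most three vertices, then $P$ intersects $\mathcal{Y}$ at most three times, and Theorem~\ref{thm:three_intersections} yields $d(\C, \C') = 2$.

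The only subtle point worth addressing is why the distance is equal to $2$ rather than merely at most $2$. Since $P$ is a path in $\CDG$, it has at least one edge, so $P$ corresponds to a genuine sequential move; in particular, the clusters at its two endpoints must have their sizes changed. A single cyclical or sequential move cannot simultaneously realize all the required transfers from both $P$ and the disjoint cycles of $\mathcal{Y}$ (assuming $\mathcal{Y}$ is nonempty, as in Theorem~\ref{thm:three_intersections}), so $d(\C, \C') \geq 2$. Combined with the upper bound from Theorem~\ref{thm:three_intersections}, this gives the desired equality. There is no genuine obstacle here; the entire content is the bookkeeping remark that the vertex count of $P$ controls the intersection count.
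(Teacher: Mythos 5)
Your proof is correct and takes essentially the same route as the paper, which simply observes that the number of intersections of $P$ with $\mathcal{Y}$ is bounded by the number of vertices of $P$ and invokes Theorem~\ref{thm:three_intersections}. Your additional remark justifying the lower bound $d(\C,\C')\geq 2$ (that a single cyclical or sequential move corresponds to a $\CDG$ consisting of one cycle or one path, which cannot hold when $\mathcal{Y}$ is nonempty and $P$ has an edge) is a small but welcome bit of care that the paper leaves implicit.
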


\section{Bounds on the Transformation Distance}\label{sec:bounds}

In this section, we use the double-moves from Section~\ref{sec:moves} to prove upper bounds on the transformation distance between clusterings based on certain properties of the related $CDG$. 

Given any two $k$-clusterings $\C, \C'$ with clustering-difference graph $D := \CDG$, our goal is to transform $\C$ into $\C'$ using as few cyclical and sequential moves as possible. Recall the fundamental difference between these two types of moves: cyclical moves transfer items among the clusters while preserving the original cluster sizes; on the other hand, sequential moves transfer items while increasing the size of one cluster and decreasing the size of another. This motivates a decomposition of $D$ into two parts corresponding to these different types of moves.

\begin{definition}[Path-Cycle Decomposition]
Let $\C, \C'$ be $k$-clusterings with clustering-difference graph $D := \CDG$. For $i = 1,...,k$, let $\delta_i$ denote $\left| (|C_i| - |C'_i|) \right|$, the change in the size of cluster $C_i$ between $\C$ and $\C'$. A \textit{path-cycle decomposition $(D_P, D_Y)$ of $D$} is a decomposition of $D$ into two parts: a set $D_P$ containing $ \frac{1}{2}\sum_{i=1}^k \delta_i$ directed paths and a graph $D_Y$ which decomposes into directed cycles.
\end{definition}

For any path-cycle decomposition $(D_P, D_Y)$ of $D$, the paths of $D_P$ adjust the cluster sizes of $\C$ to those of $\C'$ and the edges of $D_Y$ apply any remaining transfers. Such a decomposition can be found easily: greedily construct directed paths in $D$ which begin at \textit{excess vertices}, those vertices $c_i$ satisfying $d^+(c_i) > d^-(c_i)$, and terminate at \textit{deficit vertices}, those satisfying $d^-(c_i) > d^+(c_i)$, and add the paths to $D_P$. Once there do not exist any excess or deficit vertices in $D$, the remaining edges in the graph form $D_Y$. Alternatively, greedily remove directed cycles from $D$ to build $D_Y$ and the leftover edges will decompose into $D_P$. Note that we can store $D_P$ either as a set of directed paths or as a graph which decomposes into paths. Nevertheless, the fixed number of paths from excess to deficit vertices in $D_P$ gives a lower bound on the transformation distance between the clusterings.

\begin{lemma}\label{lem:lower_bound}
Let $\C, \C'$ be $k$-clusterings of the same data set. Then $d(\C, \C') \geq \frac{1}{2}\sum_{i=1}^k \delta_i$, where $\delta_i = | (|C_i| - |C'_i|) |$.
\end{lemma}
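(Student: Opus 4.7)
The plan is to use a simple potential-function argument based on the different effects cyclical and sequential moves have on cluster sizes. Define the potential of an intermediate $k$-clustering $\widetilde{\C} = (\widetilde{C}_1, \ldots, \widetilde{C}_k)$ relative to the target $\C'$ by
\[
\Phi(\widetilde{\C}) \ := \ \sum_{i=1}^k \bigl|\,|\widetilde{C}_i| - |C'_i|\,\bigr|.
\]
Then $\Phi(\C) = \sum_{i=1}^k \delta_i$ and $\Phi(\C') = 0$, so any sequence of moves transforming $\C$ into $\C'$ must reduce $\Phi$ by $\sum_{i=1}^k \delta_i$ in total.

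The central observation is that a single move changes $\Phi$ by at most $2$. Indeed, a cyclical move cycles items around a directed cycle in a $CDG$, so every cluster covered by the cycle loses exactly one item and gains exactly one item; hence all cluster sizes are preserved and $\Phi$ is unchanged. A sequential move, on the other hand, corresponds to a directed path $w_1 \ldots w_t$: the cluster corresponding to $w_1$ loses one item (without gaining any), the cluster corresponding to $w_t$ gains one item (without losing any), and every internal vertex of the path loses one and gains one. Thus exactly two cluster sizes change, each by $\pm 1$, and the corresponding two summands $|(|\widetilde{C}_i| - |C'_i|)|$ each change by at most $1$. So $|\Phi(\widetilde{\C}_{\text{new}}) - \Phi(\widetilde{\C}_{\text{old}})| \leq 2$ in all cases.

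Combining the two facts, if $d(\C, \C') = m$ then there is a sequence of $m$ moves taking $\C$ to $\C'$, along which $\Phi$ drops from $\sum_{i=1}^k \delta_i$ to $0$ in steps of size at most $2$. This forces $2m \geq \sum_{i=1}^k \delta_i$, i.e.\ $d(\C, \C') \geq \tfrac{1}{2}\sum_{i=1}^k \delta_i$, as claimed. The only subtlety worth noting explicitly is that intermediate clusterings need not be bounded-size, so the argument is carried out for arbitrary $k$-clusterings; this is exactly the level of generality of the transformation distance. There is no real obstacle here — the proof is essentially a one-line parity-style counting bound once the effect of each move type on cluster sizes is spelled out.
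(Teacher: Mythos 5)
Your proof is correct and uses essentially the same argument as the paper: cyclical moves leave all cluster sizes unchanged while a sequential move changes exactly two cluster sizes by one, so at least $\frac{1}{2}\sum_{i=1}^k \delta_i$ moves are required. The potential-function framing is a slightly more formal packaging of the paper's counting argument, but the underlying idea is identical.
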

\begin{proof}
By definition, $\delta_i$ is the change in cluster size needed to transform $C_i$ into $C'_i$. Note that the sum $\sum_{i=1}^k \delta_i$ is therefore even. Cyclical moves do not change the size of any clusters while sequential moves change the size of exactly two clusters by one. Hence, at least $\frac{1}{2}\sum_{i=1}^k \delta_i$ sequential moves are required in order to change the cluster sizes of $\C$ to those of $\C'$.  \eoproof
\end{proof}

Given a path-cycle decomposition $(D_P, D_Y)$ of $D$, a straightforward approach for transforming $\C$ into $\C'$ is to separately apply the paths of $D_P$ followed by the cycles of $D_Y$. However, whereas a fixed number of sequential moves is required to apply all paths of $D_P$, the number of cyclical moves required to apply all transfers in $D_Y$ is generally less than its number of cycles. Using the double-move from Figure~\ref{fig:disjoint_cycles}, we can integrate sets of disjoint cycles from $D_Y$ to achieve a transformation distance bound which generalizes Corollary 7 in \cite{b-13}. This serves as a starting point for our discussion on an improved upper bound for $d(\C, \C')$. Recall that the \textit{shared degree} $\eta_i$ of a vertex $c_i$ in $D$ is the minimum of its indegree and outdegree.

\begin{lemma}\label{lem:naive_distance_bound}
Let $\C, \C'$ be $k$-clusterings of the same data set. Then
\begin{align*}
d(\C, \C') \leq \eta_{i_1} + \eta_{i_2} + \frac{1}{2}\sum_{i=1}^k \delta_i,
\end{align*}
where $\delta_i = | (|C_i| - |C'_i|) |$, $\eta_i$ is the shared degree of $c_i$ in $\CDG$, $i_1 = \arg \max \eta_i$, and $i_2 = \arg \max_{i \neq i_1} \eta_i$. 
\end{lemma}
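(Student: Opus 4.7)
The plan is to separate the ``size-adjusting'' component of the transformation from the ``size-preserving'' component via a path-cycle decomposition, apply the paths individually, and apply the cycles as economically as possible using Proposition~\ref{prop:disjoint_cycles}.

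First I would fix a path-cycle decomposition $(D_P, D_Y)$ of $D := \CDG$. By definition, $D_P$ contains exactly $p := \frac{1}{2}\sum_{i=1}^k \delta_i$ directed paths, and $D_Y$ is a graph that decomposes into directed cycles. Each path of $D_P$ is the $CDG$ of a single sequential move, so applying them one after the other contributes exactly $p$ moves; by Lemma~\ref{lem:lower_bound} this matches the required change in cluster sizes and cannot be avoided.

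Next, I would bound the number of cyclical moves needed to realize $D_Y$ by $\eta_{i_1} + \eta_{i_2}$. Fix any decomposition of $D_Y$ into directed cycles $Y_1,\dots,Y_m$. Since every cycle through a vertex $c_i$ consumes one unit of both the in- and out-degree at $c_i$, the number of cycles containing $c_i$ in this decomposition is at most the shared degree of $c_i$ in $D_Y$, and this in turn is at most $\eta_i$. Split the cycles into those through $c_{i_1}$ (at most $\eta_{i_1}$ in number, and pairwise intersecting at $c_{i_1}$, so they cannot be bundled via Proposition~\ref{prop:disjoint_cycles}) and those avoiding $c_{i_1}$. For the latter collection no vertex has shared degree exceeding $\eta_{i_2}$, since any vertex present in this collection is different from $c_{i_1}$ and hence has $\eta \leq \eta_{i_2}$. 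The strategy is to process the cycles through $c_{i_1}$ individually while, at each step, tagging along a maximal disjoint collection of cycles from the remaining $D_Y$ and applying the resulting disjoint family via Proposition~\ref{prop:disjoint_cycles} in two moves; any cycles still left afterwards are applied one at a time. Bookkeeping then yields that the total number of cyclical moves is at most $\eta_{i_1} + \eta_{i_2}$.

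Finally I would combine the two phases: the $p$ sequential moves from $D_P$ and at most $\eta_{i_1} + \eta_{i_2}$ cyclical moves from $D_Y$ together transform $\C$ into $\C'$, giving $d(\C, \C') \leq \eta_{i_1} + \eta_{i_2} + \frac{1}{2}\sum_{i=1}^k \delta_i$.

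The main obstacle is establishing the cycle bound cleanly. The naive argument of processing pairs of disjoint cycles via Proposition~\ref{prop:disjoint_cycles} only yields roughly $2\eta_{i_1}$ moves, which is loose. To sharpen this to $\eta_{i_1} + \eta_{i_2}$ requires the more careful interleaving described above, so that each application of Proposition~\ref{prop:disjoint_cycles} absorbs at most one cycle through $c_{i_1}$ while clearing several additional cycles for free. Making this pairing precise, particularly when the cycles avoiding $c_{i_1}$ have a complicated intersection pattern among themselves, is the delicate step and directly generalizes the accounting behind Corollary~7 in~\cite{b-13}.
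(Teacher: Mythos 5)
The first half of your plan — fix a path-cycle decomposition, spend exactly $\frac{1}{2}\sum_i\delta_i$ sequential moves on $D_P$, then handle $D_Y$ by cyclical moves — matches the paper. The gap is entirely in the cycle phase, and I do not think your described strategy actually yields $\eta_{i_1}+\eta_{i_2}$.

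You propose to iterate over the cycles through $c_{i_1}$ one at a time, each time tagging along a maximal disjoint family and spending two moves via Proposition~\ref{prop:disjoint_cycles}, and then mopping up whatever is left one cycle per move. But if the decomposition of $D_Y$ has $\eta_{i_1}$ cycles through $c_{i_1}$ and each of them can be paired with at least one disjoint cycle, your loop already spends $2\eta_{i_1}$ moves, which exceeds $\eta_{i_1}+\eta_{i_2}$ whenever $\eta_{i_1}>\eta_{i_2}$. On top of that, you give no bound on how many cycles survive all of the iterations; a maximal disjoint family chosen greedily around one $c_{i_1}$-cycle is not guaranteed to sweep up all remaining cycles of maximum shared degree, so the leftover phase is not controlled by $\eta_{i_2}$ either. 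You acknowledge this as the ``delicate step,'' but it is not a detail to be filled in — it is precisely where the stated scheme breaks.

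The paper organizes the cycle phase differently, and the difference is what makes the count close. First it spends $\eta_{i_1}-\eta_{i_2}$ \emph{single} cyclical moves (no double-move) on cycles through $c_{i_1}$, bringing $c_{i_1}$'s shared degree in $D_Y$ down to at most $\eta_{i_2}$; after this leveling every vertex has shared degree at most $\eta_{i_2}$. Then, for $\eta_{i_2}$ rounds, it invokes the max-flow construction of Corollary~3 in \cite{b-13} to obtain a \emph{disjoint cycle cover of all vertices that currently attain the maximum shared degree}, and applies that whole cover in two moves via Proposition~\ref{prop:disjoint_cycles}. Because the cover hits every maximum-degree vertex simultaneously, each round provably decreases the maximum shared degree by one, so $\eta_{i_2}$ rounds suffice and the total is $(\eta_{i_1}-\eta_{i_2})+2\eta_{i_2}=\eta_{i_1}+\eta_{i_2}$. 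The two ingredients you are missing are (i) the leveling step, which ensures you never pay two moves for a cycle through $c_{i_1}$ that could have been dispatched alone, and (ii) the guaranteed disjoint cycle cover of \emph{all} current maximum-degree vertices, which is what makes the round count, rather than a per-$c_{i_1}$-cycle count, the right invariant.
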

\begin{proof}
Let $(D_P, D_Y)$ be any path-cycle decomposition of $D := \CDG$. Applying the $\frac{1}{2}\sum_{i=1}^k \delta_i$ sequential moves corresponding to the paths of $D_P$ correctly adjusts all cluster sizes. 

Next, we can use the method of Corollary~7 from \cite{b-13} to apply the cycles in $D_Y$. To do so, note that for $i=1,...,k$, the shared degree of $c_i$ in $D_Y$ is at most $\eta_i$. Hence, we may first apply at most $\eta_{i_1} - \eta_{i_2}$ cyclical moves to reduce the maximum shared degree in $D_Y$ to at most $\eta_{i_2}$. Next, using the technique in Corollary~3 from \cite{b-13}, we can solve a maximum flow problem to obtain a set of disjoint cycles in $D_Y$ covering all vertices of maximum shared degree. All transfers from this cycle cover can be applied via at most two cyclical moves using the double-move from Figure~\ref{fig:disjoint_cycles}. Repeating until the maximum shared degree of $D_Y$ is zero, all transfers from $D_Y$ are performed in at most $(\eta_{i_1} - \eta_{i_2}) + 2 \eta_{i_2} = \eta_{i_1} + \eta_{i_2}$ cyclical moves. Therefore, at most $\eta_{i_1} + \eta_{i_2} + \frac{1}{2}\sum_{i=1}^k \delta_i$ cyclical and sequential moves are required to transform $\C$ into $\C'$.
 \eoproof
\end{proof}

This initial upper bound on $d(\C, \C')$ uses the double-move for integrating disjoint cyclical moves from Figure~\ref{fig:disjoint_cycles} but does not yet take advantage of any of the double-moves from Section~\ref{sec:moves} which integrate both cyclical and sequential moves. For instance, when applying the cyclical moves from a disjoint cycle cover $\mathcal{Y}$ of all vertices of maximum shared degree in $D_Y$, we could attempt to integrate a path $P$ from $D_P$. If $P$ is disjoint from $\mathcal{Y}$, if $P$ intersects at most one cycle of $\mathcal{Y}$, or if $P$ intersects $\mathcal{Y}$ at most three times, we could use one of the double-moves from Lemma~\ref{lem:disjoint_paths}, Lemma~\ref{lem:one_cycle_intersect}, or Theorem~\ref{thm:three_intersections} to integrate $P$ at no extra cost, reducing the number of remaining sequential moves. However, we cannot guarantee that such a path $P$ exists in $D_P$. 

Nevertheless, we can achieve an improved bound on the transformation distance by considering each path in $D_P$ as the combination of a (short) sequential move with a cyclical move. To motivate this, suppose $\CDG$ consists of a set $\mathcal{Y}$ of disjoint cycles and a path $P = w_1...w_t$ with $t \geq 4$, where $w_1$, $w_{t-1}$, and $w_t$ are all covered by $\mathcal{Y}$. We can apply one of the four double-moves from Theorem~\ref{thm:two_exchanges} to perform all transfers corresponding to the edges in $\mathcal{Y}$ while decreasing the excess of $w_1$ and the deficit of $w_t$. However, the double-move does not apply any edges of $P$ between $w_2$ and $w_{t-1}$. Additionally, $w_{t-1}$ receives an incorrect item from $w_1$ during the double-move which still needs to be sent to $w_2$. Hence, a new edge from $w_{t-1}$ to $w_{2}$ is introduced and the resulting $CDG$ consists of the directed cycle $w_2...w_{t-1}w_2$. 

In this manner, we can represent the sequential move corresponding to any path $P = w_1...w_t$ in $D_P$ covering $t \geq 4$ vertices as the combination of such a cycle $P_Y = w_2...w_{t-1}w_2$ and a path $P'$ with three vertices. As depicted in Figure~\ref{fig:path_cycle}, there are two cases to consider regarding the interior vertex of $P'$ and the item sent along the artificially introduced edge $e_P := (w_{t-1}, w_2)$ in $P_Y$. These cases depend on the order in which the corresponding transfers are applied. Let $x_2$ denote the item to be sent from $w_1$ to $w_2$ in $P$, and let $x_t$ denote the item to be sent from $w_{t-1}$ to $w_t$. If $P'$ is applied first (as in the previous paragraph), let $P' := w_1w_{t-1}w_t$ and let $e_P$ send $x_2$ from $w_{t-1}$ to the correct destination $w_2$. This case is depicted in Figure~\ref{fig:path_cycle_1}. On the other hand, if $e_P$ is applied before $P'$, let $e_P$ send $x_t$ from $w_{t-1}$ to $w_2$ and let $P' := w_1w_2w_t$ as depicted in Figure~\ref{fig:path_cycle_2}. Either case has the same effect on the underlying clusters as the original path $P$.

\begin{figure}[b]
\centering
\begin{subfigure}{0.45 \textwidth}
\centering
\begin{tikzpicture}[vertices/.style={draw, fill=black, circle, inner sep=0pt, minimum size = 4pt, outer sep=0pt}, scale=1.2]

\node[vertices] (w_1) at (0,0) {};
\node[vertices] (w_2) at (1,0) {};
\node[vertices] (w_3) at (2,0) {};
\node[vertices] (w_4) at (3,0) {};
\node[vertices] (w_5) at (4,0) {};
\node[vertices] (w_6) at (5,0) {};

\foreach \to/\from in {w_1/w_2, w_2/w_3, w_4/w_5, w_5/w_6}
\draw[draw=black, line width= 1,  ->, >=latex]  (\to)--(\from);
\path [draw=black, dashed, line width= 1,->,>=latex] (w_3) edge (w_4);

\node[below] at (w_1) {$w_1$};
\node[below] at (w_2) {$w_2$};
\node[below] at (w_5) {$w_{t-1}$};
\node[below] at (w_6) {$w_t$};

\node (x_2_2) at (0.7, 0.35) {$x_2$};
\node (x_t_2) at (4.5, 0.26) {$x_t$};
\node (x_both) at (2.5, 0.95) {$x_2$};

\path [draw=blue,line width= 1,->,>=latex,blue] (w_1) edge[bend left=20] (w_5);
\path [draw=blue,line width= 1,->,>=latex,blue] (w_5) edge[bend left=25] (w_6);
\path [draw=green,line width= 1,->,>=latex,green] (w_5) edge[bend right=65] (w_2);
\path [draw=green,line width= 1,->,>=latex,green] (w_2) edge[bend left=15] (w_3);
\path [draw=green, dashed, line width= 1,->,>=latex,green] (w_3) edge[bend left=15] (w_4);
\path [draw=green,line width= 1,->,>=latex,green] (w_4) edge[bend left=15] (w_5);
\end{tikzpicture}
\caption{The first case for $P'$ and $e_P$.}\label{fig:path_cycle_1}
\end{subfigure}
\quad
\begin{subfigure}{0.45 \textwidth}
\centering
\begin{tikzpicture}[vertices/.style={draw, fill=black, circle, inner sep=0pt, minimum size = 4pt, outer sep=0pt}, scale=1.2]

\node[vertices] (w_1) at (0,0) {};
\node[vertices] (w_2) at (1,0) {};
\node[vertices] (w_3) at (2,0) {};
\node[vertices] (w_4) at (3,0) {};
\node[vertices] (w_5) at (4,0) {};
\node[vertices] (w_6) at (5,0) {};

\foreach \to/\from in {w_1/w_2, w_2/w_3, w_4/w_5, w_5/w_6}
\draw[draw=black, line width= 1,  ->, >=latex]  (\to)--(\from);
\path [draw=black, dashed, line width= 1,->,>=latex] (w_3) edge (w_4);

\node[below] at (w_1) {$w_1$};
\node[below] at (w_2) {$w_2$};
\node[below] at (w_5) {$w_{t-1}$};
\node[below] at (w_6) {$w_t$};

\node (x_2_1) at (0.5, 0.25) {$x_2$};
\node (x_t_1) at (2.6, 0.485) {$x_t$};
\node (x_both) at (2.5, 0.95) {$x_t$};

\path [draw=red,line width= 1,->,>=latex,red] (w_1) edge[bend left=20] (w_2);
\path [draw=red,line width= 1,->,>=latex,red] (w_2) edge[bend left=20] (w_6);
\path [draw=green,line width= 1,->,>=latex,green] (w_5) edge[bend right=65] (w_2);
\path [draw=green,line width= 1,->,>=latex,green] (w_2) edge[bend left=15] (w_3);
\path [draw=green, dashed, line width= 1,->,>=latex,green] (w_3) edge[bend left=15] (w_4);
\path [draw=green,line width= 1,->,>=latex,green] (w_4) edge[bend left=15] (w_5);
\end{tikzpicture}
\caption{The second case for $P'$ and $e_P$.}\label{fig:path_cycle_2}
\end{subfigure}
\caption{The sequential move corresponding to the path $P=w_1...w_t$, given by the black edges, is equivalent to one of two combinations of a short sequential move with a cyclical move. In the first case, the path $P' := w_1w_{t-1}w_t$, given by the blue edges, is applied first. Next, the cycle $P_Y$, given by the green edges, is applied and sends $x_2$ to the correct destination $w_2$ via the introduced edge $e_P := (w_{t-1}, w_2)$. In the second case, the cycle $P_Y$ is applied first, sending $x_t$ from $w_{t-1}$ to $w_2$ via $e_P$. Then the path $P' := w_1 w_2 w_t$, given by the red edges, is applied and $x_t$ is correctly sent from $w_2$ to $w_t$. In both cases, all transfers corresponding to the original path $P$ are performed correctly via the path $P'$ and the cycle $P_Y$.}\label{fig:path_cycle}
\end{figure}
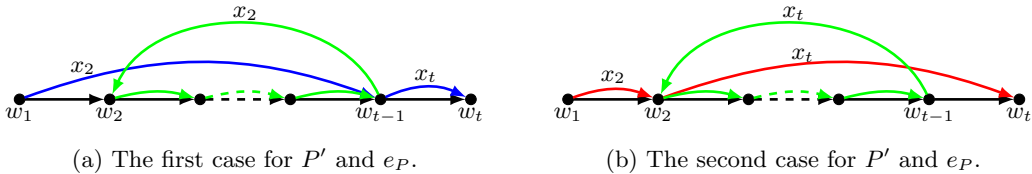

Therefore, we decompose each path $P$ from $D_P$ with more than three vertices in this manner, adding the resulting cycle $P_Y$ to $D_Y$ and replacing $P$ with $P'$ in $D_P$. All paths in $D_P$ then have at most three vertices and Corollary~\ref{cor:three_vertices} implies that we can completely integrate any of these paths with a disjoint cycle cover from $D_Y$ in only two moves. Note also that each vertex in a cycle $P_Y$ is an interior vertex of the original path $P$. Hence, even after introducing these additional cycles to $D_Y$, the shared degree of each vertex in $D_Y$ remains at most the shared degree of that vertex in the original clustering-difference graph. This allows us to improve upon the distance bound given in Lemma~\ref{lem:naive_distance_bound}.

The challenge in this approach lies in the fact that the interior vertex of $P'$ (either $w_{t-1}$ or $w_{2}$) and the label of $e_P$ (either $x_2$ or $x_t$) depend on the order in which the corresponding transfers are applied. If a cycle cover $\mathcal{Y}$ does not include $e_P$, then integrating $P'$ with $\mathcal{Y}$ is straightforward -- simply choose the correct interior vertex for $P'$ depending on whether or not $e_P$ has been applied yet. Once $P'$ is applied, then if $e_P$ remains in $D_Y$, change its label from $x_t$ to $x_2$. 

However, if $e_P$ is contained in $\mathcal{Y}$, then we must make adjustments to the double-moves from Theorem~\ref{thm:two_exchanges} in order to take into account the different possible cases for $e_P$ and $P'$. Nevertheless, this approach allows us to integrate sequential moves from $D_P$ with disjoint cyclical moves from $D_Y$ at no extra cost, resulting in the following greatly improved distance bound. The bound depends only on the larger of the second-largest shared degree and the overall change in cluster sizes rather than on the sum of these values as in Lemma~\ref{lem:naive_distance_bound}.

\begin{theorem}\label{thm:improved_bound}
Let $\C, \C'$ be $k$-clusterings of the same data set. Then
\begin{align*}
d(\C, \C') \leq \eta_{i_1} + \max \left\lbrace \eta_{i_2}, \ \frac{1}{2}\sum_{i=1}^k \delta_i \right\rbrace,
\end{align*}
where $\delta_i = | (|C_i| - |C'_i|) |$, $\eta_i$ is the shared degree of $c_i$ in $\CDG$, $i_1 = \arg \max \eta_i$, and $i_2 = \arg \max_{i \neq i_1} \eta_i$.
\end{theorem}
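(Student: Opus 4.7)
The plan is to execute the iterative scheme of Lemma~\ref{lem:naive_distance_bound} on $D := \CDG$, but to absorb sequential moves into the cyclical double-moves via the integration results of Section~\ref{sec:moves}. Begin with any path-cycle decomposition $(D_P^{(0)}, D_Y^{(0)})$. For every path $P = w_1 \dots w_t \in D_P^{(0)}$ with $t \geq 4$, replace $P$ by the three-vertex path $P'$ and the auxiliary cycle $P_Y$ on $\{w_2, \dots, w_{t-1}\}$ described in Figure~\ref{fig:path_cycle}, and move $P_Y$ into $D_Y$. In the resulting decomposition $(D_P, D_Y)$, every path of $D_P$ has at most three vertices, $|D_P| = T := \frac{1}{2}\sum_i \delta_i$, and a direct degree count gives $\eta_i(D_Y) \leq \eta_i$ for every $i$: at each interior vertex of the original $P$, the new cycle $P_Y$ contributes $+1$ to both the in-degree and the out-degree, matching the contribution of $P$, while at the endpoints it contributes nothing. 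Thus the top two shared degrees of $D_Y$, say $\alpha$ and $\beta$, satisfy $\alpha \leq \eta_{i_1}$ and $\beta \leq \eta_{i_2}$.

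Now run the scheme of Lemma~\ref{lem:naive_distance_bound} on $D_Y$: first use $\alpha - \beta$ individual cyclical moves to drop the maximum shared degree of $D_Y$ to $\beta$, then execute $\beta$ rounds of the double-move of Figure~\ref{fig:disjoint_cycles}, each round extracting via max-flow a disjoint cycle cover $\mathcal{Y}$ of the current vertices of maximum shared degree. The new twist is that whenever $D_P$ is nonempty at the start of a round, pick one of its three-vertex paths $P'$ and apply the appropriate double-move of Theorem~\ref{thm:two_exchanges}, which is available by Corollary~\ref{cor:three_vertices}; this handles $\mathcal{Y}$ together with $P'$ in exactly two moves, so $P'$ is absorbed for free. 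After $\min\{T, \beta\}$ such absorptions, every remaining element of $D_P$ is applied as an individual sequential move, so the total number of moves is at most $(\alpha - \beta) + 2\beta + \max\{0, T - \beta\} = \alpha + \max\{\beta, T\} \leq \eta_{i_1} + \max\{\eta_{i_2}, T\}$.

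The main obstacle is verifying that each absorption truly costs no extra move, specifically when $P_Y \in \mathcal{Y}$. As observed in the paragraph preceding the theorem, the interior vertex of $P'$ and the label carried by the auxiliary edge $e_P$ of $P_Y$ depend on whether $P'$ is applied before or after $P_Y$ (Figure~\ref{fig:path_cycle}). When $P_Y \notin \mathcal{Y}$, one simply selects the variant matching whether $P_Y$ has already been processed and, if $P_Y$ is still pending, relabels $e_P$ from $x_t$ to $x_2$ after $P'$ is applied. When $P_Y \in \mathcal{Y}$, one must choose $e_P$ as the omitted edge of $P_Y$ inside the relevant double-move of Theorem~\ref{thm:two_exchanges} so that the cyclical and sequential phases of that double-move align with the corresponding case of Figure~\ref{fig:path_cycle}; checking case by case (against the four configurations of Theorem~\ref{thm:two_exchanges}) that this alignment is always achievable is the delicate part, while all other steps are bookkeeping.
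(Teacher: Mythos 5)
Your architecture matches the paper's exactly: decompose each long path $P \in D_P$ into the three-vertex path $P'$ and auxiliary cycle $P_Y$ of Figure~\ref{fig:path_cycle}, push $P_Y$ into $D_Y$ while observing that this keeps $\eta_i(D_Y) \leq \eta_i$ because $P_Y$ contributes only at interior vertices of $P$; then run Lemma~\ref{lem:naive_distance_bound}'s scheme on $D_Y$, absorbing one short path per disjoint-cycle double-move via Theorem~\ref{thm:two_exchanges} and Corollary~\ref{cor:three_vertices}; and your accounting $(\alpha-\beta)+2\beta+\max\{0,T-\beta\}=\alpha+\max\{\beta,T\}$ is correct. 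You also correctly locate the crux: what to do when the cycle cover $\mathcal{Y}$ contains $e_P$.

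However, the fix you sketch for that crux --- ``choose $e_P$ as the omitted edge of $P_Y$ inside the relevant double-move'' --- is not the right mechanism and cannot generally be carried out. In Theorem~\ref{thm:two_exchanges}'s constructions the omitted edge $e_i = (u_i,v_i)$ of a cycle containing an intersection point $w_{i_1}$, $w_{i_2}$, or $w_{i_3}$ is forced to have that point as one of its endpoints (e.g., $v_1 = w_{i_1}$, $v_s = w_{i_2}$, or $u_s = w_{i_3}$, depending on the case). But $e_P = (w_{t-1}, w_2)$ has head $w_2$, an interior vertex of $P_Y$ that never appears on $P'$, and its tail $w_{t-1}$ coincides with the required intersection point only in some configurations; so in general you cannot select $e_P$ as the omitted edge while obeying the case constraints. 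The paper's actual resolution is different. In the two sub-cases where the unmodified double-move already applies $e_P$ before any edge of $P'$ (Cases 2a and 4a), it simply switches the interior vertex of $P'$ to $w_2$ and uses the original move unchanged. In the remaining sub-cases (1, 2b, 2c, 3, 4b), it reroutes the double-move so that $e_P$ is never traversed at all: $x_2$ is introduced directly from $w_1$ to $w_2$ in one of the two moves, $x_t$ is sent directly from $w_{t-1}$ to $w_t$ in the other, and $w_{t-1}$ is dropped from the cyclical traversal entirely, with the edge of $P_Y$ entering $w_{t-1}$ handled by the standard correction chain. That rerouting is not expressible as merely picking a different omitted edge --- it changes the traversal pattern of both halves of the double-move --- and it is precisely the content you flagged as ``the delicate part'' but did not carry out. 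As written, your argument has a genuine gap here: the stated mechanism would fail the case check you deferred.
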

\begin{proof}
Let $(D_P, D_Y)$ be any path-cycle decomposition of $D := \CDG$. For each path $P = w_1...w_t$ of the $\frac{1}{2}\sum_{i=1}^k \delta_i$ paths in $D_P$, if $t \geq 4$, decompose $P$ into a cycle $P_Y$ and a short path $P'$ as depicted in the cases of Figure~\ref{fig:path_cycle}. Specifically, let $P' := w_1w_{t-1}w_t$  where the label of edge $(w_1, w_{t-1})$ is the item $x_2$ to be sent from $w_1$ to $w_2$ in $P$, as depicted by the blue edges in Figure~\ref{fig:path_cycle_1}. Replace $P$ with $P'$ in $D_P$. In addition, introduce the cycle $P_Y = w_2...w_{t-1}w_2$ to $D_Y$, where the label of the artificial edge $e_P := (w_{t-1}, w_2)$ in $P_Y$ is the item $x_t$ to be sent from $w_{t-1}$ to $w_t$ in $P$, as depicted by the green edges in Figure~\ref{fig:path_cycle_2}. Note that each vertex in $P_Y$ is an interior vertex of the original path $P$; hence, in the resulting cycle graph $D_Y$, the shared degree of each vertex $c_i$ remains at most $\eta_i$. 

As in the proof of Lemma~\ref{lem:naive_distance_bound}, first apply at most $\eta_{i_1} - \eta_{i_2}$ cyclical moves to reduce the maximum shared degree in $D_Y$ to at most $\eta_{i_2}$. Whenever an artificial edge $e_P$ is applied in such a move, change the interior vertex of the corresponding path $P'$ in $D_P$ so that $P' = w_1w_2w_{t}$ as in Figure~\ref{fig:path_cycle_2}. 

Now, again as in the proof of Lemma~\ref{lem:naive_distance_bound}, we can reduce the maximum shared degree in $D_Y$ by finding a disjoint cycle cover for the vertices of maximum shared degree and applying a double-move. However, in each such double-move, we will also integrate a path from $D_P$. 

Let $\mathcal{Y}$ be such a set of disjoint cycles in $D_Y$, which can be found using the technique from \cite{b-13}. Choose any path from $D_P$. Since each path in $D_P$ has at most three vertices, if the selected path is an original path from $\CDG$, integrating the path with $\mathcal{Y}$ in a double-move is straightforward via  Corollary~\ref{cor:three_vertices}. Again, if an artificial edge $e_P$ is applied through this double-move and the corresponding path $P'$ remains in $D_P$, switch the interior vertex of $P'$ from $w_{t-1}$ to $w_2$ as in Figure~\ref{fig:path_cycle_2}.

Hence, assume the selected path from $D_P$ is an introduced path of the form $P'$ with corresponding artificial edge $e_P$. If $e_P$ is not contained in the cycle cover $\mathcal{Y}$, integrating $P'$ with $\mathcal{Y}$ is again straightforward via Corollary~\ref{cor:three_vertices}: the interior vertex of $P'$ is known and we can simply apply one of the double-moves from Theorem~\ref{thm:two_exchanges}. After the double-move is applied, make any necessary adjustments to the remaining paths in $D_P$ as in the previous paragraph. Additionally, if $e_P$ remains in $D_Y$, change its label from $x_t$ to $x_2$ as in Figure~\ref{fig:path_cycle_1}.

However, if the edge $e_P$ corresponding to $P'$ is contained in $\mathcal{Y}$, then we must make modifications to the double-moves of Theorem~\ref{thm:two_exchanges} to account for the two different cases for $e_P$ and $P'$. Note that if this situation arises, $e_P$ has not yet been applied so $P'$ has the initial form $P' = w_1w_{t-1}w_t$. We modify each of the four cases regarding the intersection points of $P'$ with $\mathcal{Y}$ from Theorem~\ref{thm:two_exchanges} to perform the necessary transfers. Since $e_P$ is included in $\mathcal{Y}$, both $w_{t-1}$ and $w_2$ are necessarily covered by $\mathcal{Y}$, but $w_1$ and $w_t$ need not be covered. Several of the case modifications depend on whether or not these two vertices are actually covered by the cycles.

\vspace{.1in}
\textbf{Case 1:} All three vertices of $P' = w_1 w_{t-1} w_t$ are covered by $\mathcal{Y}$, where $w_1$ and $w_t$ belong to different cycles of $\mathcal{Y}$ and $w_{t-1}$ (and hence, also $e_P$ and $w_2$) belongs to the same cycle as $w_t$. See the examples in Figure~\ref{fig:case_1_modified} -- the artificial edge $e_P$ is given by the green edge from $w_{t-1}$ to $w_2$ and the other edges of $\mathcal{Y}$ are given in black. Note that we cannot simply apply the double-move from Case 1 in Theorem~\ref{thm:two_exchanges} as depicted for this scenario in Figure~\ref{fig:case_1_modified_a} (compare to Figure~\ref{fig:case_1}). In the first cyclical move, the edge $e_P$ would be applied, sending the item $x_t$ from $w_{t-1}$ to $w_2$. Hence, $w_{t-1}$ would then be unable to send $x_t$ to $w_t$ in the second sequential move. 

We can address this by making a slight modification to this first cyclical move: instead of sending $x_2$ from $w_1$ to $w_{t-1}$ and then applying $e_P$, simply send $x_2$ directly from $w_1$ to $w_2$. Then $x_t$ remains at $w_{t-1}$, and in the second sequential move, $w_{t-1}$ can send $x_t$ to $w_t$ as seen in Figure~\ref{fig:case_1_modified_b}. 

\begin{figure}
\begin{subfigure}{1.0 \textwidth}
\centering
\begin{tikzpicture}[vertices/.style={draw, fill=black, circle, inner sep=0pt, minimum size = 4pt, outer sep=0pt}]

\node[vertices] (c_1) at (0,1.2) {};
\node[vertices] (c_2) at (0.75, 2.4) {};
\node[vertices] (c_3) at (2, 2.4) {};
\node[vertices] (c_4) at (2.75, 1.2) {};
\node[vertices] (c_5) at (2, 0) {};
\node[vertices] (c_6) at (0.75, 0) {};

\node[vertices] (e_1) at (4.25,0.7) {};
\node[vertices] (e_2) at (4.25, 1.8) {};
\node[vertices] (e_3) at (5.35, 1.8) {};
\node[vertices] (e_4) at (5.35, 0.7) {};

\node[vertices] (d_1) at (6.85,1.2) {};
\node[vertices] (d_2) at (7.6, 2.4) {};
\node[vertices] (d_3) at (8.85, 2.4) {};
\node[vertices] (d_4) at (9.6, 1.2) {};
\node[vertices] (d_5) at (8.85, 0) {};
\node[vertices] (d_6) at (7.6, 0) {};

\foreach \to/\from in {
	c_1/c_2, c_2/c_3, c_3/c_4, c_4/c_5, c_5/c_6, c_6/c_1,
	d_1/d_2, d_3/d_4, d_4/d_5, d_5/d_6, d_6/d_1,
	e_1/e_2, e_2/e_3, e_3/e_4, e_4/e_1}
\draw[draw=black, line width= 1,  ->, >=latex]  (\to)--(\from);

\node[above] at (c_2) {$w_1$};
\node[below] at (d_5) {$w_3$};
\node[above] at (d_2) {$w_{t-1}$};
\node[above right] at (d_3) {$w_{2}$};

\path [draw=green,line width= 1,->,>=latex,green] (d_2) edge[bend left=0] (d_3);

\path [draw=blue,line width= 1,->,>=latex,blue] (c_2) edge[bend left=10] (d_2);
\path [draw=blue,line width= 1,->,>=latex,blue] (d_2) edge[bend left=20] (d_3);
\path [draw=blue,line width= 1,->,>=latex,blue] (d_3) edge[bend left=20] (d_4);
\path [draw=blue,line width= 1,->,>=latex,blue] (d_4) edge[bend left=20] (d_5);
\path [draw=blue,line width= 1,->,>=latex,blue] (d_5) edge[bend right=20] (d_6);
\path [draw=blue,line width= 1,->,>=latex,blue] (d_6) edge[bend left=20] (d_1);
\path [draw=blue,line width= 1,->,>=latex,blue] (d_1) edge[bend left=20] (e_3);
\path [draw=blue,line width= 1,->,>=latex,blue] (e_3) edge[bend right=20] (e_4);
\path [draw=blue,line width= 1,->,>=latex,blue] (e_4) edge[bend left=20] (e_1);
\path [draw=blue,line width= 1,->,>=latex,blue] (e_1) edge[bend left=20] (e_2);
\path [draw=blue,line width= 1,->,>=latex,blue] (e_2) edge[bend left=20] (c_3);
\path [draw=blue,line width= 1,->,>=latex,blue] (c_3) edge[bend right=20] (c_4);
\path [draw=blue,line width= 1,->,>=latex,blue] (c_4) edge[bend left=20] (c_5);
\path [draw=blue,line width= 1,->,>=latex,blue] (c_5) edge[bend left=20] (c_6);
\path [draw=blue,line width= 1,->,>=latex,blue] (c_6) edge[bend left=20] (c_1);
\path [draw=blue,line width= 1,->,>=latex,blue] (c_1) edge[bend right=20] (c_2);

\path [draw=red,line width= 1,->,>=latex,red] (c_2) edge[bend right=15] (c_3);
\path [draw=red,line width= 1,->,>=latex,red] (c_3) edge[bend left=15] (e_3);
\path [draw=red,line width= 1,->,>=latex,red] (e_3) edge[bend right=10] (d_2);
\path [draw=red,line width= 1,->,>=latex,red] (d_2) edge[bend left=10] (d_5);

\end{tikzpicture}
\caption{Original double-move from Case 1 of Theorem~\ref{thm:two_exchanges}.}\label{fig:case_1_modified_a}
\end{subfigure}

\begin{subfigure}{1.0 \textwidth}
\centering
\begin{tikzpicture}[vertices/.style={draw, fill=black, circle, inner sep=0pt, minimum size = 4pt, outer sep=0pt}]

\node[vertices] (c_1) at (0,1.2) {};
\node[vertices] (c_2) at (0.75, 2.4) {};
\node[vertices] (c_3) at (2, 2.4) {};
\node[vertices] (c_4) at (2.75, 1.2) {};
\node[vertices] (c_5) at (2, 0) {};
\node[vertices] (c_6) at (0.75, 0) {};

\node[vertices] (e_1) at (4.25,0.7) {};
\node[vertices] (e_2) at (4.25, 1.8) {};
\node[vertices] (e_3) at (5.35, 1.8) {};
\node[vertices] (e_4) at (5.35, 0.7) {};

\node[vertices] (d_1) at (6.85,1.2) {};
\node[vertices] (d_2) at (7.6, 2.4) {};
\node[vertices] (d_3) at (8.85, 2.4) {};
\node[vertices] (d_4) at (9.6, 1.2) {};
\node[vertices] (d_5) at (8.85, 0) {};
\node[vertices] (d_6) at (7.6, 0) {};

\node (x_2) at (4.9, 3.25) {$x_2$};
\node (x_t) at (8.2, 1.2) {$x_t$};

\foreach \to/\from in {
	c_1/c_2, c_2/c_3, c_3/c_4, c_4/c_5, c_5/c_6, c_6/c_1,
	d_1/d_2, d_3/d_4, d_4/d_5, d_5/d_6, d_6/d_1,
	e_1/e_2, e_2/e_3, e_3/e_4, e_4/e_1}
\draw[draw=black, line width= 1,  ->, >=latex]  (\to)--(\from);

\node[above] at (c_2) {$w_1$};
\node[below left] at (c_3) {};
\node[below] at (d_5) {$w_t$};
\node[right] at (d_1) {};
\node[above] at (e_2) {};
\node[above] at (e_3) {};
\node[above] at (d_2) {$w_{t-1}$};
\node[above right] at (d_3) {$w_{2}$};

\path [draw=green,line width= 1,->,>=latex,green] (d_2) edge[bend right=0] (d_3);

\path [draw=blue,line width= 1,->,>=latex,blue] (c_2) edge[bend left=25] (d_3);
\path [draw=blue,line width= 1,->,>=latex,blue] (d_3) edge[bend left=20] (d_4);
\path [draw=blue,line width= 1,->,>=latex,blue] (d_4) edge[bend left=20] (d_5);
\path [draw=blue,line width= 1,->,>=latex,blue] (d_5) edge[bend right=20] (d_6);
\path [draw=blue,line width= 1,->,>=latex,blue] (d_6) edge[bend left=20] (d_1);
\path [draw=blue,line width= 1,->,>=latex,blue] (d_1) edge[bend left=20] (e_3);
\path [draw=blue,line width= 1,->,>=latex,blue] (e_3) edge[bend right=20] (e_4);
\path [draw=blue,line width= 1,->,>=latex,blue] (e_4) edge[bend left=20] (e_1);
\path [draw=blue,line width= 1,->,>=latex,blue] (e_1) edge[bend left=20] (e_2);
\path [draw=blue,line width= 1,->,>=latex,blue] (e_2) edge[bend left=20] (c_3);
\path [draw=blue,line width= 1,->,>=latex,blue] (c_3) edge[bend right=20] (c_4);
\path [draw=blue,line width= 1,->,>=latex,blue] (c_4) edge[bend left=20] (c_5);
\path [draw=blue,line width= 1,->,>=latex,blue] (c_5) edge[bend left=20] (c_6);
\path [draw=blue,line width= 1,->,>=latex,blue] (c_6) edge[bend left=20] (c_1);
\path [draw=blue,line width= 1,->,>=latex,blue] (c_1) edge[bend right=20] (c_2);

\path [draw=red,line width= 1,->,>=latex,red] (c_2) edge[bend right=15] (c_3);
\path [draw=red,line width= 1,->,>=latex,red] (c_3) edge[bend left=15] (e_3);
\path [draw=red,line width= 1,->,>=latex,red] (e_3) edge[bend right=10] (d_2);
\path [draw=red,line width= 1,->,>=latex,red] (d_2) edge[bend left=10] (d_5);

\end{tikzpicture}
\caption{A modified double-move for Case 1.}\label{fig:case_1_modified_b}
\end{subfigure}
\caption{The original and modified double-move for Case 1.}\label{fig:case_1_modified}
\end{figure}
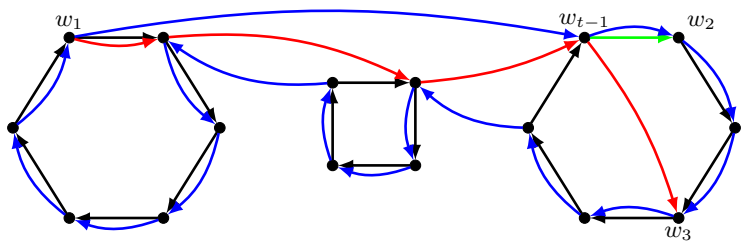
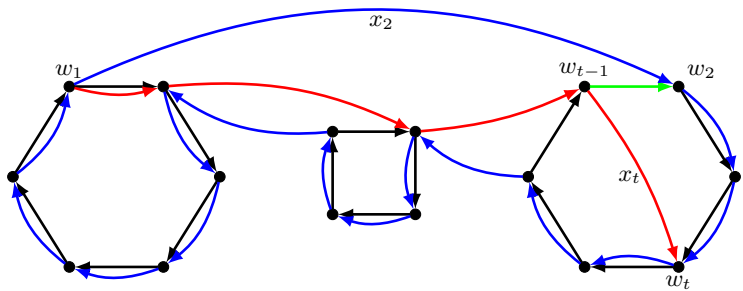

Note that in this modified double-move, the artificial edge $e_P$ from $w_{t-1}$ to $w_2$ is never actually applied. However, its intended purpose is accomplished: item $x_2$ is correctly received by $w_2$ from $w_1$, and $x_t$ is correctly sent from $w_{t-1}$ to $w_t$. Therefore, after the double-move is applied, we can remove $P'$ from $D_P$ and $e_P$ along with the other edges of $\mathcal{Y}$ from $D_Y$, as desired. 

\vspace{.1in}
\textbf{Case 2:} The first and last vertices of $P' = w_1 w_{t-1} w_t$ which are covered by $\mathcal{Y}$ belong to different cycles, and the second-to-last vertex covered by $\mathcal{Y}$ belongs to a different cycle than the last vertex. There are three double-moves based on the double-move from Case 2 of Theorem~\ref{thm:two_exchanges} which can be used depending on whether or not $w_1$, $w_t$, or both $w_1$ and $w_t$ are covered by $\mathcal{Y}$. Depictions of these moves are given in Figure~\ref{fig:case_2_modified}.

\begin{enumerate}[a)]
\item Vertices $w_1$ and $w_t$ are both covered by $\mathcal{Y}$. In this situation, for Case 2 to apply,  $w_1$ and $w_t$ must belong to different cycles of $\mathcal{Y}$ and $w_{t-1}$ must not belong to the same cycle as $w_t$. See Figure~\ref{fig:case_2_modified_a}. Then when performing the double-move from Case 2 of Theorem~\ref{thm:two_exchanges} as depicted in Figure~\ref{fig:case_2}, the edge $e_P$ is applied in the first sequential move before any of the edges from $P'$, sending $x_t$ from $w_{t-1}$ to $w_2$. Hence, if we switch the interior vertex of $P'$ from $w_{t-1}$ to $w_2$, we can apply this original double-move without any further modifications, as depicted in Figure~\ref{fig:case_2_modified_a}. 

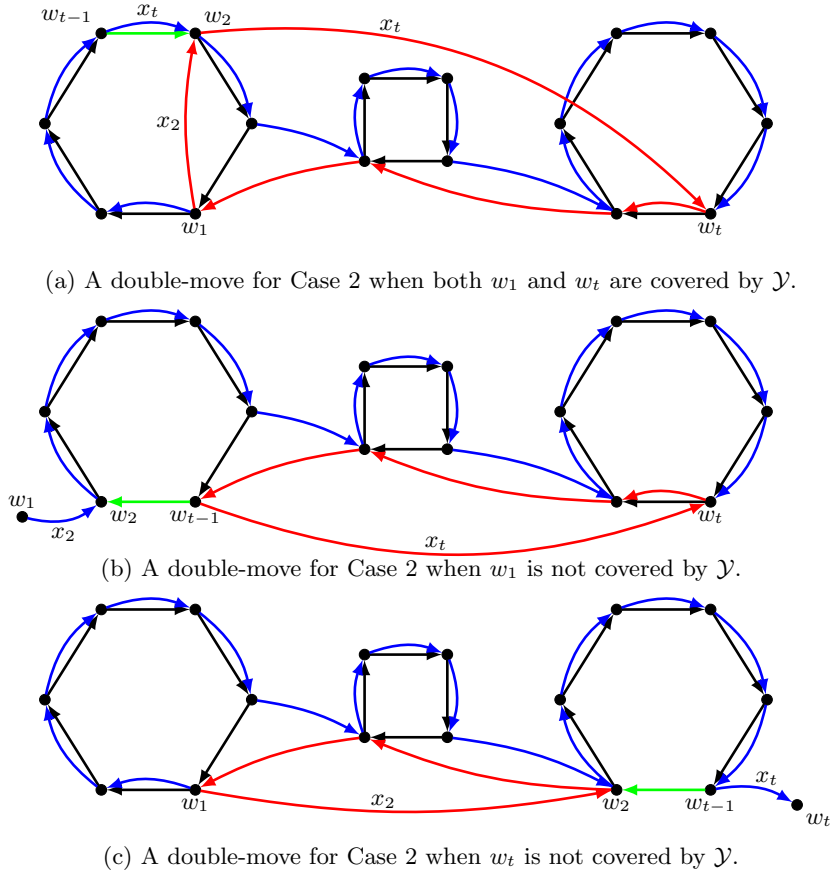
\begin{figure}[t]
\centering
\begin{subfigure}{1.0 \textwidth}
\centering
\begin{tikzpicture}[vertices/.style={draw, fill=black, circle, inner sep=0pt, minimum size = 4pt, outer sep=0pt}]
\useasboundingbox (-0.5,-0.5) rectangle (10.5,2.7);

\node[vertices] (c_1) at (0,1.2) {};
\node[vertices] (c_2) at (0.75, 2.4) {};
\node[vertices] (c_3) at (2, 2.4) {};
\node[vertices] (c_4) at (2.75, 1.2) {};
\node[vertices] (c_5) at (2, 0) {};
\node[vertices] (c_6) at (0.75, 0) {};

\node[vertices] (e_1) at (4.25,0.7) {};
\node[vertices] (e_2) at (4.25, 1.8) {};
\node[vertices] (e_3) at (5.35, 1.8) {};
\node[vertices] (e_4) at (5.35, 0.7) {};

\node[vertices] (d_1) at (6.85,1.2) {};
\node[vertices] (d_2) at (7.6, 2.4) {};
\node[vertices] (d_3) at (8.85, 2.4) {};
\node[vertices] (d_4) at (9.6, 1.2) {};
\node[vertices] (d_5) at (8.85, 0) {};
\node[vertices] (d_6) at (7.6, 0) {};

\node (x_t1) at (1.375, 2.7) {$x_t$};
\node (x_t2) at (4.6, 2.5) {$x_t$};
\node (x_2) at (1.65, 1.2) {$x_2$};

\foreach \to/\from in {
	c_1/c_2, c_3/c_4, c_4/c_5, c_5/c_6, c_6/c_1,
	d_1/d_2, d_2/d_3, d_3/d_4, d_4/d_5, d_5/d_6, d_6/d_1,
	e_1/e_2, e_2/e_3, e_3/e_4, e_4/e_1}
\draw[draw=black, line width= 1,  ->, >=latex]  (\to)--(\from);

\node[below] at (c_5) {$w_{1}$};
\node[below] at (d_5) {$w_{t}$};
\node[above right] at (c_3) {$w_{2}$};
\node[above left] at (c_2) {$w_{t-1}$};

\path [draw=green,line width= 1,->,>=latex,green] (c_2) edge[bend right=0] (c_3);

\path [draw=blue,line width= 1,->,>=latex,blue] (c_5) edge[bend right=20] (c_6);
\path [draw=blue,line width= 1,->,>=latex,blue] (c_6) edge[bend left=20] (c_1);
\path [draw=blue,line width= 1,->,>=latex,blue] (c_1) edge[bend left=20] (c_2);
\path [draw=blue,line width= 1,->,>=latex,blue] (c_2) edge[bend left=20] (c_3);
\path [draw=blue,line width= 1,->,>=latex,blue] (c_3) edge[bend left=20] (c_4);
\path [draw=blue,line width= 1,->,>=latex,blue] (c_4) edge[bend left=10] (e_1);
\path [draw=blue,line width= 1,->,>=latex,blue] (e_1) edge[bend left=20] (e_2);
\path [draw=blue,line width= 1,->,>=latex,blue] (e_2) edge[bend left=20] (e_3);
\path [draw=blue,line width= 1,->,>=latex,blue] (e_3) edge[bend left=20] (e_4);
\path [draw=blue,line width= 1,->,>=latex,blue] (e_4) edge[bend left=10] (d_6);
\path [draw=blue,line width= 1,->,>=latex,blue] (d_6) edge[bend left=20] (d_1);;
\path [draw=blue,line width= 1,->,>=latex,blue] (d_1) edge[bend left=20] (d_2);
\path [draw=blue,line width= 1,->,>=latex,blue] (d_2) edge[bend left=20] (d_3);
\path [draw=blue,line width= 1,->,>=latex,blue] (d_3) edge[bend left=20] (d_4);
\path [draw=blue,line width= 1,->,>=latex,blue] (d_4) edge[bend left=20] (d_5);

\path [draw=red,line width= 1,->,>=latex,red] (d_5) edge[bend right=20] (d_6);
\path [draw=red,line width= 1,->,>=latex,red] (d_6) edge[bend left=10] (e_1);
\path [draw=red,line width= 1,->,>=latex,red] (e_1) edge[bend right=10] (c_5);
\path [draw=red,line width= 1,->,>=latex,red] (c_5) edge[bend left=10] (c_3);
\path [draw=red,line width= 1,->,>=latex,red] (c_3) edge[bend left=25] (d_5);
\end{tikzpicture}
\caption{A double-move for Case 2 when both $w_1$ and $w_t$ are covered by $\mathcal{Y}$.}\label{fig:case_2_modified_a}
\end{subfigure}
\begin{subfigure}{1.0 \textwidth}
\centering
\begin{tikzpicture}[vertices/.style={draw, fill=black, circle, inner sep=0pt, minimum size = 4pt, outer sep=0pt}]

\useasboundingbox (-0.5,-0.5) rectangle (10.5,2.7);

\node[vertices] (c_1) at (0,1.2) {};
\node[vertices] (c_2) at (0.75, 2.4) {};
\node[vertices] (c_3) at (2, 2.4) {};
\node[vertices] (c_4) at (2.75, 1.2) {};
\node[vertices] (c_5) at (2, 0) {};
\node[vertices] (c_6) at (0.75, 0) {};

\node[vertices] (e_1) at (4.25,0.7) {};
\node[vertices] (e_2) at (4.25, 1.8) {};
\node[vertices] (e_3) at (5.35, 1.8) {};
\node[vertices] (e_4) at (5.35, 0.7) {};

\node[vertices] (d_1) at (6.85,1.2) {};
\node[vertices] (d_2) at (7.6, 2.4) {};
\node[vertices] (d_3) at (8.85, 2.4) {};
\node[vertices] (d_4) at (9.6, 1.2) {};
\node[vertices] (d_5) at (8.85, 0) {};
\node[vertices] (d_6) at (7.6, 0) {};

\node[vertices] (w_1) at (-0.3,-0.2) {};

\foreach \to/\from in {
	c_1/c_2, c_2/c_3, c_3/c_4, c_4/c_5, c_6/c_1,
	d_1/d_2, d_2/d_3, d_3/d_4, d_4/d_5, d_5/d_6, d_6/d_1,
	e_1/e_2, e_2/e_3, e_3/e_4, e_4/e_1}
\draw[draw=black, line width= 1,  ->, >=latex]  (\to)--(\from);

\node (x_2) at (0.25, -0.43) {$x_2$};
\node (x_t) at (5.2, -0.55) {$x_t$};

\node[above] at (w_1) {$w_1$};
\node[below] at (c_5) {$w_{t-1}$};
\node[below right] at (c_6) {$w_{2}$};
\node[below] at (d_5) {$w_{t}$};

\path [draw=green,line width= 1,->,>=latex,green] (c_5) edge (c_6);

\path [draw=blue,line width= 1,->,>=latex,blue] (w_1) edge[bend right=25] (c_6);
\path [draw=blue,line width= 1,->,>=latex,blue] (c_6) edge[bend left=20] (c_1);
\path [draw=blue,line width= 1,->,>=latex,blue] (c_1) edge[bend left=20] (c_2);
\path [draw=blue,line width= 1,->,>=latex,blue] (c_2) edge[bend left=20] (c_3);
\path [draw=blue,line width= 1,->,>=latex,blue] (c_3) edge[bend left=20] (c_4);
\path [draw=blue,line width= 1,->,>=latex,blue] (c_4) edge[bend left=10] (e_1);
\path [draw=blue,line width= 1,->,>=latex,blue] (e_1) edge[bend left=20] (e_2);
\path [draw=blue,line width= 1,->,>=latex,blue] (e_2) edge[bend left=20] (e_3);
\path [draw=blue,line width= 1,->,>=latex,blue] (e_3) edge[bend left=20] (e_4);
\path [draw=blue,line width= 1,->,>=latex,blue] (e_4) edge[bend left=10] (d_6);
\path [draw=blue,line width= 1,->,>=latex,blue] (d_6) edge[bend left=20] (d_1);;
\path [draw=blue,line width= 1,->,>=latex,blue] (d_1) edge[bend left=20] (d_2);
\path [draw=blue,line width= 1,->,>=latex,blue] (d_2) edge[bend left=20] (d_3);
\path [draw=blue,line width= 1,->,>=latex,blue] (d_3) edge[bend left=20] (d_4);
\path [draw=blue,line width= 1,->,>=latex,blue] (d_4) edge[bend left=20] (d_5);

\path [draw=red,line width= 1,->,>=latex,red] (d_5) edge[bend right=20] (d_6);
\path [draw=red,line width= 1,->,>=latex,red] (d_6) edge[bend left=10] (e_1);
\path [draw=red,line width= 1,->,>=latex,red] (e_1) edge[bend right=10] (c_5);
\path [draw=red,line width= 1,->,>=latex,red] (c_5) edge[bend right=20] (d_5);

\end{tikzpicture}
\caption{A double-move for Case 2 when $w_1$ is not covered by $\mathcal{Y}.$}\label{fig:case_2_modified_b}
\end{subfigure}
\begin{subfigure}{1.0 \textwidth}
\centering
\begin{tikzpicture}[vertices/.style={draw, fill=black, circle, inner sep=0pt, minimum size = 4pt, outer sep=0pt}]
\useasboundingbox (-0.5,-0.5) rectangle (10.5,2.7);

\node[vertices] (c_1) at (0,1.2) {};
\node[vertices] (c_2) at (0.75, 2.4) {};
\node[vertices] (c_3) at (2, 2.4) {};
\node[vertices] (c_4) at (2.75, 1.2) {};
\node[vertices] (c_5) at (2, 0) {};
\node[vertices] (c_6) at (0.75, 0) {};

\node[vertices] (e_1) at (4.25,0.7) {};
\node[vertices] (e_2) at (4.25, 1.8) {};
\node[vertices] (e_3) at (5.35, 1.8) {};
\node[vertices] (e_4) at (5.35, 0.7) {};

\node[vertices] (d_1) at (6.85,1.2) {};
\node[vertices] (d_2) at (7.6, 2.4) {};
\node[vertices] (d_3) at (8.85, 2.4) {};
\node[vertices] (d_4) at (9.6, 1.2) {};
\node[vertices] (d_5) at (8.85, 0) {};
\node[vertices] (d_6) at (7.6, 0) {};

\node[vertices] (w_2) at (10.0, -0.2) {};

\foreach \to/\from in {
	c_1/c_2, c_2/c_3, c_3/c_4, c_4/c_5, c_5/c_6, c_6/c_1,
	d_1/d_2, d_2/d_3, d_3/d_4, d_4/d_5, d_6/d_1,
	e_1/e_2, e_2/e_3, e_3/e_4, e_4/e_1}
\draw[draw=black, line width= 1,  ->, >=latex]  (\to)--(\from);

\node[below right] at (w_2) {$w_t$};
\node[below] at (c_5) {$w_{1}$};
\node[below] at (d_5) {$w_{t-1}$};
\node[below] at (d_6) {$w_2$};

\node (x_2) at (4.5, -0.15) {$x_2$};
\node (x_t) at (9.6, 0.15) {$x_t$};

\path [draw=green,line width= 1,->,>=latex,green] (d_5) edge (d_6);

\path [draw=blue,line width= 1,->,>=latex,blue] (c_5) edge[bend right=20] (c_6);
\path [draw=blue,line width= 1,->,>=latex,blue] (c_6) edge[bend left=20] (c_1);
\path [draw=blue,line width= 1,->,>=latex,blue] (c_1) edge[bend left=20] (c_2);
\path [draw=blue,line width= 1,->,>=latex,blue] (c_2) edge[bend left=20] (c_3);
\path [draw=blue,line width= 1,->,>=latex,blue] (c_3) edge[bend left=20] (c_4);
\path [draw=blue,line width= 1,->,>=latex,blue] (c_4) edge[bend left=10] (e_1);
\path [draw=blue,line width= 1,->,>=latex,blue] (e_1) edge[bend left=20] (e_2);
\path [draw=blue,line width= 1,->,>=latex,blue] (e_2) edge[bend left=20] (e_3);
\path [draw=blue,line width= 1,->,>=latex,blue] (e_3) edge[bend left=20] (e_4);
\path [draw=blue,line width= 1,->,>=latex,blue] (e_4) edge[bend left=10] (d_6);
\path [draw=blue,line width= 1,->,>=latex,blue] (d_6) edge[bend left=20] (d_1);;
\path [draw=blue,line width= 1,->,>=latex,blue] (d_1) edge[bend left=20] (d_2);
\path [draw=blue,line width= 1,->,>=latex,blue] (d_2) edge[bend left=20] (d_3);
\path [draw=blue,line width= 1,->,>=latex,blue] (d_3) edge[bend left=20] (d_4);
\path [draw=blue,line width= 1,->,>=latex,blue] (d_4) edge[bend left=20] (d_5);
\path [draw=blue,line width= 1,->,>=latex,blue] (d_5) edge[bend left=20] (w_2);

\path [draw=red,line width= 1,->,>=latex,red] (d_6) edge[bend left=10] (e_1);
\path [draw=red,line width= 1,->,>=latex,red] (e_1) edge[bend right=10] (c_5);
\path [draw=red,line width= 1,->,>=latex,red] (c_5) edge[bend right=10] (d_6);

\end{tikzpicture}
\caption{A double-move for Case 2 when $w_t$ is not covered by $\mathcal{Y}$.}\label{fig:case_2_modified_c}
\end{subfigure}
\caption{Modified double-moves for Case 2.}\label{fig:case_2_modified}
\end{figure}

\item Vertex $w_1$ is not covered by $\mathcal{Y}$. Then for Case 2 to apply, $w_{t-1}$ and $w_t$ must belong to different cycles of $\mathcal{Y}$. See Figure~\ref{fig:case_2_modified_b}. As in Case 1, we cannot apply the original double-move since then $w_{t-1}$ would be unable to send $x_t$ to $w_t$ in the second move. We can address this in the same way as in the modified double-move from Case 1: send $x_2$ directly from $w_1$ to $w_2$ in the first sequential move and then send $x_t$ directly from $w_{t-1}$ to $w_t$ in the second cyclical move, as depicted in Figure~\ref{fig:case_2_modified_b}. Although $e_P$ is never actually applied, all desired transfers are accomplished as in Case 1.

\item Vertex $w_t$ is not covered by $\mathcal{Y}$. Then for Case 2 to apply, $w_1$ and $w_{t-1}$ must belong to different cycles of $\mathcal{Y}$. See Figure~\ref{fig:case_2_modified_c}. We make a similar modification to that of the previous case: correctly send $x_t$ directly from $w_{t-1}$ to $w_t$ in the first sequential move and then correctly send $x_2$ directly from $w_1$ to $w_2$ in the second cyclical move, as depicted in Figure~\ref{fig:case_2_modified_c}.
\end{enumerate}

\textbf{Case 3:} The first and last vertices of $P = w_1 w_{t-1} w_t$ which are covered by $\mathcal{Y}$ belong to the same cycle in $\mathcal{Y}$, while the second-to-last vertex belongs to a different cycle. The only scenario in which this case applies is when $w_1$ and $w_t$ belong to the same cycle of $\mathcal{Y}$ and $w_{t-1}$ belongs to a different cycle. We make a modification similar to the third double-move from the previous case. In a first cyclical move send $x_t$ directly from $w_{t-1}$ to $w_t$, and in a second sequential move send $x_2$ directly from $w_1$ to $w_2$. See Figure~\ref{fig:case_3_modified}.

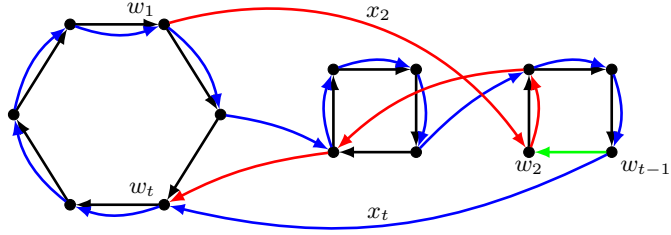
\begin{figure}[t]
\centering
\begin{tikzpicture}[vertices/.style={draw, fill=black, circle, inner sep=0pt, minimum size = 4pt, outer sep=0pt}]
\useasboundingbox (-0.25,-0.25) rectangle (8.5,2.7);

\node[vertices] (c_1) at (0,1.2) {};
\node[vertices] (c_2) at (0.75, 2.4) {};
\node[vertices] (c_3) at (2, 2.4) {};
\node[vertices] (c_4) at (2.75, 1.2) {};
\node[vertices] (c_5) at (2, 0) {};
\node[vertices] (c_6) at (0.75, 0) {};

\node[vertices] (e_1) at (4.25,0.7) {};
\node[vertices] (e_2) at (4.25, 1.8) {};
\node[vertices] (e_3) at (5.35, 1.8) {};
\node[vertices] (e_4) at (5.35, 0.7) {};

\node[vertices] (d_1) at (6.85,0.7) {};
\node[vertices] (d_2) at (6.85, 1.8) {};
\node[vertices] (d_3) at (7.95, 1.8) {};
\node[vertices] (d_4) at (7.95, 0.7) {};

\foreach \to/\from in {
	c_1/c_2, c_2/c_3, c_3/c_4, c_4/c_5, c_5/c_6, c_6/c_1,
	d_1/d_2, d_2/d_3, d_3/d_4,
	e_1/e_2, e_2/e_3, e_3/e_4, e_4/e_1}
\draw[draw=black, line width= 1,  ->, >=latex]  (\to)--(\from);

\node (x_2) at (4.85, 2.55) {$x_2$};
\node (x_2) at (4.85, -0.12) {$x_t$};

\node[above left] at (c_3) {$w_{1}$};
\node[above left] at (c_5) {$w_{t}$};;
\node[below] at (d_1) {$w_{2}$};
\node[below right] at (d_4) {$w_{t-1}$};
\path [draw=green,line width= 1,->,>=latex,green] (d_4) edge[bend right=0] (d_1);

\path [draw=blue,line width= 1,->,>=latex,blue] (c_5) edge[bend left=20] (c_6);
\path [draw=blue,line width= 1,->,>=latex,blue] (c_6) edge[bend left=20] (c_1);
\path [draw=blue,line width= 1,->,>=latex,blue] (c_1) edge[bend left=20] (c_2);
\path [draw=blue,line width= 1,->,>=latex,blue] (c_2) edge[bend right=20] (c_3);
\path [draw=blue,line width= 1,->,>=latex,blue] (c_3) edge[bend left=20] (c_4);
\path [draw=blue,line width= 1,->,>=latex,blue] (c_4) edge[bend left=10] (e_1);
\path [draw=blue,line width= 1,->,>=latex,blue] (e_1) edge[bend left=20] (e_2);
\path [draw=blue,line width= 1,->,>=latex,blue] (e_2) edge[bend left=20] (e_3);
\path [draw=blue,line width= 1,->,>=latex,blue] (e_3) edge[bend left=20] (e_4);
\path [draw=blue,line width= 1,->,>=latex,blue] (e_4) edge[bend left=10] (d_2);
\path [draw=blue,line width= 1,->,>=latex,blue] (d_2) edge[bend left=20] (d_3);
\path [draw=blue,line width= 1,->,>=latex,blue] (d_3) edge[bend left=20] (d_4);
\path [draw=blue,line width= 1,->,>=latex,blue] (d_4) edge[bend left=20] (c_5);

\path [draw=red,line width= 1,->,>=latex,red] (c_3) edge[bend left=35] (d_1);
\path [draw=red,line width= 1,->,>=latex,red] (d_1) edge[bend right=20] (d_2);
\path [draw=red,line width= 1,->,>=latex,red] (d_2) edge[bend right=20] (e_1);
\path [draw=red,line width= 1,->,>=latex,red] (e_1) edge[bend right=10] (c_5);

\end{tikzpicture}
\caption{A modified double-move for Case 3.}\label{fig:case_3_modified}
\end{figure}

\textbf{Case 4:} All vertices of $P' = w_1 w_{t-1} w_t$ which are covered by $\mathcal{Y}$ belong to the same cycle. There are two double-moves, depicted in Figure~\ref{fig:case_4_modified}, which can be used depending on whether or not $w_1$ is covered by $\mathcal{Y}$.

\begin{enumerate}[a)]

\item Vertex $w_1$ is covered by $\mathcal{Y}$. As in the first double-move for Case 2, in the original double-move for Case 4 of Theorem~\ref{thm:two_exchanges} the edge $e_P$ is applied before any of the edges from $P'$. Hence, if we switch the interior vertex of $P'$ from $w_{t-1}$ to $w_2$, we can apply the double-move without any further modifications, as depicted in Figure~\ref{fig:case_4_modified_a}. Note that $w_t$ may or may not be covered by the cycle containing $w_1$ and $e_P$. 

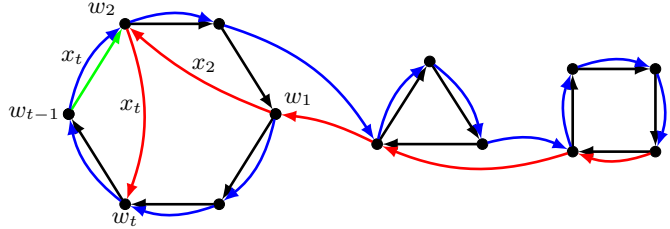
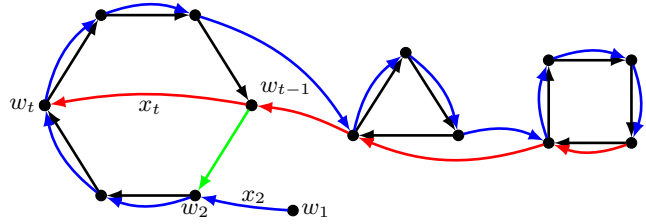
\begin{figure}[t]
\centering
\begin{subfigure}{1.0\textwidth}
\centering
\begin{tikzpicture}[vertices/.style={draw, fill=black, circle, inner sep=0pt, minimum size = 4pt, outer sep=0pt}]
\useasboundingbox (-0.25,-0.25) rectangle (8.5,2.7);

\node[vertices] (c_1) at (0,1.2) {};
\node[vertices] (c_2) at (0.75, 2.4) {};
\node[vertices] (c_3) at (2, 2.4) {};
\node[vertices] (c_4) at (2.75, 1.2) {};
\node[vertices] (c_5) at (2, 0) {};
\node[vertices] (c_6) at (0.75, 0) {};

\node[vertices] (w_2) at (2.75, 1.2) {};
\node[vertices] (w_4) at (0, 1.2) {};

\node[vertices] (d_1) at (4.1,0.8) {};
\node[vertices] (d_2) at (4.8, 1.9) {};
\node[vertices] (d_3) at (5.5, 0.8) {};

\node[vertices] (e_1) at (6.7,0.7) {};
\node[vertices] (e_2) at (6.7, 1.8) {};
\node[vertices] (e_3) at (7.8, 1.8) {};
\node[vertices] (e_4) at (7.8, 0.7) {};

\foreach \to/\from in {
	c_2/c_3, c_3/c_4, c_4/c_5, c_5/c_6, c_6/c_1,
	d_1/d_2, d_2/d_3, d_3/d_1,
	e_1/e_2, e_2/e_3, e_3/e_4, e_4/e_1}
\draw[draw=black, line width= 1,  ->, >=latex]  (\to)--(\from);

\node[above right] at (w_2) {$w_{1}$};
\node[below] at (c_6) {$w_{t}$};
\node[above left] at (c_2) {$w_{2}$};
\node[left] at (c_1) {$w_{t-1}$};

\node (x_2) at (0.05, 1.95) {$x_t$};
\node (x_2) at (1.8, 1.85) {$x_2$};
\node (x_2) at (0.83, 1.3) {$x_t$};

\path [draw=green,line width= 1,->,>=latex,green] (c_1) edge[bend right=0] (c_2);

\path [draw=blue,line width= 1,->,>=latex,blue] (c_4) edge[bend left=20] (c_5);
\path [draw=blue,line width= 1,->,>=latex,blue] (c_5) edge[bend left=20] (c_6);
\path [draw=blue,line width= 1,->,>=latex,blue] (c_6) edge[bend left=20] (c_1);
\path [draw=blue,line width= 1,->,>=latex,blue] (c_1) edge[bend left=20] (c_2);
\path [draw=blue,line width= 1,->,>=latex,blue] (c_2) edge[bend left=20] (c_3);
\path [draw=blue,line width= 1,->,>=latex,blue] (c_3) edge[bend left=20] (d_1);
\path [draw=blue,line width= 1,->,>=latex,blue] (d_1) edge[bend left=20] (d_2);
\path [draw=blue,line width= 1,->,>=latex,blue] (d_2) edge[bend left=20] (d_3);
\path [draw=blue,line width= 1,->,>=latex,blue] (d_3) edge[bend left=20] (e_1);
\path [draw=blue,line width= 1,->,>=latex,blue] (e_1) edge[bend left=20] (e_2);
\path [draw=blue,line width= 1,->,>=latex,blue] (e_2) edge[bend left=20] (e_3);
\path [draw=blue,line width= 1,->,>=latex,blue] (e_3) edge[bend left=20] (e_4);

\path [draw=red,line width= 1,->,>=latex,red] (e_4) edge[bend left=20] (e_1);
\path [draw=red,line width= 1,->,>=latex,red] (e_1) edge[bend left=20] (d_1);
\path [draw=red,line width= 1,->,>=latex,red] (d_1) edge[bend right=10] (w_2);
\path [draw=red,line width= 1,->,>=latex,red] (w_2) edge[bend left=10] (c_2);
\path [draw=red,line width= 1,->,>=latex,red] (c_2) edge[bend left=20] (c_6);

\end{tikzpicture}
\caption{A double-move for Case 4 when $w_1$ is covered by $\mathcal{Y}$.}\label{fig:case_4_modified_a}
\end{subfigure}

\begin{subfigure}{1.0 \textwidth}
\centering
\begin{tikzpicture}[vertices/.style={draw, fill=black, circle, inner sep=0pt, minimum size = 4pt, outer sep=0pt}]
\useasboundingbox (-0.25,-0.25) rectangle (8.5,2.7);

\node[vertices] (c_1) at (0,1.2) {};
\node[vertices] (c_2) at (0.75, 2.4) {};
\node[vertices] (c_3) at (2, 2.4) {};
\node[vertices] (c_4) at (2.75, 1.2) {};
\node[vertices] (c_5) at (2, 0) {};
\node[vertices] (c_6) at (0.75, 0) {};

\node[vertices] (w_1) at (3.3,-0.2) {};
\node[vertices] (w_2) at (2.75, 1.2) {};
\node[vertices] (w_4) at (0, 1.2) {};

\node[vertices] (d_1) at (4.1,0.8) {};
\node[vertices] (d_2) at (4.8, 1.9) {};
\node[vertices] (d_3) at (5.5, 0.8) {};

\node[vertices] (e_1) at (6.7,0.7) {};
\node[vertices] (e_2) at (6.7, 1.8) {};
\node[vertices] (e_3) at (7.8, 1.8) {};
\node[vertices] (e_4) at (7.8, 0.7) {};

\foreach \to/\from in {
	c_1/c_2, c_2/c_3, c_3/c_4, c_5/c_6, c_6/c_1,
	d_1/d_2, d_2/d_3, d_3/d_1,
	e_1/e_2, e_2/e_3, e_3/e_4, e_4/e_1}
\draw[draw=black, line width= 1,  ->, >=latex]  (\to)--(\from);

\node (x_2) at (2.78, -0.02) {$x_2$};
\node (x_t) at (1.4, 1.17) {$x_t$};

\node[right] at (w_1) {$w_1$};
\node[above right] at (w_2) {$w_{t-1}$};
\node[below] at (c_5) {$w_{2}$};
\node[left] at (c_1) {$w_{t}$};

\path [draw=green,line width= 1,->,>=latex,green] (c_4) edge[bend left=0] (c_5);

\path [draw=blue,line width= 1,->,>=latex,blue] (w_1) edge[bend left=5] (c_5);
\path [draw=blue,line width= 1,->,>=latex,blue] (c_5) edge[bend left=20] (c_6);
\path [draw=blue,line width= 1,->,>=latex,blue] (c_6) edge[bend left=20] (c_1);
\path [draw=blue,line width= 1,->,>=latex,blue] (c_1) edge[bend left=20] (c_2);
\path [draw=blue,line width= 1,->,>=latex,blue] (c_2) edge[bend left=20] (c_3);
\path [draw=blue,line width= 1,->,>=latex,blue] (c_3) edge[bend left=20] (d_1);
\path [draw=blue,line width= 1,->,>=latex,blue] (d_1) edge[bend left=20] (d_2);
\path [draw=blue,line width= 1,->,>=latex,blue] (d_2) edge[bend left=20] (d_3);
\path [draw=blue,line width= 1,->,>=latex,blue] (d_3) edge[bend left=20] (e_1);
\path [draw=blue,line width= 1,->,>=latex,blue] (e_1) edge[bend left=20] (e_2);
\path [draw=blue,line width= 1,->,>=latex,blue] (e_2) edge[bend left=20] (e_3);
\path [draw=blue,line width= 1,->,>=latex,blue] (e_3) edge[bend left=20] (e_4);

\path [draw=red,line width= 1,->,>=latex,red] (e_4) edge[bend left=20] (e_1);
\path [draw=red,line width= 1,->,>=latex,red] (e_1) edge[bend left=20] (d_1);
\path [draw=red,line width= 1,->,>=latex,red] (d_1) edge[bend right=10] (w_2);
\path [draw=red,line width= 1,->,>=latex,red] (w_2) edge[bend right=10] (c_1);

\end{tikzpicture}
\caption{A double-move for Case 4 when $w_1$ is not covered by $\mathcal{Y}$.}\label{fig:case_4_modified_b}
\end{subfigure}
\caption{Modified double-moves for Case 4.}\label{fig:case_4_modified}
\end{figure}

\item Vertex $w_1$ is not covered by $\mathcal{Y}$. We make a modification similar to that of the second double-move for Case 2. In a first sequential move, directly send $x_2$ from $w_1$ to $w_2$, and in a second sequential move, directly send $x_t$ from $w_{t-1}$ to $w_{t}$. See Figure~\ref{fig:case_4_modified_b}. Note again that $w_{t}$ may or may not be covered by the cycle containing $e_P$. 
\end{enumerate}

In each case, we are able to integrate $P'$ with $\mathcal{Y}$ and apply all necessary transfers in only two moves. Therefore, at most $\eta_{i_2}$ double-moves are needed to reduce the shared degree of $D_Y$ to zero, and through each of these double-moves, we remove one of the $\frac{1}{2}\sum_{i=1}^k \delta_i$ paths from $D_P$. Afterwards, we may simply apply the remaining paths in $D_P$, if any, individually. The total number of moves used to transform $\C$ into $\C'$ is thus at most
\begin{align*}
(\eta_{i_1} - \eta_{i_2}) + 2 \eta_{i_2} + \max \left\lbrace \frac{1}{2}\sum_{i=1}^k \delta_i - \eta_{i_2} , \ 0 \right\rbrace = \eta_{i_1} + \max \left\lbrace \eta_{i_2}, \ \frac{1}{2}\sum_{i=1}^k \delta_i \right\rbrace.
\end{align*} \eoproof
\end{proof}

\section{Circuit Diameter of Partition Polytopes}\label{sec:diameter}

A fundamental open question in linear programming is whether or not there exists a polynomial pivot rule for the simplex method. The existence of such a pivot rule would require that the \textit{polynomial Hirsch conjecture} \cite{ks-10} holds; i.e., that the combinatorial diameter of a polyhedron can be polynomially bounded. A recent effort to better understand the combinatorial diameter of polyhedra has been the study of the related circuit diameter \cite{bfh-14,bdf-16,bsy-18,kps-17}. Whereas the original Hirsch conjecture is false in general \cite{kw-67,s-11}, the related \textit{Circuit Diameter Conjecture} \cite{bfh-14} remains open.

Recall that the circuits of the bounded-size partition polytope $BPP$ correspond to cyclical and sequential moves of items among clusters. Therefore, as long as no cluster size constraints are violated during a clustering transformation, any resulting bounds on the transformation distance between clusterings have implications on the circuit distance between vertices in $BPP$. As a 0/1-polytope, the combinatorial diameter (and hence, also the circuit diameter) of $BPP$ satisfies the Hirsch conjecture \cite{n-89} -- specifically, the combinatorial diameter is at most the number of items $n$. In this section, we will use the results from Section~\ref{sec:bounds} to achieve much better upper bounds on the circuit diameter.

For the fixed-size partition polytope, Proposition~\ref{prop:disjoint_cycles} can be used to show that the combinatorial diameter is at most $\kappa_1 + \kappa_2$, where $\kappa_1, \kappa_2$ are the two largest fixed cluster sizes \cite{b-13}. We begin by generalizing this bound to the circuit diameter of the bounded-size partition polytope by also taking into account the largest possible change in cluster sizes. Although we do not yet utilize any double-moves which integrate sequential and cyclical moves (see the upcoming Theorem~\ref{thm:improved_diameter}), the bound of the following lemma is already better than the naive bound achieved by simply counting the sequential and cyclical moves separately -- we can relate the shared degree of a vertex in a $CDG$ to the change in size of the corresponding cluster.

\begin{lemma}\label{lem:naive_diameter}
For a bounded-size partition polytope $BPP(\kappa^+, \kappa^-)$, assume the corresponding clusters are indexed so that $\kappa_1^+ \geq \cdots \geq \kappa_k^+$ and let $i_1, i_2$ denote the two indices minimizing $\kappa_i^+ - \kappa_i^-$. Then the circuit diameter of $BPP(\kappa^+, \kappa^-)$ is at most
\begin{align*}
\kappa^+_{1} + \kappa^+_{2} + \frac{1}{2}\sum_{i \neq i_1, i_2}^k (\kappa_i^+ - \kappa_i^-).
\end{align*}
\end{lemma}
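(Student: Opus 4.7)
The plan is to apply Lemma~\ref{lem:naive_distance_bound} to an arbitrary pair of vertices $\C, \C'$ of $BPP(\kappa^+, \kappa^-)$, and then convert the resulting transformation distance bound into the claimed $\kappa$-based expression. Two elementary per-cluster inequalities drive this conversion: the obvious range bound $\delta_i \leq \kappa_i^+ - \kappa_i^-$, and a ``total activity'' bound
\begin{equation*}
\eta_i + \delta_i \;\leq\; \kappa_i^+,
\end{equation*}
which holds because $\eta_i = \min(|C_i|, |C_i'|) - |C_i \cap C_i'| \leq \min(|C_i|, |C_i'|)$, while $\max(|C_i|, |C_i'|) = \min(|C_i|, |C_i'|) + \delta_i$ is at most $\kappa_i^+$.

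Let $j_1, j_2$ denote the two indices with largest shared degree in $\CDG$. Lemma~\ref{lem:naive_distance_bound} then gives $d(\C, \C') \leq \eta_{j_1} + \eta_{j_2} + \tfrac{1}{2}\sum_i \delta_i$. The plan is to fold $\delta_{j_1}$ and $\delta_{j_2}$ into the shared-degree terms via the total activity bound, yielding $\eta_{j_1} + \eta_{j_2} \leq \kappa_{j_1}^+ + \kappa_{j_2}^+ - \delta_{j_1} - \delta_{j_2} \leq \kappa_1^+ + \kappa_2^+ - \delta_{j_1} - \delta_{j_2}$, where the second inequality uses the assumed ordering of the upper bounds. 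After discarding the nonpositive surplus $-\tfrac{1}{2}(\delta_{j_1} + \delta_{j_2})$ and bounding each remaining $\delta_i$ by $\kappa_i^+ - \kappa_i^-$, the bound reads $\kappa_1^+ + \kappa_2^+ + \tfrac{1}{2}\sum_{i \neq j_1, j_2}(\kappa_i^+ - \kappa_i^-)$; since $i_1, i_2$ minimize $\kappa_i^+ - \kappa_i^-$, swapping the excluded indices from $\{j_1, j_2\}$ to $\{i_1, i_2\}$ can only enlarge the sum, which establishes the claim at the transformation-distance level.

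To upgrade this into a circuit distance bound (and hence, after maximizing over pairs, the circuit diameter bound), I would verify that the move sequence constructed by Lemma~\ref{lem:naive_distance_bound} can be executed without violating any cluster-size constraint. My plan is to apply the sequential moves corresponding to the paths of $D_P$ first, in any order, and then the combined cyclical moves for $D_Y$. Each sequential move changes exactly two cluster sizes by $\pm 1$ in the direction of $|C_i'|$; since excess (resp.\ deficit) vertices appear only as tails (resp.\ heads) of paths in $D_P$, each cluster's size walks monotonically from $|C_i|$ to $|C_i'|$, both of which lie in $[\kappa_i^-, \kappa_i^+]$, and the cyclical phase then preserves all sizes. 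I expect the main conceptual obstacle to be the algebraic bookkeeping that forces the excluded indices in the final $\delta$-sum to match the definition of $i_1, i_2$ rather than the (data-dependent) $j_1, j_2$; the feasibility verification afterward is a routine monotonicity check.
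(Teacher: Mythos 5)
Your argument is correct and follows essentially the same route as the paper: apply Lemma~\ref{lem:naive_distance_bound}, invoke the per-cluster inequality $\eta_i + \delta_i \leq \kappa_i^+$ (which the paper phrases as ``$\eta_i+\delta_i$ equals the maximum of the indegree and outdegree of $c_i$''), discard the nonpositive leftover and pass from the data-dependent indices $j_1, j_2$ to the extremal $i_1, i_2$ and the size bounds $\kappa_i^{\pm}$. Your feasibility check (paths first with monotone cluster sizes, then size-preserving cyclical moves) is a slightly more explicit version of the paper's one-line assertion that all intermediate clusterings satisfy the bounds, and is valid.
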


\begin{proof}
Let $\mathcal{C}, \mathcal{C}'$ be $k$-clusterings corresponding to vertices $y, y'$ of $\PP$. We can transform $\mathcal{C}$ into $\mathcal{C}'$ by separately applying sequential moves followed by cyclical double-moves in the manner of Lemma~\ref{lem:naive_distance_bound}. All intermediate clusterings in this transformation satisfy the cluster size bounds of $\PP$, so the process indeed corresponds to a circuit walk from $y$ to $y'$ in $\PP$. 

Let $\eta_i$ denote the shared degree of vertex $c_i$ in $CDG(C ,C')$, and let $\delta_i := |(|C_i| - |C'_i|)|$. Lemma~\ref{lem:naive_distance_bound} then implies that the circuit distance from $y$ to $y'$ in $\PP$ is at most 
\begin{align}
\eta_{j_1} + \eta_{j_2} + \frac{1}{2}\sum_{i=1}^k \delta_i,
\end{align} 
where $j_1, j_2$ maximize $\eta_i$ over all $i = 1,...,k$. Trivially, for $i=1,...k$, it holds that $\eta_i \leq \kappa_i^+$ and $\delta_i \leq \kappa_i^+ - \kappa_i^-$. Hence, we obtain the following upper bound on the circuit diameter of $\PP$ as a natural implication of Lemma~\ref{lem:naive_distance_bound}:
\begin{align*}
\kappa^+_{j_1} + \kappa^+_{j_2} + \frac{1}{2}\sum_{i=1}^k (\kappa_i^+ - \kappa_i^-).
\end{align*}

Note however that this bound can be immediately improved. For $i=1,...,k$, we must have $\eta_i + \delta_i \leq \kappa^+_i$ since $\eta_i + \delta_i$ is equal to the maximum of the indegree and outdegree of $c_i$. Rearranging this inequality yields $\eta_i + \frac{1}{2}\delta_i \leq \kappa_i^+ - \frac{1}{2}\delta_i$. Substituting into (1), we obtain the following upper bound on the circuit distance from $y$ to $y'$: 
\begin{align*}
\eta_{j_1} + \eta_{j_2} + \frac{1}{2}\sum_{i=1}^k \delta_i &= \sum_{i = j_1, j_2} \left(\eta_{i} + \frac{1}{2}\delta_i\right) + \frac{1}{2} \sum_{i \neq j_1, j_2}^k \delta_i \\   
&\leq \sum_{i = j_1, j_2} \left(\kappa^+_i - \frac{1}{2}\delta_i \right) + \frac{1}{2} \sum_{i \neq j_1, j_2}^k \delta_i.
\end{align*}
Note that $\sum_{i = j_1, j_2} \left(\kappa^+_i - \frac{1}{2}\delta_i \right) \leq \kappa^+_{j_1} + \kappa^+_{j_2} \leq \kappa^+_{1} + \kappa^+_{2}$. Similarly, it holds that $\frac{1}{2} \sum_{i \neq j_1, j_2}^k \delta_i \leq \frac{1}{2} \sum_{i \neq j_1, j_2}^k (\kappa_i^+ - \kappa_i^-) \leq \frac{1}{2}\sum_{i \neq i_1, i_2}^k (\kappa_i^+ - \kappa_i^-)$. Thus, we obtain the stated bound. \eoproof
\end{proof}

As in Theorem~\ref{thm:improved_bound}, we can significantly improve upon this diameter bound by using the double-moves from Theorem~\ref{thm:two_exchanges} to integrate sequential moves with sets of disjoint cyclical moves. Note that we must take care when applying these double-moves to bounded-size clusterings -- certain moves require the existence of a vertex whose cluster size can be temporarily increased as demonstrated in Figure~\ref{fig:size_increase}. Nevertheless, as long as there is at least some slack in the constraints for all but at most one cluster, we can ensure the existence of such a vertex through a simple pre-processing of the clusters. Hence, we obtain the following improved diameter bound as an implication of the transformation distance bound from Theorem~\ref{thm:improved_bound}, which depends on the maximum of the second-largest cluster size and the largest possible change in cluster sizes. We note that the cluster size slackness assumptions made in this theorem are quite natural in any application involving bounded-size clusterings.

\begin{theorem}\label{thm:improved_diameter}
For a bounded-size partition polytope $BPP(\kappa^+, \kappa^-)$, assume the corresponding clusters are indexed so that $\kappa_1^+ \geq \cdots \geq \kappa_k^+$ and let $i_1$ denote the index minimizing $\kappa_i^+ - \kappa_i^-$. If $\sum_{i=1}^k \kappa_i^+ > n + k - 2$ and if $\kappa_i^+  > \kappa_i^-$ for $i \neq i_1$, the circuit diameter of $\PP$ is at most
\begin{align*}
\kappa^+_{1} +  \max \left\lbrace \kappa^+_{2}, \ \frac{1}{2}\sum_{i \neq i_1}^k (\kappa_i^+ - \kappa_i^-) \right\rbrace + 2(k-2).
\end{align*}
\end{theorem}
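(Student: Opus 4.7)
My plan is to apply the transformation distance bound of Theorem~\ref{thm:improved_bound}, translate it into the language of cluster size bounds via the inequality $\eta_i + \delta_i \leq \kappa_i^+$, and then add an overhead of $2(k-2)$ moves to accommodate intermediate violations of the cluster size constraints.

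Given two vertices $y, y'$ of $\PP$ corresponding to clusterings $\C, \C'$, I would form $D = \CDG$ and invoke Theorem~\ref{thm:improved_bound} to obtain
\[
d(\C, \C') \leq \eta_{j_1} + \max\!\left\{\eta_{j_2},\; \tfrac{1}{2}\sum_{i=1}^k \delta_i\right\},
\]
with $j_1, j_2$ indexing the vertices of largest shared degree in $D$. Then, exactly as in the proof of Lemma~\ref{lem:naive_diameter}, I would use the inequality $\eta_i + \delta_i \leq \kappa_i^+$ (valid because $\eta_i + \delta_i$ equals $\max\{d^+(c_i), d^-(c_i)\} \leq \max\{|C_i|, |C'_i|\} \leq \kappa_i^+$) to bound the right-hand side. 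If $\eta_{j_2}$ dominates, then $\eta_{j_1} + \eta_{j_2} \leq \kappa_{j_1}^+ + \kappa_{j_2}^+ \leq \kappa_1^+ + \kappa_2^+$. Otherwise, I would rewrite $\eta_{j_1} + \tfrac{1}{2}\sum_i \delta_i = (\eta_{j_1} + \tfrac{1}{2}\delta_{j_1}) + \tfrac{1}{2}\sum_{i \neq j_1}\delta_i$, bound the first summand by $\kappa_{j_1}^+ - \tfrac{1}{2}\delta_{j_1} \leq \kappa_1^+$, and observe that $\sum_{i \neq j_1}\delta_i \leq \sum_{i \neq j_1}(\kappa_i^+ - \kappa_i^-) \leq \sum_{i \neq i_1}(\kappa_i^+ - \kappa_i^-)$, the last step using that $i_1$ minimizes $\kappa_i^+ - \kappa_i^-$ so dropping the $i_1$-term leaves the larger sum. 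Together this gives a transformation distance bound of $\kappa_1^+ + \max\{\kappa_2^+,\; \tfrac{1}{2}\sum_{i \neq i_1}(\kappa_i^+ - \kappa_i^-)\}$.

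The remaining step is to lift this transformation distance bound into a circuit distance bound in $\PP$. As noted after the definition of transformation distance, the two quantities agree only when no intermediate clustering violates the size constraints; the example in Figure~\ref{fig:size_increase} shows that certain double-moves from Theorem~\ref{thm:two_exchanges} and Lemma~\ref{lem:one_cycle_intersect} necessarily inflate some cluster's size by one during an intermediate step. The hypothesis $\sum_{i=1}^k \kappa_i^+ > n + k - 2$ guarantees that the total slack $\sum_i (\kappa_i^+ - |C_i|)$ is at least $k-1$ at every feasible clustering, and the hypothesis $\kappa_i^+ > \kappa_i^-$ for $i \neq i_1$ ensures that all but the single cluster $c_{i_1}$ has nontrivial room to absorb a temporary inflation. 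I would then argue that whenever a double-move in the schedule from the proof of Theorem~\ref{thm:improved_bound} would push some cluster above its upper bound, the offending item can be rerouted through a currently slack cluster at the cost of a single two-move detour, and that at most $k-2$ distinct clusters (those other than $c_1$, which we already pay for in the $\kappa_1^+$ term, and $c_{i_1}$, whose size remains fixed) can ever require such a reroute during the entire walk, giving an overhead of at most $2(k-2)$ moves.

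The hard part will be the last step: rigorously cataloguing which of the double-moves from the cases of Theorem~\ref{thm:two_exchanges} and from the proof of Theorem~\ref{thm:improved_bound} actually admit feasibility violations, showing that the slackness hypotheses always provide a usable destination cluster for the detour, and accounting for the reroutes carefully enough to obtain the tight count of $2(k-2)$ rather than something proportional to the number of double-moves performed. Verifying that a single detour per obstructing cluster suffices, rather than one per obstructing move, is the crux of the argument.
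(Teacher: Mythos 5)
Your high-level structure matches the paper's: apply Theorem~\ref{thm:improved_bound}, then translate the shared-degree bound into the cluster-size vocabulary via $\eta_i + \delta_i \leq \kappa_i^+$, exactly as in the proof of Lemma~\ref{lem:naive_diameter}. That translation is carried out correctly. The gap is in the mechanism you propose for the $2(k-2)$ overhead, which you yourself flag as ``the crux of the argument'' and leave unresolved.

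The paper does not perform runtime repair of feasibility violations as they arise. It instead \emph{pre-processes} both $\mathcal{C}$ and $\mathcal{C}'$ before the main walk: whenever more than one cluster is at its upper bound, pick such a cluster $C_j$ with $\kappa_j^+ > \kappa_j^-$ (this is where the hypothesis on $i_1$ is used) and move one of its items to a cluster $C_\ell$ with $|C_\ell| < \kappa_\ell^+ - 1$; the hypothesis $\sum_i \kappa_i^+ > n + k - 2$ guarantees such an $\ell$ exists by a short counting argument. Each such transfer is one circuit step, and since at most $k-1$ clusters can be at their upper bound initially (again by the slack hypothesis), at most $k-2$ transfers bring each endpoint to a state with at most one saturated cluster. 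Doing this for \emph{both} $\mathcal{C}$ and $\mathcal{C}'$ yields the factor $2(k-2)$. Only then is Theorem~\ref{thm:improved_bound} applied between the two pre-processed clusterings; with at most one saturated cluster, every cycle in every relevant $CDG$ covers at least two vertices, so a valid $u_s$ for the Case-4 double-move always exists, and no rerouting is ever needed during the main walk.

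By contrast, your proposal must show that at most $k-2$ distinct clusters can ever be obstructing, and that a single two-move detour per such cluster suffices across the \emph{entire} walk. You do not have an argument for this, and it is not clear one exists: a cluster's size fluctuates during the walk and could in principle re-saturate after having been repaired, so the accounting ``one detour per cluster'' is not automatic. The paper's pre-processing avoids this entirely because the detours are charged to the endpoints rather than interleaved with the main walk, and the ``at most $k-2$'' count is a clean consequence of how many clusters can be saturated at a single feasible vertex of $\PP$. So the factor of $2$ in $2(k-2)$ is structural — one preprocessing pass per endpoint — not the ``cost of a single two-move detour'' per obstruction as you suggest. To repair your proposal, replace the runtime-repair idea with this two-sided pre-processing argument.
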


\begin{proof}
Let $\mathcal{C}, \mathcal{C}'$ be $k$-clusterings corresponding to vertices $y, y'$ of $\PP$. We can transform $\mathcal{C}$ into $\mathcal{C}'$ in the manner of Theorem~\ref{thm:improved_bound}. However, in order for all intermediate clusterings to satisfy the bounds of $\PP$, we must make sure that when applying any version of the double-move from Case 4 of Theorem~\ref{thm:two_exchanges}, there exists a suitable choice for $u_s$ whose corresponding cluster size is strictly less than its upper bound and can be temporarily increased. 

To ensure that this is always the case, we pre-process $\mathcal{C}$ and $\mathcal{C}'$ in the following manner. If there exists more than one cluster $C_i$ (or $C'_i$ in the case of $\mathcal{C}'$) such that $|C_i| = \kappa^+_i$, then choose such an index $j$ with $|C_j| = \kappa^+_j > \kappa^-_j$, which is possible since at most one index $i$ satisfies $\kappa^+_i = \kappa^-_i$. Transfer any item from $C_j$ to a different cluster $C_\ell$ which satisfies $|C_\ell| < \kappa^+_\ell - 1$. Such an index $\ell$ must exist, else we would have
\begin{align*}
\sum_{i=1}^k |C_i| \geq 2 + \sum_{i=1}^k (\kappa^+_i - 1) > 2 + (n + k - 2) - k = n.
\end{align*}
Repeat this process at most $k-2$ times until sizes of all clusters but at most one are strictly less than their upper bounds. In the resulting $k$-clustering, it is then always possible to choose $u_s$ in Case 4 of Theorem~\ref{thm:two_exchanges} such that the corresponding cluster size can be temporarily increased when performing the double-move: any cycle in a clustering-difference graph covers at least two vertices, and at least one of these vertices must have a corresponding cluster size less than its upper bound. Additionally, the choice of the vertex $u_s$ is not affected by the modifications in Case 4 of Theorem~\ref{thm:improved_bound}. 

Pre-processing both $\mathcal{C}$ and $\C'$ in this manner requires at most $2(k-2)$ single item transfers. Once these transfers have been applied, we may perform a circuit walk between the resulting $k$-clusterings by applying the moves of Theorem~\ref{thm:improved_bound} to their clustering-difference graph. Such a circuit walk has length at most
\begin{align*}
\eta_{j_1} +  \max \left\lbrace  \eta_{j_2}, \  \frac{1}{2}\sum_{i=1}^k  \delta_i   \right\rbrace ,
\end{align*}
where $j_1, j_2$ denote the two indices maximizing the shared degree $\eta_i$.

As in the proof of Lemma~\ref{lem:naive_diameter}, since $\eta_{j_1} + \frac{1}{2} \delta_{j_1} \leq \kappa_{j_1}^+ - \frac{1}{2}\delta_{j_1}$, this bound is at most
\begin{align*}
\max \left\lbrace  \eta_{j_1} + \eta_{j_2}, \ \left(\eta_{j_1} + \frac{1}{2} \delta_{j_1} \right) + \frac{1}{2}\sum_{i \neq j_1}^k  \delta_i   \right\rbrace &\leq 
\max \left\lbrace \eta_{j_1} + \eta_{j_2}, \  \left(\kappa^+_{j_1} - \frac{1}{2}\delta_{j_1}\right) + \frac{1}{2}\sum_{i \neq j_1}^k \delta_i \right \rbrace  \\
&\leq \max \left\lbrace \kappa^+_{j_1} + \kappa^+_{j_2}, \  \kappa^+_{j_1} + \frac{1}{2}\sum_{i \neq j_1}^k (\kappa_i^+ - \kappa_i^-) \right \rbrace \\
&\leq \kappa^+_1   +  \max \left\lbrace  \kappa^+_2, \ \frac{1}{2}\sum_{i \neq i_1}^k (\kappa_i^+ - \kappa_i^-) \right\rbrace.
\end{align*}
Taking into account the at most $2(k-2)$ circuit steps needed to adjust the cluster sizes, we obtain the stated improved bound. \eoproof
\end{proof}

\section{Conclusions and Future Directions}\label{sec:future_directions}

In this work, we provide methods based on linear programming and network theory for transforming $k$-clusterings using sequences of cyclical and sequential moves of items among clusters. This leads to upper bounds on the transformation distance between two general $k$-clusterings as well as the circuit diameter of the bounded-size partition polytope. There are several natural directions for future research in this area. 

We prove in Theorem~\ref{thm:improved_diameter} an upper bound on the circuit diameter of the bounded-size partition polytope using the transformation distance bound from Theorem~\ref{thm:improved_bound} and modified double-moves from Theorem~\ref{thm:two_exchanges} which integrate sequential moves of items with cyclical moves. A subsequent research question is whether or not we can also bound the combinatorial diameter of the polytope in such a manner. The edges of $BPP$ have a more technical characterization than its circuits -- only certain cyclical and sequential moves actually correspond to edges between vertices \cite{bv-17}. However, through a careful ordering of cyclical and sequential moves and double-moves, we believe new bounds on the combinatorial distance between vertices in the polytope could be achievable. 

Additionally, in Theorem~\ref{thm:improved_bound}, we use an \textit{arbitrary} path-cycle decomposition $(D_P, D_Y)$ of the clustering-difference graph $D := \CDG$ to bound the transformation distance between the clusterings. It is possible to instead construct a decomposition exhibiting potentially useful properties. For instance, solving a minimum-cost circulation problem over $D$ yields a decomposition in which $D_Y$ has a maximum number of edges. Modifying this circulation problem can yield a decomposition in which the maximum shared degree in $D_P$ is minimized. Through further analysis, these extremal choices for the path-cycle decomposition might lead to better upper bounds on the transformation distance.

Finally, we note that the transformation distance $d(\C, \C')$ is formally a metric. Hence, if we are able to compute $d(\C, \C')$, we can interpret it as a measure of the distance between given $k$-clusterings of the same data set. There is significant interest in comparing clusterings in the literature \cite{g-17,m-07}. However, most measures typically do not take into account the potential labels of the clusters and are instead based on pairwise relationships among the items. Our new metric takes a fundamentally different approach to measuring the difference between clusterings, motivating a comparative study.

\bibliography{literature}
\bibliographystyle{plain}

\end{document}